\theoremstyle{definition}
\newtheorem{definition}{Definition}
\newtheorem{claim}[definition]{Claim}
\theoremstyle{plain}
\newtheorem{theorem}[definition]{Theorem}
\newtheorem{proposition}[definition]{Proposition}
\newtheorem{lemma}[definition]{Lemma}
\newtheorem{corollary}[definition]{Corollary}
\newtheorem{remark}[definition]{Remark}
\newtheorem{convention}[definition]{Convention}
\newcommand{\fwc}{\mathcal{F}_{WC}}
\newcommand{\T}{\mathcal{T}}
\newcommand{\EF}{\mbox{EF}}
\newcommand{\MA}{\mbox{MA}}
\newcommand{\height}{\mbox{ht}}
\def\rest{\restriction}
\newcommand{\name}{\dot}
\newcommand{\la}{\langle}
\newcommand{\ra}{\rangle}
\newcommand{\elem}{\prec}
\newcommand{\uhr}{\restriction}
\newcommand{\ol}{\ol}
\newcommand{\po}{\mathbb{P}}
\newcommand{\qo}{\mathbb{Q}}
\newcommand{\cC}{\mathcal{C}}
\DeclareMathOperator{\dom}{dom}
\DeclareMathOperator{\supp}{supp}
\DeclareMathOperator{\cof}{Cof}
\DeclareMathOperator{\col}{Coll}
\DeclareMathOperator{\rng}{rng}
\DeclareMathOperator{\Ord}{Ord}
\DeclareMathOperator{\Lev}{Lev}
\newcommand{\coll}{\col(\omega_1,<\kappa)}
\title{Aronszajn trees and maximality\thanks{This project has received funding from the European Research Council (ERC) under the
European Union’s Horizon 2020 research and innovation programme (grant agreement No
101020762), as well as from the Research Center at the Einstein Institute of Mathematics.
The first and third authors would like to thank the Israel Science Foundation (Grant  1302/23) for their support.
The second author was supported by Magnus Ehrnrooth Foundation.
The fourth author was supported by the Academy of Finland grants number 322795 and 368671.}}
\author{Omer Ben-Neria
\\ 
Hebrew University\\ Jerusalem, Israel
\and
Siiri Kivim\"aki
\\ Universit\'e Paris Cit\'e,
\\
University of Helsinki 
\and 
Menachem Magidor
\\ Hebrew University\\ Jerusalem, Israel
\and
Jouko V\"a\"an\"anen
\\ University of Helsinki \\ Helsinki, Finland}
\date{October 2025}
\begin{document}

\maketitle

\begin{abstract}
Assuming the consistency of a weakly compact cardinal above a regular uncountable cardinal $\mu$, we prove the consistency of the existence of a wide $\mu^+$-Aronszajn tree, i.e. a tree of height and cardinality $\mu^+$ with no branches of length $\mu^+$, into which every wide $\mu^+$-Aronszajn tree can be embedded.
\end{abstract}

\smallskip

\noindent Keywords: Aronszajn tree, forcing, proper forcing, weak compactness. 

\noindent MSC classification: 03E35, 03E40, 03E05, 03E55.
\smallskip

\section{Introduction}\label{Section:Introduction}

The topic of this paper is maximality among wide $\kappa$-Aronszajn trees, i.e. trees of cardinality and height $\kappa$ without branches of length $\kappa$. Such a tree is called maximal if every such tree can be embedded into it. We show the consistency of maximal trees relative to the consistency of a weakly compact cardinal. This has been an open problem for 30 years.

Trees in this paper are partial orders in which the set of predecessors of every element are well-ordered by the partial order, and there is a unique smallest element. The order-type of the set of predecessors of an element of a tree is called the \emph{height} of the element, and the supremum of all heights in a tree is called the \emph{height} of the tree. The set of elements of a fixed height is called a \emph{level} of the tree. There is a natural quasi-ordering of the class of all trees: a tree $T$ is below a tree $T'$ if $T$ can be monomorphically embedded into $T'$. For any class $\cC$ of trees it is natural to ask if $\cC$ has a maximal element $T$ under embeddability. Then, up to isomorphism, the class $\cC$ consists just of subtrees of $T$ in $\cC$.

Our focus is on trees of cardinality and height $\kappa\ge\omega_1$ with no branches, i.e. linearly ordered subsets, of size $\kappa$. Following \cite{Sh:1186}, we call such trees \emph{wide $\kappa$-Aronszajn trees}. Our main result (Theorem~1 below) is that it is consistent, relative to the consistency of a weakly compact cardinal, to have a maximal wide $\kappa$-Aronszajn tree. Our proof
works for any successor of a regular cardinal $>\aleph_0$. This result complements the fact that it is a consequence of the Generalized Continuum Hypothesis that there are no maximal wide $\kappa^+$-Aronszajn trees  for any infinite regular $\kappa$ \cite{MR1071304}.
Under the stronger assumption  $V=L$, no wide $\kappa^+$-Aronszajn tree is maximal even just for $\kappa^+$-Souslin trees, for 
it can be proved, improving a result in \cite{MR1712000}, that, assuming $V=L$, for every $\kappa^+$-Aronszajn tree $T$, $\kappa$ regular, there is a $\kappa^+$-Souslin tree which is not  embeddable into $T$. 

If the levels of a wide $\kappa$-Aronszajn tree are of cardinality $<\kappa$, we drop ``wide" and call such trees just $\kappa$-Aronszajn, or, to be more specific, \emph{narrow} $\kappa$-Aronszajn. Furthermore, if $\kappa=\omega_1$, we call the trees Aronszajn, or wide Aronszajn, respectively. For some $\kappa$ there may be no $\kappa$-Aronszajn trees, and then $\kappa$ is said to have the \emph{tree property}. By K\H{o}nig's Lemma, $\omega$ has the tree property. No singular cardinal has the tree property for trivial reasons. An inaccessible cardinal has the tree property if and only if it is weakly compact. 

Examples of Aronszajn trees are so-called Souslin trees, which are instrumental in understanding, and proving the independence of, the so-called Souslin Hypothesis i.e. the hypothesis that the order-type of the real numbers is the unique, up to isomorphism, dense complete linear order without end-points in which all families of disjoint non-empty open sets are countable. 

Wide $\kappa$-Aronszajn trees are important  in the study of model theoretic properties of uncountable structures, namely, trees can be used as a weak substitute for ordinals when  uncountable models are investigated by means of transfinite games \cite{MR1071304} and, more generally, in the study of generalized Baire spaces \cite{MR1242054}. For example, the existence of a particular kind of maximal tree, a so-called Canary Tree (see below), is equivalent, assuming CH, to the isomorphism class of the free Abelian group of cardinality $\aleph_1$ being $\Delta^1_1$ in the generalized Baire Space $\omega_1^{\omega_1}$. This emphasises the importance of understanding better the global ordering of trees, especially the existence of maximal trees. 

There is no maximal countable ordinal, but if we identify ordinals with trees without infinite branches, and consider generalized ordinals i.e. the class $\T_\alpha$ of trees of cardinality $\aleph_\alpha$ without branches of length $\aleph_\alpha$, the situation is more opaque. When $\alpha>0$, the structure of the class of such trees is  much more complicated than the structure of ordinals. For example, the structure of $\T_1$ is highly non-linear as it is easy to construct pairs of wide (or narrow) Aronszajn trees so that neither can be mapped even by a strict order preserving homomorphism to the  other. Furthermore, the structure is highly non-absolute. Several partial results are known about $\T_\alpha$, $\alpha>0$, \cite{MR1242054,MR1222536,MR1712000,MR2329763,Sh:1186}.
\medskip

The main result of this paper is:

\begin{theorem}\label{Theorem:main}Suppose that $\kappa$ is a weakly compact cardinal and $\mu < \kappa$ is regular uncountable. There is a (set) forcing extension of the universe in which $\kappa = \mu^+$ and there is a maximal wide $\kappa$-Aronszajn tree i.e. a wide $\kappa$-Aronszajn tree $T$ such that any other wide $\kappa$-Aronszajn tree can be monomorphically embedded into $T$. \end{theorem}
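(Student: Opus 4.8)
\medskip

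\noindent\emph{Proof outline.} The plan is to start in $V$ with $\kappa$ weakly compact and $\mu<\kappa$ regular uncountable, to arrange GCH first (forcing it if necessary by a standard iteration preserving the weak compactness of $\kappa$), and then to force with a two-step poset $\mathbb{P} = \mathrm{Coll}(\mu,{<}\kappa) * \dot{\mathbb{E}}$. The first factor makes $\kappa = \mu^{+}$ while keeping $2^{\mu}=\mu^{+}$ and $2^{\mu^{+}}=\mu^{++}$. Working in the intermediate model, I would fix a single wide $\kappa$-Aronszajn tree $T^{*}$, generically added by an auxiliary ${<}\mu$-closed, $\kappa$-cc poset designed both to guarantee that $T^{*}$ is branchless and to endow $T^{*}$ with strong amalgamation and homogeneity properties. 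The iteration $\dot{\mathbb{E}}$ then has length $\mu^{++}$, uses supports of size ${<}\mu$, and is guided by a bookkeeping enumeration of all names for candidate wide $\kappa$-Aronszajn trees: at stage $\alpha$ it forces with the poset $\mathbb{E}(\dot S_{\alpha},T^{*})$ whose conditions are partial monomorphic embeddings of the bookkept tree $\dot S_{\alpha}$ into $T^{*}$ with domain of size ${<}\mu$, ordered by reverse inclusion. In $V^{\mathbb{P}}$ the tree $T^{*}$ is to be the required maximal wide $\kappa$-Aronszajn tree.

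First I would dispose of the routine preservation facts. Each $\mathbb{E}(\dot S_{\alpha},T^{*})$ is ${<}\mu$-closed (size-${<}\mu$ conditions, $\mu$ regular), so the ${<}\mu$-support iteration $\dot{\mathbb{E}}$ is ${<}\mu$-closed over the intermediate model, adds no new ${<}\mu$-sequences, and preserves all cardinals $\le\mu$ as well as $\operatorname{cf}(\mu)$; a $\Delta$-system argument using $\mu^{<\mu}=\mu$ shows $\dot{\mathbb{E}}$ is $\mu^{++}$-cc, so cardinals $\ge\mu^{++}$ are preserved, and, since $\operatorname{cf}(\mu^{++})=\mu^{++}>\mu^{+}$, every $\mathbb{P}$-name for a subset of $\kappa$ has bounded support in $\mu^{++}$; with the bookkeeping this gives that every wide $\kappa$-Aronszajn tree of $V^{\mathbb{P}}$ is a realization of some $\dot S_{\alpha}$, $\alpha<\mu^{++}$. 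The density arguments that the stage-$\alpha$ generic really yields a \emph{total} monomorphic embedding $S_{\alpha}\to T^{*}$ — the relevant dense sets being ``extend the domain to cover a prescribed node of $S_{\alpha}$'' — go through precisely because of the amalgamation and homogeneity built into $T^{*}$; and $T^{*}$ plainly retains height and cardinality $\kappa$.

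The heart of the matter, and the only place where the weak compactness of $\kappa$ is genuinely used, is to establish simultaneously that (i) $\mathbb{P}$ does not collapse $\kappa$, so that $\kappa=\mu^{+}$ in $V^{\mathbb{P}}$, and (ii) $\mathbb{P}$ adds no cofinal branch to $T^{*}$. Point (i) is not a chain-condition fact: $\mathbb{E}(S,T^{*})$ is badly non-$\kappa$-cc, since two partial embeddings that agree on a $\Delta$-system root may be incompatible because of ``cross relations'' $s<_{S}s'$ between their remaining domains. Instead I would analyse $\mathrm{Coll}(\mu,{<}\kappa)*\dot{\mathbb{E}}$ — or, for (ii), a restriction of it to a support set of size $\le\kappa$, into which a putative branch-name is compressed using the $\mu^{++}$-cc and the ${<}\mu$-supports — inside a transitive model $M$ of size $\kappa$ with ${}^{<\kappa}M\subseteq M$ and $\kappa\in M$, take a weakly compact embedding $j\colon M\to N$ with critical point $\kappa$, factor $j\bigl(\mathrm{Coll}(\mu,{<}\kappa)*\dot{\mathbb{E}}\bigr)=\bigl(\mathrm{Coll}(\mu,{<}\kappa)*\dot{\mathbb{E}}\bigr)*\dot{\mathbb{R}}_{\mathrm{tail}}$ with $\dot{\mathbb{R}}_{\mathrm{tail}}$ sufficiently closed in the relevant extension of $N$, and lift $j$ by a master condition assembled from the ${<}\kappa$-closure of $M$. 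The lift shows, on the one hand, that every $\mathbb{P}$-name for a function $\kappa\to\kappa$ is approximated inside $N$, which yields (i); and, on the other hand, that a cofinal branch of $T^{*}$ in $V^{\mathbb{P}}$ would be pinned by a node of $j(T^{*})$ at level $\kappa$ lying already in $N$, and would therefore reflect to an actual cofinal branch of $T^{*}$ in a model in which $T^{*}$, by the design of the auxiliary poset, has none — which yields (ii). Making $T^{*}$, the factorization, and the master-condition lift cohere — in particular checking that $\dot{\mathbb{R}}_{\mathrm{tail}}$ is closed enough, that the lifted generic is genuinely $N$-generic, and that the designs of $T^{*}$ and of $\mathbb{E}(S,T^{*})$ meet all the demands of this reflection — is where essentially all the work lies, and this is the step I expect to be the main obstacle.

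Granting (i) and (ii), the theorem follows: in $V^{\mathbb{P}}$ we have $\kappa=\mu^{+}$, and $T^{*}$ has height and cardinality $\kappa$ with no cofinal branch, hence is a wide $\kappa$-Aronszajn tree; moreover any wide $\kappa$-Aronszajn tree $S$ of $V^{\mathbb{P}}$ equals $(\dot S_{\alpha})^{G}$ for some $\alpha<\mu^{++}$, so at the next stage $\beta\ge\alpha$ at which the bookkeeping addresses it the stage-$\beta$ generic provides a monomorphic embedding $S\hookrightarrow T^{*}$. Thus $T^{*}$ is maximal, which proves Theorem~\ref{Theorem:main}.
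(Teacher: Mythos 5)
The key divergence is in how you handle the two central obligations, and I think there is a genuine gap in both.

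\textbf{Preservation of $\kappa$.} You correctly note that the iteration $\dot{\mathbb{E}}$ of partial-embedding posets is not $\kappa$-cc, and propose to recover preservation via a weakly-compact-embedding $j\colon M\to N$ and a master-condition lift. But the lift is not available for the forcing you describe: $\dot{\mathbb{E}}$ is merely ${<}\mu$-closed (i.e.\ $\sigma$-closed when $\mu=\omega_1$), so after forcing with a piece of size $\kappa$ you obtain a generic $G$ of size $\kappa$, and to find a lower bound of $j``G$ inside $j(\mathbb{P})$ you would need something like $\kappa^+$-strategic closure of the tail, which an iteration with conditions of size ${<}\mu$ simply does not have. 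The paper does not attempt a master-condition lift at all. Instead, it builds the preservation mechanism \emph{into the conditions}: each condition $p=\langle f^p,N^p\rangle$ carries a side-condition part $N^p$ recording structures $M^\delta_\alpha$ of size $\mu$ with $\Pi^1_1$-reflection (Definition~\ref{Def:ReflectingSequenceofModels}), together with carefully calibrated promises (Inductive Assumptions V--VII) about how the working parts $f^p_\delta$ interact with these structures. Weak compactness is used to guarantee that the class of reflecting structures is large, and the main work (Section~\ref{Section:StrongProperness}) is a delicate argument that the resulting poset is \emph{strongly proper} with respect to such structures, which gives $\kappa^+$-cc and thereby preservation of $\kappa$. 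Your plan has no side conditions at all, and without them the naive iteration of partial-embedding posets is, as far as I can tell, not proper and there is no obvious reason it does not collapse $\kappa$.

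\textbf{No new branches.} Your argument that a branch would ``reflect to an actual cofinal branch of $T^{*}$ in a model in which $T^{*}$, by the design of the auxiliary poset, has none'' is circular as stated: the model after the lift contains the (image of the) whole embedding iteration, and whether $T^{*}$ is still Aronszajn there is precisely what is at stake. The paper avoids this by introducing a new device, \emph{pairwise strong properness} (the definition in the introduction and Lemma~\ref{Lem:DoubleStrongProperness} / Lemma~\ref{Lemma:DualPropernessForSequences}): one produces two extensions $p^L,p^R$ with the \emph{same} trace $[p^L]_{M}=[p^R]_{M}$ that force contradictory values $t^L\neq t^R$ for $\dot b(\alpha)$, and then a single $w\in M$ below the common trace, compatible with both, forces a single $\bar t<_T t^L$ and $\bar t<_T t^R$ at a fixed lower level, yielding a contradiction (Theorem~\ref{nocofinal}). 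This is a combinatorial splitting argument internal to the forcing, not a reflection via an elementary embedding.

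\textbf{A further structural difference you gloss over.} The paper's target tree $T$ is not ``a generic wide tree with amalgamation''; it is the union $\bigcup_h T^h$ of $\kappa$-many subtrees indexed by functions $h\colon\kappa\to\kappa$ that pairwise disagree almost everywhere, and each $T^h$ is built level-by-level so that the branches $b_t$ leading up to high-cofinality levels are determined by mutually generic pieces $f^{G(\alpha,\beta,\gamma)}_\delta$ of the Levy collapse, with the quadruple $(\alpha,\beta,\gamma,\delta)$ serving as a ``collapse index.'' The iteration then embeds the $\tau$-th bookkept tree $S_\tau$ specifically into $T^{h_\tau}$. This decoupling --- different input trees target almost-disjoint parts of $T$, and those parts are built from independent collapse coordinates --- is exactly what makes the side-condition amalgamations (Lemmas~\ref{Lemma:FlexibleProjections2}, \ref{Lemma:FlexibleProjections3}, \ref{Lemma:SimpleAmalgamationWorks}) go through. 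Your single, unspecified $T^{*}$ with unspecified ``amalgamation and homogeneity'' does not clearly supply this, and it is not at all routine to verify the density claims (``extend the domain to cover a prescribed node'') without such a precise mechanism; indeed, the paper's node-density proof (first part of Lemma~\ref{Lemma:DensityMainLemma}) is itself a multi-page argument leaning on the collapse-index structure.

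In short: your outline correctly identifies the two obligations and the two-block shape (collapse plus an embedding iteration), but the proposed master-condition lift cannot deliver either of them for this kind of iteration, and the actual proof replaces it with a completely different piece of technology --- side conditions, strong properness, and pairwise strong properness --- supported by a purpose-built construction of the target tree. This is the content of Sections~\ref{Section:BuildingTree}--\ref{Section:NoNewBranches}, and is not a detail to be filled in later; it is the proof.
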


To simplify our presentation, we will prove the theorem for the case $\mu = \omega_1$  (i.e., $\kappa = \omega_2$). It will be apparent throughout this work that modification to an arbitrary regular uncountable cardinal $\mu$ is straightforward. The theorem holds also for $\mu = \omega$,  see \cite{siirialef1}.\\

\noindent 
This theorem further emphasises the difference between the order of trees with no infinite branches and the class of trees  with no branches of length $\kappa>\omega$. 

We shall now define in detail the central concepts of this paper. We have already defined the concept of a wide $\kappa$-Aronszajn tree as well as its special case, the (narrow) $\kappa$-Aronszajn tree, agreeing to drop $\kappa$ if $\kappa=\omega_1$.  While Aronszajn trees always exist, the existence of an $\aleph_2$-Aronszajn tree is independent of ZFC, assuming the consistency of weakly compact cardinals: Specker proved  the existence of an $\aleph_2$-Aronszajn tree from CH \cite{MR39779}. Mitchell and Silver proved the consistency of the non-existence of $\aleph_2$-Aronszajn trees, relative to the consistency of a weakly compact cardinal \cite[Theorem 5.8]{MR313057}. They also showed that if there are no $\aleph_2$-Aronszajn trees, then $\aleph_2$ is weakly compact in $L$. As opposed to the case of (narrow) $\kappa$-Aronszajn trees, it is easy to construct a \emph{wide} $\kappa$-Aronszajn tree in ZFC by bundling together isolated branches of all lengths $<\kappa$. 
 
As discussed already, our main topic in this paper is the existence of trees that are maximal in some specific sense.  There are several ways in which two trees $T$ and $T'$ can be compared to each other in order for the concept of maximality to make sense. Originally the question of maximality was raised \cite{MR1242054} in connection with comparing trees by asking whether there is  a \emph{homomorphism} from one to the other i.e. a mapping from one tree $T$ to another $T'$, that preserves strict ordering:
$$t<_Tt'\Rightarrow f(t)<_{T'}f(t').$$ Note that such a mapping need not be one-one because incomparable elements can be mapped to the same element. We follow \cite{Sh:1186} in calling  such a mapping a \emph{weak embedding}\footnote{There was a  claim in \cite{MR1242054} that a maximal tree exists in the sense of weak embeddings in $\T_{1}$ if Martin's Axiom (MA) and $2^\omega>\omega_1$ were assumed. This claim was proved wrong in \cite{Sh:1186}.}.  The reason for the emergence of weak embeddings as a way to order classes of trees is its close connection to certain games, introduced below.   While proving Theorem~\ref{Theorem:main} the authors realized that they can actually prove the consistency of the existence of a maximal tree under a stronger order, namely the order according to (monomorphic) embeddability. In the end, ordering trees by the existence of an embedding is very  natural. From a general mathematical perspective it can be considered even more natural than ordering by weak embeddings. 

Let us  write $T\le^* T'$ if there is an embedding (i.e. a monomorphism) $T\to T'$. If $T\le^* T'$ and $T'\le^* T$ we write $T\equiv^* T'$. Respectively, if $T\le^* T'$ but $T'\not\le^* T$, we write $T<^*T'$. 
If there is a weak embedding from $T$ to $T'$, we write $T\le T'$. If $T\le T'$ and $T'\le T$ we write $T\equiv T'$. Finally, if $T\le T'$ but $T'\not\le T$, we write $T<T'$. Of course, $T\le^* T'$ implies $T\le T'$. 

If $B_\alpha$ is the tree of descending chains of elements of $\alpha$, ordered by end-extension, then $\alpha\le\beta$ if and only if $B_\alpha\le B_\beta$. Thus in the class of trees without infinite branches the weak embedding order reflects the received ordering of the class of all ordinal numbers.
 Again, we may ask, whether there is a maximal tree under the weaker ordering $\le$ in the class of all Aronszajn trees. If we assume $\MA_{\aleph_1}$, then no wide Aronszajn tree is $\le$-above all Aronszajn trees \cite{Sh:1186,MR2329763}. Similarly, if $V=L$, then for every wide Aronszajn tree $T$ there is a Souslin tree $S$ such that $S\not\le T$  \cite{MR1712000}.

We may now ask in two different senses whether  there is a maximal tree in a given class of trees:

\begin{quote}
    {\bf The  Maximality Question:} Given a class $\cC$ of trees, is there a tree $T$ in $\cC$  such that $S\le^* T$ for every $S\in \cC$?
\end{quote}

\begin{quote}
    {\bf The Weak Maximality Question:} Given a class $\cC$ of trees, is there a tree $T$ in $\cC$  such that $S\le T$ for every $S\in \cC$?
\end{quote}

Trivially, a positive solution to the  Maximality Question gives a positive solution to the Weak Maximality Question.

Both the full and the Weak Maximality Questions are meaningful even if the maximal tree $T$ is not in $\cC$ but satisfies some weaker constraints. 
%
%
%
For example, it is consistent, relative to the consistency of ZF, that CH holds and every Aronszajn-tree is special (Jensen, \cite[Theorem 8.5]{MR1623206}). Thus in this model there is a wide  Aronszajn-tree that is $\le$-above all Aronszajn-trees. However, this tree $T$ is (a priori) not Aronszajn, so we do not obtain a solution to the Weak Maximality Question for the class of Aronszajn trees.
%
%
%
%
 Let us call a wide $\aleph_2$-Aronszajn-tree $T$ \emph{special} if there is $f:T\to\omega_1$ such that $t<t'$ always implies $f(t)\ne f(t')$. Consistency of a weakly compact cardinal implies the consistency of $2^{\aleph_0}=\aleph_1$ $+$  $2^{\aleph_1}>\aleph_2+$  every wide $\aleph_2$-Aronszajn-tree is special \cite{MR603771}.  In this model there is a tree $T$ $\le$-above all $\aleph_2$-Aronszajn-trees such that $T$ has no $\aleph_2$-branches.  Here $|T|>\aleph_2$, so again $T$ is not an answer to the Weak Maximality Question for wide $\aleph_2$-Aronszajn trees.  
 Our Theorem~\ref{Theorem:main} gives a positive solution to the (full) Maximality Question for wide $\kappa$-Aronszajn trees, $\kappa$ a successor of a regular cardinal $>\aleph_0$. As we see below, it is impossible to combine this with GCH.

Both $T\le^* T'$ and $T\le T'$ measure in their own ways how big the trees $T$ and $T'$ are with respect to each other. If $B_{\omega^*}$ denotes the tree consisting of the single branch of length $\omega$, then $B_\alpha\le B_{\omega^*}$ holds for all $\alpha$ but of course $B_\alpha\not\le^* B_{\omega^*}$ when $\alpha>1$. Thus $B_{\omega^*}$ is $\le$-above a proper class of non-$\equiv$-equivalent trees. There can be only $2^{|T|}$ trees $\le^*$-below a given tree $T$, up to $\equiv^*$. This illustrates the different senses in which $\le$ and $\le^*$ measure the bigness of trees.

A still further ordering of Aronszajn trees is the following: If $T$ is an Aronszajn tree and $C\subseteq \omega_1$, then we use $T\restriction C$ to denote the suborder of $T$ consisting of nodes in $T$ the height of which is in $C$. Suppose $T$ and $T'$ are Aronszajn trees. We say that a partial map $\pi:T\to T'$ is an embedding (or an isomorphism) \emph{on a club} if there is a club $C\subseteq\omega_1$ such that $\pi$ is an embedding (or respectively an isomorphism) $T\restriction C\to T'\restriction C$. It follows from the Proper Forcing Axiom that any two Aronszajn trees are isomorphic on a club \cite{MR788070, MR3826541}.

The following  useful operation on trees is due  to Kurepa \cite{MR77612}:
If $T$ is a tree, let $\sigma(T)$ be the tree of ascending chains in $T$, ordered by end-extension. For well-founded trees this is like the successor function on ordinals in the sense that  $\sigma(B_\alpha)\equiv B_{\alpha+1}$. It is easy to see that  if $T$ is any tree, then $T<\sigma(T)$. Moreover, if $T$ has no branches of length $\kappa$, neither has $\sigma(T)$. So from the point of view of lengths of branches $\sigma(T)$ is similar to  $T$. However, it is perfectly possible that $|T[<|\sigma(T)|$. For example, if every node in $T$ splits, then $|\sigma(T)|\ge 2^\omega$. 



The $\sigma$-operation shows that if $\aleph_\alpha^{<\aleph_\alpha}=\aleph_\alpha$, the class $\T_\alpha$ does not have a $\le$-maximal element. So in that case even the Weak Maximality Question has a negative answer for the class $\T_\alpha$. In consequence, $\mu^{<\mu}>\mu$ holds in the final model of our Theorem~\ref{Theorem:main}.


If $A\subseteq\omega_1$ is co-stationary, let $T(A)$ be the tree of closed increasing sequences of elements of $A$. The class of such trees $T(A)$ is an interesting subclass of  trees without uncountable branches.  A tree without uncountable branches which is of cardinality $\le 2^\omega$ and $\le$-above all such $T(A)$ is known as a \emph{Canary Tree}. The existence of Canary Trees is independent of $ZFC+GCH$ \cite{MR1222536,MR1877015}. Assuming CH, a Canary Tree is perhaps not maximal in the entire class of trees in $\T_1$     but it still $\le$-majorises the large class of trees of the form $T(A)$.

\subsection*{Trees as game clocks}

We already alluded to the fact that, assuming CH,  Canary Trees can be used to show that the isomorphism class of a particular structure, in this case the free Abelian group of cardinality $\aleph_1$, is $\Delta^1_1$ in the generalized Baire space $\omega_1^{\omega_1}$. This is an example of the use of trees as clocks in games in the way we now describe. A maximal tree would represent  a kind of universal clock. To see what this means, suppose $\delta$ is an ordinal and $G$ is a  game of length $\delta$ between $I$ and $II$ in which $I$ and $II$ produce a $\delta$-sequence of elements of a fixed set $M$, alternating moves, $I$ starting each round. We fix a set $W\subseteq M^\delta$  and say that $II$ wins if the sequence played is in $W$. Otherwise $I$ wins. We assume the game is closed in the sense that if $s\notin W$ then there is an initial segment $s'$ of $s$ such that no extension of $s'$ is in $W$. If $T$ is any tree (of height $\delta$), we can define a new game $G_T$, a kind of approximation of $G$, as follows. Every time $I$ moves in $G$ he also picks a node $t$ in $T$ in such a way that $t$ is above all nodes he has picked during previous rounds of $G$. If he cannot pick such a $t$ then he loses. Otherwise the game is played as $G$. Clearly, if $I$ has a winning strategy in $G_T$, he has also in $G$. The role of $T$ in $G_T$ is to make it harder for $I$ to win. If $T$ is well-founded, player $I$ can only win $G_T$ if he can win $G$ in finitely many moves but he does not have to tell in advance how many moves he needs in order to win. If $T$ has no branches of length $\delta$, player $I$ can only win $G_T$ if he can win $G$ in $<\delta$ moves but, again, he does not have to tell in advance how long $\delta'$-sequence, $\delta'<\delta$, of moves he needs in order to win. He can change his mind about this during the game. 

The following implications are immediate:

\begin{enumerate}
\item If $II$ has a winning strategy in $G_{T'}$ and $T\le T'$, then $II$ has a winning strategy in $G_{T}$. 
\item If $I$ has a winning strategy in $G_T$ and $T\le T'$, then $I$ has a winning strategy in $G_{T'}$. 
\item If $II$ has a winning strategy in $G_T$ and $I$ has a winning strategy in $G_{T'}$, then $T< T'$. 
\end{enumerate}

These implications emphasise the role of maximal trees for the games $G_T$. Let us see how this works, first on a general level and then more specifically.  Let $\cC_G$ be the class of trees $T$  such that II has a winning strategy in $G_T$. If $II$ has a winning strategy even in the non-approximated game $G$, the class $\cC_G$ is simply the class of all trees. The other extreme is that $\cC_G=\emptyset$, which happens if $W=\emptyset$. Suppose $S$ is $\le$-above all trees in $\cC_G$. Then $\sigma(S)\notin\cC_G$. So, maximality of the tree gives us negative information about winning strategies of $II$. Let $\cC'_G$ be the possibly bigger class of trees $T$  such that $I$ does not have a winning strategy in $G_T$. Again, $\cC'_G$ may be the class of all trees and it is also possible that $\cC'_G=\emptyset$.  Suppose $S'$ is $\le$-above all trees in $\cC'_G$. Then $\sigma(S')\notin\cC'_G$ i.e. $I$ has a winning strategy in $G_{\sigma(S')}$. So, maximality of the tree gives us positive information  about winning strategies of 
$I$.

A particular closed game of interest in this connection is the transfinite  EF-game. Suppose $M$ and $N$ are models of the same vocabulary, $|M|=|N|=\delta$ and $M\ncong N$. Let $\tau$ be the canonical enumeration strategy (i.e. $I$ enumerates $M\cup N$) of $I$ in the EF-game $\EF^\delta(M,N)$ of length $\delta$ on $M$ and $N$ such that both players are allowed to play a sequence of $<|\delta|$ elements at a time. Because we assume $M\ncong N$, $\tau$ is a winning strategy of $I$. The pairs  $(T,T')$, $T\le T'$,  of trees such that Player II has a winning strategy in  $\EF^\delta(M,N)_T$ but Player I has a winning strategy in $\EF^\delta(M,N)_{T'}$ give information about how far or close  $M$ and $N$ are from being isomorphic. Such pairs outline a kind of boundary where advantage in the game $\EF^\delta(M,N)$ moves from Player II to Player I. Every tree with a branch of length $\delta$ is above the boundary. If $\delta=\omega$, the boundary is (up to $\equiv$) exactly one tree, namely $B_\alpha$ for some (unique) countable ordinal $\alpha$. If $\delta>\omega$, the boundary may be quite wide. Let us assume $\delta=\omega_1$. If the first order theory of $M$ is classifiable in the sense of stability theory, the boundary lies between well-founded trees and non-well-founded trees \cite{MR1083551}. In the opposite case the boundary may be quite high in the class of trees without uncountable branches. For models of size $\aleph_1$ of unstable theories it is above any tree in $\T_1$, if CH is assumed \cite{MR1111753}.


\begin{quote}
    {\bf Open Question:} Are there, for every tree $T\in\T_1$ non-isomorphic models $M$ and $N$ of cardinality $\aleph_1$ such that Player II has a winning strategy in $\EF^{\omega_1}(M,N)_T$?
\end{quote}

A positive answer is known only for the extremely simple trees which consist of countable branches bunched together at the root \cite{MR2431056}. A positive answer follows also from CH \cite{MR1111753}. If there is a weakly maximal tree $T$ in $\T_1$, solving the above question for $T$ gives  automatically a positive answer for all trees in $\T_1$.

The analogue of the Scott height of a countable model in this context is the following, introduced in \cite{MR1111753}: 
A tree $T$ without branches of length $\omega_\alpha$ is called a \emph{universal non-equivalence tree} for a model $M$ of cardinality $\aleph_\alpha$ if for all models $N$ of cardinality $\aleph_\alpha$ in the same vocabulary, if $M\ncong N$, then Player $I$ has a winning strategy in $\EF^{\omega_\alpha}(M,N)_T$. For example, a Canary Tree is (if it exists) a universal non-equivalence tree for the free Abelian group of cardinality $\aleph_1$.
A tree $T$ without branches of length $\omega_\alpha$ is called a \emph{universal equivalence tree} for a model $M$ of cardinality $\aleph_\alpha$ if for all models $N$ of cardinality $\aleph_\alpha$ in the same vocabulary, if Player $II$ has a winning strategy in $\EF^{\omega_\alpha}(M,N)_T$, then $M\cong N$. If $\alpha=0$, every countable model has a universal non-equivalence tree $B_{\alpha+1}$ and a universal equivalence tree $B_{\alpha}$, where $\alpha$ is the Scott height of the model.
For uncountable models the existence of such universal trees depends on stability theoretic properties of the first order theory of the model \cite{MR1111753,MR1295983,MR1367209,MR1777775}. By and large, depnding on cardinal arithmetic, models whose first order theory is unstable have no universal equivalence tree \cite{MR1111753}. Models whose first order theory is superstable, NOTOP and NDOP, have a universal equivalence tree
\cite{MR1111753}.


Our main result  
leaves 
open the possibility of having a narrow $\kappa$-Aronszajn tree which is maximal with respect to wide $\kappa$-Aronszajn trees under strict order preserving homomorphisms.

\subsection*{An outline of the paper}

After some preliminaries in Section \ref{Section:Preliminaries}, we use in Section \ref{Section:BuildingTree} a weakly compact cardinal $\kappa$ to force  a wide $\aleph_2$-Aronszajn tree $T$ by Levy-collapsing $\kappa$ to $\aleph_2$. The tree $T$ is the tree that will be the desired maximal tree in the final model. The levels of $T$ are sufficiently collapse-generic to permit the wide $\aleph_2$-Aronszajn trees arising in the construction to be embedded into $T$. We then define in Section \ref{Section:TheEmbeddingPoset} a $\sigma$-closed countable support iteration of length $\aleph_3$ of forcing with side conditions designed by means of an appropriate book-keeping to force for every wide $\aleph_2$-Aronszajn tree $S$ an embedding $S\to T$. Naturally, we have to make sure $\aleph_2$ is not collapsed during this forcing. Section \ref{Section:StrongProperness} is devoted to showing that our iterated forcing has the right kind of strong properness to guarantee the $\kappa^+$-chain condition and thereby the preservation of $\aleph_2$. We have to also make sure that our tree $T$ will not acquire a long branch during the iteration. This is shown in Section \ref{Section:NoNewBranches}. Theorem~\ref{Theorem:main} is then proved.
We conclude this work with a short open problems section \ref{Section:OpenProblems}.

\subsection*{Our methodology}
 
 When we want to get the consistency of the existence of a maximal wide $\kappa$-Aronszajn  tree,  we face the challenges of preserving $\kappa$, and of showing that the intended universal tree does not obtain a cofinal branch by the iteration. 
 We deal with the former challenge by maintaining that the forcing is strongly proper with respect to sufficiently many structures of cardinality $<\kappa$. 
    Proper forcing methods involving specializing Aronszajn trees has been used in \cite{BG23}. The transition to wide trees and tree embeddings requires the development of a new type of argument for maintaining strong properness, which is developed in  Section 5 of this paper. 
    In addition, having no cofinal branch is a typical example of a second order property of an object that is supposed to be preserved under the iteration.

 To secure the preservation of $\kappa$ and to make sure that no cofinal branches are introduced to the maximal tree, we introduce a forcing with certain special features:
 \begin{itemize}
     \item The original object we intend to become the maximal tree is highly generic. Specifically, a key requirement of each individual poset is that whenever it maps a node $s$ at level $\alpha < \kappa$ of a given tree $S$ to a node $t$ in the intended universal tree then 
the local branch $b_t$ below $t$  is mutually generic from the generic information of the local branch $b_s$ below $s$.

    \item  We make use of substructures $M$ as ``side-conditions'' to guide the generically constructed embeddings. It is crucial that the chosen structures reflect second order assertions about the objects involved in the forcing. The existence of such structures requires the involvement of large cardinals.
 \end{itemize}

${}$
We adopt the following general schema
for proving the consistency of the existence of a universal object of cardinality $\kappa$ in a class of structures satisfying some second order sentence $\Phi$. The scheme consists of: 
\begin{enumerate}
\item Force an object $A$ intended to be the universal object for the property $\Phi$.
\item By dovetailing, iterate forcings which embed each individual structure satisfying $\Phi$ into $A$.
\item Show that the iteration  is proper (or strongly proper) so that we do not collapse the relevant cardinals.
\item Show that $A$  satisfies $\Phi$ after the iteration by using a ``splitting argument" and the fact that the iteration satisfies a certain variation of strong properness. 
\end{enumerate}

\section{Preliminaries}\label{Section:Preliminaries}

The rest of the paper is devoted to showing that consistently, there can be a maximal wide $\kappa$-Aronszajn tree, for any double successor cardinal $\kappa$. We review some facts needed for the proof. Our notation is standard and follows \cite{jech2003set}.

\subsubsection*{Trees: Preliminaries}\label{Section:Preliminaries}
A tree $(T,<_T)$ is a partially ordered set with a minimal element (root) and with the property that for every $t \in T$, the set of its $<_T$-predecessors 
$b_t = \{ \bar{t} \in T \mid \bar{t} <_T t\}$ is well-ordered by $<_T$.
We refer to $b_t$ as the \emph{branch below $t$}.
For an ordinal $\alpha$, the $\alpha$-th level of $T$, denoted  $\Lev_\alpha(T)$ is the set of all $t \in T$ so that $b_t$ has ordertype $\alpha$ in $<_T$. The union 
$\bigcup_{\alpha' < \alpha}\Lev_{\alpha'}(T)$
 is denoted by $\Lev_{<\alpha}(T)$.
The height of the tree $T$ is the minimal $\kappa$ such that $\Lev_\kappa(T) = \emptyset$.
Let $T$ be a tree of height and size $\kappa$.
 We say that $T$ is \emph{narrow}  if $|\Lev_\alpha(T)| < \kappa$ for every $\alpha < \kappa$. Otherwise, we say that $T$ is \emph{wide}.
A subset $b \subseteq T$ is a cofinal branch if it is well ordered by $<_T$ and has order-type $\kappa$. We say that $T$ is $\kappa$-Aronszajn if it has no cofinal branches, i.e. branches of ordertype $\kappa$.
If $M$ is a transitive set that is closed under taking predecessors in the tree order $<_T$, and $t \in T \setminus M$, we define the \emph{exit node} $e_T(t,M)$ of $t$ from $M$ to be the $<_T$-minimal node $e \in b_t \cup \{t\}$ that does not belong to $M$.

\subsubsection*{Weakly compact cardinals: Preliminaries}
A second-order formula $\psi$ is $\Pi^1_1$ if it has at most one second-order quantifier, and that quantifier is universal.

A cardinal $\kappa$ is \textit{weakly compact} if for every $B \subseteq V_\kappa$ and every $\Pi_1^1$ statement $\psi$ true in $(V_\kappa,\in,B)$ the set 
$$
A_\psi = \{\alpha < \kappa \mid (V_\alpha,\in,B \cap V_\alpha) \models \psi\}$$
is nonempty. It follows from the definition that the collection of sets 
$$\{ A_\psi \mid (V_\kappa,\in,B) \models \psi, \thinspace \psi \text{ is } \Pi_1^1 \text{ and } B \subseteq V_\kappa\}$$
generates a $\kappa$-complete normal filter on $\kappa$, denoted by $\fwc$. 

\begin{definition}[Reflecting 
structures]\label{Def:ReflectingSequenceofModels}
Let $\theta \geq \kappa^{++}$ be a regular cardinal and $<_\theta$ a well-ordering of $H_\theta$. For every $P \in H_\theta$ 
we define the reflecting sequence of $P$, 
$$\vec{M}^P = \la M^P_\alpha \mid \alpha \in \dom(\vec{M}^P)\ra$$ 
to consist of all Skolem-hull substructures of $H_\theta$ of the form \[M^P_\alpha = Hull^{(H_\theta,\in,<_\theta,P)}(\alpha)\] with the following properties:
\begin{itemize}
    \item $M^P_\alpha \cap V_\kappa = V_\alpha$, 
    \item for every $Q \subseteq V_\kappa$ such that $Q\in M^P_\alpha$ and a $\Pi_1^1$ statement $\psi$ true in $(V_\kappa,\in,Q)$, if $(V_\kappa,\in,Q) \models \psi$ then $(V_\alpha,\in, Q\cap V_\alpha) \models \psi$. 
\end{itemize}
\end{definition}

\noindent It follows from a standard argument that 
$\dom(\vec{M}^P)$ belongs to $\fwc$ for every $P \in H_\theta$. 

\subsubsection*{Levy Collapse: Preliminaries} Let $\po = \col(\omega_1,<\kappa)$ be the Levy-collapse poset. Conditions $p \in \po$ are countable partial functions $p : \omega_1 \times \kappa \to \kappa$ with the property that $p(\nu,\alpha) < \alpha$ for every $(\nu,\alpha) \in \dom(p)$.
Let $G \subseteq \po$ be a generic filter. For each $\alpha <\kappa$ let $f^G_\alpha : \omega_1 \to \alpha$ be given by $f^G_\alpha(\nu) = \alpha'$ iff there is $p \in G$ and $p(\nu,\alpha) = \alpha'$.
We refer to $f^G_\alpha$ as the collapse generic surjection from $\omega_1$ onto $\alpha$ that is derived from $G$.
By a well-known argument, $\po$ is isomorphic to any number $\tau \leq \kappa$ of copies of itself.
Fix an isomorphism between $\po$ and $\kappa \times \kappa \times \kappa$ copies of itself, 
$$\po \cong \prod_{(\eta,\beta,\alpha) \in \kappa^3} \po(\eta,\beta,\alpha)$$ 
i.e.,  $\po(\eta,\beta,\alpha) = \col(\omega_1,<\kappa)$ for all $\eta,\beta,\alpha \in \kappa$. 
The isomorphism breaks a generic filter $G\subseteq \col(\omega_1,<\kappa)$ to mutually generic filters 
$$\la G(\eta,\beta,\alpha) \mid (\eta,\beta,\alpha) \in \kappa^3\ra,$$ $G(\eta,\beta,\alpha) \subseteq \po(\eta,\beta,\alpha)$.
For each $\tau < \kappa$, let $$f^{G(\eta,\beta,\alpha)}_\tau : \omega_1 \to \tau$$ denote the collapse generic surjection from $\omega_1$ to $\tau$, derived from $G(\eta,\beta,\alpha)$. \\


\section{Building the Wide Tree $T$}\label{Section:BuildingTree}

In this section, we construct a wide tree $T$ in a generic extension $V[G]$, where $G $ is a generic filter on $\col(\omega_1,<\kappa)$. This will be the maximal tree in the final model.

A key ingredient of the construction of the tree $T$ is that its local branches are generically independent. More precisely, considering all $\alpha < \kappa$ of uncountable cofinality, and $t \in \Lev_\alpha(T)$, their associated branches $b_t = \{ t' \in \Lev_{<\alpha}(T) \mid t' <_{T} t\}$ are generically independent of each other in the sense that the parts of the collapse  generic filter $G$ that is required to determine their identity are independent. 
To do this, we associate to each node $t \in \Lev_\alpha(T)$ four parameters
$(\alpha,\beta,\gamma,\delta) \in \kappa^4$
called the \emph{collapse index of} $t$ and use them to determine a segment of $G$ that will define $b_t$. 

We can now give the construction of the wide tree $T \in V[G]$.
The domain of $T$ is the set $\kappa\times\kappa$, and each $\alpha$-th level is the set 
$$\Lev_{\alpha}(T) = \{\alpha\}\times\kappa.$$

The ordering $<_{T}$ is constructed level-by-level and makes use of a fixed isomorphism between $\col(\omega_1,<\kappa)$ and the poset obtained by taking $\kappa^3$ many copies of $\col(\omega_1,<\kappa)$,  
$$\prod_{(\alpha,\beta,\gamma) \in \kappa^3} \po(\alpha,\beta,\gamma)$$ 
with the conventions given in Section \ref{Section:Preliminaries} above: the conditions in $\po$

We maintain an inductive assumption that the restriction of $<_{T}$ to $\Lev_{<\alpha}(T)$ belongs to the intermediate extension $V[G\uhr \alpha \times \kappa \times \kappa]$, 
generic over $V$ for the partial product poset $$\po \uhr (\alpha \times \kappa \times \kappa)  = \prod_{ (\alpha',\beta,\gamma) \in \alpha \times \kappa^2 } \po(\alpha',\beta,\gamma).$$

Suppose that $<_{T}\uhr \alpha \times \kappa$ has been defined  for some $\alpha<\kappa$, i.e. the tree order of $T$ has been defined on $\Lev_{<\alpha}(T)$. \\

\noindent
If $\alpha = \alpha' + 1$ is a successor ordinal then we extend the order to the new $\alpha$-th level of $T$ by adding $\kappa$ many successive nodes above each $t' \in \{\alpha'\} \times \kappa = \Lev_{\alpha'}(T)$. We use a pairing bijection $\la \cdot,\cdot \ra : \kappa \times \kappa \to \kappa$ from $V$ to do this.\\

\noindent
Suppose that $\alpha < \kappa$ is a limit ordinal. To define the extension of $<_{T}$ to the $\alpha$-th level, it suffices to assign each $t \in \{ \alpha\} \times \kappa$ a cofinal branch $b_t \subseteq \Lev_{<\alpha}(T)$, as we can then define $t' <_{T} t$ if and only if $t' \in b_t$. We consider separately the cases when $\alpha$ is of countable cofinality and when $\alpha$ is of uncountable cofinality. \\ 

\noindent
If $\cof(\alpha) = \omega$ and $\la \alpha_n \mid n < \omega\ra$ is an increasing cofinal sequence in $\alpha$, then each cofinal branch $b \subseteq \Lev_{<\alpha}(T)$ is determined by its $\omega$-subseqence $\la b(\alpha_n) \mid n < \omega\ra \in (\alpha \times \kappa)^\omega$. As $|(\alpha\times\kappa)^\omega| = \kappa$, we may pair the cofinal branches through $\Lev_{<\alpha}(T)$ and the nodes at the $\alpha$-th level. Indeed, we enumerate all cofinal branches in $\Lev_{<\alpha}(T)$ as $\la b'_\nu \mid \nu< \kappa\ra$,  and  for each node $t = ( \alpha,\nu) \in \Lev_\alpha(T)$, define $b_t = b'_{\nu}$. So each cofinal branch has a supremum in $\Lev_\alpha(T)$, and each node in $\Lev_\alpha(T)$ has a unique cofinal branch below it. The tree $T$ will be $\sigma$-closed.\\

\noindent
Suppose that $\alpha < \kappa$ is a limit ordinal of uncountable cofinality. 
The following definition will be used to define the cofinal branches $b \subseteq \Lev_{<\alpha}(T)$ that will be extended to nodes at level $\alpha$.

\begin{definition}\label{Def:b_f}
Suppose that $T'$ is a $\sigma$-closed normal tree on $\alpha \times \beta$, $\delta \geq \alpha \cdot \beta$ is an ordinal and $f : \omega_1 \to \delta$ is a function such that for every $\mu < \delta$, $f^{-1}(\mu) \subseteq \omega_1$ is unbounded. 

\begin{enumerate}
\item 
Define the branch $b^f \subseteq T'$ determined by $f$ to be the sequence $\la t_i \mid i < \omega_1\ra$ defined as follows.

To define $t_0$ we look at the minimal $j < \omega_1$ for which  $f(j) < \delta$ is of the form  $f(j) = \alpha\cdot \beta_0 + \alpha_0 < \alpha \cdot \beta$. We then set $t_0 = (\alpha_0,\beta_0)$.

Suppose that $\la t_i \mid i < i^*\ra$ has been defined.
If $i^*$ is a limit ordinal then $t_{i^*}$ is the limit of the countable sequence $\la t_i \mid i < i^*\ra$. 
Otherwise $i^* = i +1$. 
Define $t_{i^*} = (\alpha^*,\beta^*)$ by looking at $j^* < \omega_1$ minimal such that 
$f(j^*) = \alpha\cdot \beta^* + \alpha^* < \alpha \cdot \beta$, 
and $t_i <_{T'} (\alpha^*,\beta^*)$ for all $i < i^*$. We then set $t_{i^*} = (
\alpha^*,\beta^*)$.

\item If $q  = f \uhr \nu:  \nu \to \delta$ is an initial segment of $f$, then $q$ naturally determines an initial sequence $\la t_i \mid i \leq i_q\ra$ of $b^f$ which has a maximal element $t_{i_q}$. We denote the last node $t_{i_q}$ by $\pi_q(b^f)$ and call it the \emph{projection} of $b_f$ determined by $q$. 
\end{enumerate}
\end{definition}

\noindent
The next lemma follows from a standard density argument and the definition of cofinal branches $b_f$ (\ref{Def:b_f}).

\begin{lemma}\label{Lemma:b_fISCofinal}
Suppose that $T'$ is a $\sigma$-closed normal tree on $\alpha \times \beta$. If $\delta\geq\alpha\cdot\beta$ and $f : \omega_1 \to \delta$ is a $\col(\omega_1,\delta)$-generic over a model $V'$ that contains $T'$, then $b^f \subseteq T'$ is a cofinal branch in $T'$. 
\end{lemma}

We shall use Definition \ref{Def:b_f} to determine the branches $b_t$  for $t \in\Lev_\alpha(T)$ by choosing subtrees $T'$ of $\Lev_{<\alpha}(T)$ and using collapse generic functions $f$ to form branches $b^f$ through $T'$. 
In this setup, $T'$ belongs to the intermediate extension $V'$ of $V$ by $\po' = \po\uhr \alpha \times \kappa \times \kappa$. This means that the tree projection maps $\name{\pi}_p(t)$ from definition \ref{Def:b_f} will be $\po'$-names of nodes in $\Lev_{<\alpha}(T)$. We make the following observation about the nature of the construction that follows from this setup. 

\begin{lemma}\label{Lemma:TreeProjectionAtTopONly}
Suppose that $\name{T'}$ is $\po'$-name of a tree on $\alpha \times \beta$ for some poset $\po'$, $\delta \geq \alpha \cdot \beta$ is an ordinal, and $\name{f}$ is a $\col(\omega_1,\delta)$-name for the generic collapse. Let $b^{\name{f}}$ be the $\po' \times \col(\omega_1,\delta)$-name for the associated generic branch through $T'$, and let $q \mapsto \name{\pi}_q(b^{\name{f}})$ denote the $\po'$-name for the projection assignment to conditions $q \in \col(\omega_1,\delta)$. 
Suppose that $(p',q) \in \po' \times \col(\omega_1,\delta)$ is such that for some $t^* \in T'$, 
$$p' \Vdash_{\po'} \name{\pi}_q(b^{\dot{f}}) = \check{t}^*.$$
For every $t' \in T'$ there is an extension $q' \leq q$ such that 
$$ p' \Vdash_{\po'} \text{ if } t' >_{T'} \check{t}^* \text{ then } t'\leq_{T'}\name{\pi}_{q'}(\dot b^f).$$
\end{lemma}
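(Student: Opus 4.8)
The plan is to reduce Lemma~\ref{Lemma:TreeProjectionAtTopONly} to the single-coordinate analogue of Lemma~\ref{Lemma:FlexibleProjections1}, carrying the $\po'$-name for $T'$ along as a passive side parameter. First I would fix a condition $p'' \leq p'$ in $\po'$ and a $\po'$-name $\name{t}^*$ such that $p'' \Vdash \name{\pi}_q(b^f) = \check t^*$; by hypothesis we may as well take $p'' = p'$. Working below $p'$, we have a genuine (name for a) $\sigma$-closed normal tree $T'$ on $\alpha \times \beta$, a $\col(\omega_1,\delta)$-name $\name f$ for the generic surjection, and the derived name $b^f$ for the associated branch as in Definition~\ref{Def:b_f}, together with the projection assignment $q \mapsto \name{\pi}_q(b^f)$. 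The point is that the value $\name{\pi}_q(b^f)$ depends only on the finite amount of information in $q$ (namely $q$ decides $f$ on some countable domain) and on the restriction of $T'$ below $\check t^*$, both of which are already fixed below $p'$.

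The key step is then the following density argument, which is exactly the one behind Lemma~\ref{Lemma:FlexibleProjections1} specialized to one collapse coordinate. Given any $t' \in T'$ (formally, a $\po'$-name for a node, but by the same reduction we may assume $p'$ decides it) with $p' \Vdash t' >_{T'} \check t^*$, I want to find $q' \leq q$ in $\col(\omega_1,\delta)$ forcing $\pi_{q'}(b^f) = t'$. Recall from Definition~\ref{Def:b_f} that $\pi_q(b^f) = t_{i_q}$ is the last node of the initial segment of $b^f$ determined by $q$; since $q$ already forces this last node to be $\check t^*$, extending $q$ means continuing the construction of $b^f$ from $t^*$ upward. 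Using that $f$ is a surjection onto $\delta \geq \alpha\cdot\beta$ with every fiber unbounded, and that $T'$ is $\sigma$-closed and normal, I can enumerate the (countably many) nodes along the branch of $T'$ from $t^*$ up to $t'$ and, one at a time, extend $q$ to decide the next value of $f$ witnessing each such node, exactly as $t_{i+1}$ is chosen in part~(1) of Definition~\ref{Def:b_f}; $\sigma$-closure of $\col(\omega_1,\delta)$ lets me take the limit over this countable process and obtain a single $q' \leq q$. Normality of $T'$ ensures each successive node can be realized, and the construction guarantees $\pi_{q'}(b^f) = t'$. Since $t'$ was $>_{T'} t^*$ only on a subset of the generic, the conclusion is stated conditionally: $p' \Vdash$ ``if $t' >_{T'} \check t^*$ then $\pi_{q'}(b^f) = t'$.''

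Finally I would assemble these pieces: given the hypothesis $p' \Vdash \name{\pi}_q(b^f) = \check t^*$ and an arbitrary $t' \in T'$, apply the density argument to produce $q' \leq q$; then by the absoluteness of the projection assignment (it is computed inside the intermediate model $V'$ from $T'$, $t^*$ and $q'$ alone), $p'$ forces the displayed conditional statement. The main obstacle I expect is bookkeeping around the ``name'' status of the objects: $T'$, $t^*$ and $t'$ are a priori $\po'$-names, not honest sets, so the recursive construction of $q'$ has to be carried out in the $\po'$-extension $V'$ — i.e. one argues that $p'$ forces the existence of such a $q'$ and then, since $\col(\omega_1,\delta)$ is (forced to be) sufficiently closed and the relevant data is decided below $p'$, one can pull $q'$ back to a single condition in the ground model (or simply leave it as a $\po'$-name, which is all the statement requires). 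The rest — the density/closure argument for extending $q$ node-by-node — is the routine computation underlying Lemmas~\ref{Lemma:b_fISCofinal} and~\ref{Lemma:FlexibleProjections1} and I would not grind through it in detail.
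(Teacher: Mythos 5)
The paper does not supply a proof for Lemma~\ref{Lemma:TreeProjectionAtTopONly}; like Lemmas~\ref{Lemma:b_fISCofinal} and~\ref{Lemma:FlexibleProjections1} it is stated as an immediate consequence of Definition~\ref{Def:b_f} and the usual density arguments for Levy collapse. So there is no official proof to match your proposal against, and I can only judge it on its own terms.

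Your general strategy --- reduce to a density argument for the single collapse coordinate, work below $p'$, and then pull back --- is the right one, and your closing remarks about the name-status of $T'$, $t^*$, $t'$ and where the construction of $q'$ lives are all appropriate. However, the central step as written does not work. You propose to ``enumerate the (countably many) nodes along the branch of $T'$ from $t^*$ up to $t'$'' and extend $q$ to witness each of them in turn, closing off by $\sigma$-closure. Two problems. First, there need not be only countably many tree nodes between $t^*$ and $t'$: the tree sits on $\alpha \times \beta$ with $\alpha < \kappa$ arbitrary, so the interval of the tree chain below $t'$ and above $t^*$ can be uncountable. Second, and more to the point, the branch $b^f$ of Definition~\ref{Def:b_f} is a \emph{sequence} $\langle t_i \mid i < \omega_1\rangle$ of nodes indexed by the order in which the collapse reveals them; consecutive $t_i, t_{i+1}$ are not tree-adjacent, and $b^f$ does not list every node of the underlying chain. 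So forcing every intermediate tree node to appear in $b^f$ is neither possible (cardinality) nor what the definition calls for.

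The correct argument is in fact a single step, with no iteration and no use of $\sigma$-closure. Since $q$ decides $\pi_q(b^f) = t^*$, $q$ is an initial segment $f\!\uhr\nu$ (for some $\nu$), and by minimality in the successor clause of Definition~\ref{Def:b_f} no $j < \nu$ has $q(j)$ coding a node strictly above $t^*$ (otherwise $q$ would already have pushed $\pi_q(b^f)$ past $t^*$). Writing $t' = (\alpha',\beta')$, put $q' = q \cup \{(\nu,\ \alpha\cdot\beta' + \alpha')\}$. In the $\po'$-extension: if $t' >_{T'} t^*$, then $\nu$ is the minimal index for which $f(\nu)$ codes a node above $t^*$, so the next element of the branch sequence is $t_{i_q+1} = t'$ and $q'$ determines nothing beyond it, hence $\pi_{q'}(b^f) = t'$; if $t' \not>_{T'} t^*$, the conclusion is vacuous. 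So the conditional $p'\Vdash_{\po'}\bigl(t'>_{T'}\check t^* \rightarrow \pi_{q'}(b^f) = t'\bigr)$ holds. You should replace the iterative/closure step with this one-line extension; the rest of your reduction is fine.
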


\noindent We use definition \ref{Def:b_f} above to define the branches $b_t$ for $t \in \Lev_{\alpha}(T)$.

\begin{definition}\label{Def:ValidQuadrupels}
Call a quadruple $(\alpha,\beta,\gamma,\delta) \in \kappa^4$ \emph{valid for $\Lev_\alpha(T)$ }
if the following conditions hold:
\begin{itemize}
    \item The subset $\alpha \times \beta$ of $\Lev_{<\alpha}(T)$ is closed under $<_{T}$,\footnote{I.e., $T\uhr (\alpha \times \beta)$ is a subtree of $\Lev_{<\alpha}(T)$.}

    \item $\alpha\cdot\beta\leq\gamma\leq\delta$.

\end{itemize}
\end{definition}

For each valid quadruple  $(\alpha,\beta,\gamma,\delta)$, 
the function $f = f^{G(\alpha,\beta,\gamma)}_\delta : \omega_1 \to \delta$ satisfies the assumption of Lemma \ref{Lemma:b_fISCofinal} with respect to the tree $T' = \alpha \times \beta \subseteq \Lev_{<\alpha}(T)$. 
Since the tree ordering on $\Lev_{<\alpha}(T)$ is assumed to be defined in $V' = V[G\uhr \alpha \times \kappa \times \kappa]$ and $f_\delta^{G(\alpha,\beta,\gamma)}$ is $\col(\omega_1,\delta)$ generic over $V'$, it follows from Lemma \ref{Lemma:b_fISCofinal} that the branch $b^{f^{G(\alpha,\beta,\gamma)}_\delta}$ is cofinal in $\Lev_{<\alpha}(T)$. \\

\noindent
It is clear that the set of valid quadruples for $\Lev_{\alpha}(T)$ has size $\kappa$.
Let \[\la (\alpha_\nu,\beta_\nu,\gamma_\nu,\delta_\nu) \mid \nu < \kappa\ra\] be an enumeration of all valid quadruples for $\Lev_{\alpha}(T)$.

\begin{definition}[Collapse index]
Let $t = (\alpha,\nu) \in \Lev_\alpha(T)$. The \emph{collapse-index} of $t$ is the quadruple $(\alpha_\nu,\beta_\nu,\gamma_\nu,\delta_\nu)$. 
\end{definition}

\begin{convention}\label{conv:collapseindex}
    Let $t\in T$. Whenever the collapse index of a node $t$ is decided by a condition $p$ and the idensityt of the condition $p$ is clear from context, then the collapse index of $t$ is denoted by $(\alpha_t,\beta_t,\gamma_t,\delta_t)$.
\end{convention}

Collapse index will indicate the part of the generic used to define the branch below $t$. We want to define a wide tree $T$ which will provide branches that are very generic over any small part of it. Collapse index will take care of this. 

\begin{remark}\label{Remark:CollapseIndexIsInjective}
    The assignment of $t \in T$ to its collapse index $(\alpha_t,\beta_t,\gamma_t,\delta_t) \in \kappa^4$ is injective.
\end{remark}

\noindent In $V[G\rest((\alpha+1)\times\kappa\times\kappa)]$, the function associated to each node $t=(\alpha,\nu)\in\Lev_\alpha(T)$ is the function $f_t = f^{G(\alpha_\nu,\beta_\nu,\gamma_\nu)}_{\delta_\nu}$. Set $b_t = b^{f_t}$. We have defined the map from $\Lev_\alpha(T)$ to cofinal branches through $\Lev_{<\alpha}(T)$. We let
\[
s<_Tt:\iff s\in b_t.
\]
This concludes the construction of the tree $T$. 
We state a number of basic properties of $T$  that follow from its construction. 
The first is an immediate consequence of our  level-by-level definition of $T$.

\begin{lemma}
Suppose that $M_\alpha \elem (H_{\kappa^{++}},\in)$ is an elementary substructure with ${}^\omega M_\alpha \subseteq M_\alpha$ and $M_\alpha \cap \kappa = \alpha$ is a regular cardinal,   then
$$\po \cap M_\alpha = \prod_{(\alpha',\beta',\gamma')\in \alpha^3} \po(\alpha',\beta',\gamma') \cap V_\alpha$$
is a regular subforcing of $\po$, and $T \cap M_\alpha$ is forced to be equivalent to a $(\po \cap M_\alpha)$-name of a subtree of $T$.
\end{lemma}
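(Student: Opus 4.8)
The plan is to prove the two assertions separately, relying in both cases on the fact that the construction of $T^h$ is by a transfinite recursion on levels $\alpha<\kappa$ in which the data at stage $\alpha$ depend only on $h\uhr\alpha$ and live in $V[G\uhr(\alpha\times\kappa\times\kappa)]$. Write $\mathbb{Q} = \po\cap M_\alpha$.

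First I would verify that $\mathbb{Q}$ is a regular subforcing of $\po$. Since $M_\alpha\elem(H_{\kappa^{++}},h)$ with $M_\alpha\cap\kappa=\alpha$ and $\alpha$ regular, elementarity gives that $M_\alpha$ sees the fixed isomorphism $\po\cong\prod_{(\eta,\beta,\gamma)\in\kappa^3}\po(\eta,\beta,\gamma)$; hence $\po\cap M_\alpha$ is exactly the set of conditions supported on $(M_\alpha\cap\kappa^3)$ with values in $M_\alpha$. Using ${}^\omega M_\alpha\subseteq M_\alpha$, conditions in $\col(\omega_1,<\kappa)$ lying in $M_\alpha$ are precisely those countable partial functions $p$ whose domain and range are absorbed by $M_\alpha$; combined with $M_\alpha\cap V_\kappa=V_\alpha$ this shows $\po\cap M_\alpha=\prod_{(\alpha',\beta',\gamma')\in\alpha^3}\po(\alpha',\beta',\gamma')\cap V_\alpha$, which as a product of copies of $\col(\omega_1,<\alpha)$ over $\alpha^3$ is (isomorphic to) $\col(\omega_1,<\alpha)$ itself. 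Regularity — that maximal antichains of $\mathbb{Q}$ remain maximal in $\po$, equivalently that $\mathbb{Q}$ is a complete suborder — follows from the standard fact that for the Levy collapse $\col(\omega_1,<\alpha)$ is a regular subforcing of $\col(\omega_1,<\kappa)$ when $\alpha<\kappa$ (the product/factoring structure makes $\po\cong\mathbb{Q}\times(\po/\mathbb{Q})$), using ${}^\omega M_\alpha\subseteq M_\alpha$ to see that a predense subset of $\mathbb{Q}$ in $M_\alpha$ is predense in $\po$ because any $p\in\po$ has its ``$\mathbb{Q}$-part'' $p\uhr(\alpha\times\kappa\times\kappa)$ lying in $\mathbb{Q}$ and compatible with $p$.

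Next, for the statement about $T^h\cap M_\alpha$: the key point is the inductive invariant already stated in the construction, namely that $<_{T^h}\uhr(\alpha\times\kappa)$ depends only on $h\uhr\alpha$ and belongs to $V[G\uhr(\alpha\times\kappa\times\kappa)] = V[G\cap\mathbb{Q}]$. By the convention identifying the domain of $T^h$ with $\kappa\times\kappa$ and $T^h_{<\alpha}$ with $\alpha\times\kappa$, elementarity of $M_\alpha$ together with $M_\alpha\cap\kappa=\alpha$ yields that $T^h\cap M_\alpha$ has underlying set exactly $\alpha\times\alpha$ (the triples $(\alpha',\nu,h\uhr\alpha')$ with $\alpha',\nu<\alpha$), i.e. a subtree of $T^h_{<\alpha}$. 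I would then argue that the tree order on $\alpha\times\alpha$ is $\mathbb{Q}$-definable: the recursion defining $<_{T^h}\uhr\alpha\times\kappa$ refers only to $h\uhr\alpha$ and to the generic collapse functions $f^{G(\alpha',\beta',\gamma')}_{\delta'}$ with $\alpha'<\alpha$, and for the sub-board $\alpha\times\alpha$ the parameters $\beta',\gamma',\delta'$ arising in valid quadruples (Definition~\ref{Def:ValidQuadrupels}) and in the enumeration of branches are all below $\alpha$ by elementarity and $M_\alpha\cap\kappa=\alpha$; hence all of these generic functions are read off from $G\cap\mathbb{Q}$, and the recursion relativizes to $M_\alpha$. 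Thus in $V[G\cap\mathbb{Q}]$ one can define a name for the subtree, and pulling back along the regular embedding $\mathbb{Q}\lhd\po$ exhibits $T^h\cap M_\alpha$ as (forced to be equivalent to) a $\mathbb{Q}$-name of a subtree of $T^h$.

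The main obstacle I anticipate is the bookkeeping verifying that, when one restricts attention to the sub-board $\alpha\times\alpha$, every piece of generic information consulted by the recursion really lies in the smaller factor $\mathbb{Q}$ — in particular checking that the ``collapse index'' $(\alpha',\beta',\gamma',\delta')$ of a node $(\alpha',\nu)$ with $\alpha',\nu<\alpha$ has all coordinates below $\alpha$, which is where $M_\alpha\cap\kappa=\alpha$, the closure ${}^\omega M_\alpha\subseteq M_\alpha$ (to handle the countable sequences used in the limit-of-cofinality-$\omega$ case and in $b^f$), and the elementarity of $M_\alpha$ in $(H_{\kappa^{++}},h)$ (to handle $h\uhr\alpha$ and the definable enumerations) are all used simultaneously. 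The $\Pi^1_1$-reflection clause in the ambient framework is not needed here; it is the combinatorial closure of $M_\alpha$ that does the work.
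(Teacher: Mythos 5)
The paper offers no proof for this lemma, prefacing it only with the remark that it ``is an immediate consequence of our level-by-level definition of $T^h$,'' so there is nothing to compare against beyond the implicit intent of the construction. Your proof supplies the natural argument: the identification $\po\cap M_\alpha = \prod_{(\alpha',\beta',\gamma')\in\alpha^3}\po(\alpha',\beta',\gamma')\cap V_\alpha$ via $\omega$-closure and $M_\alpha\cap\kappa=\alpha$, regularity from the product decomposition of the Levy collapse, and then the observation that the recursion defining $<_{T^h}$ on the sub-board $\alpha\times\alpha$ consults only generic information indexed below $\alpha$, thanks to elementarity forcing all collapse indices $(\alpha',\beta,\gamma,\delta)$ of nodes in $M_\alpha$ to lie in $\alpha^4$. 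This is exactly the kind of verification the paper delegates to the reader, and you correctly locate where each hypothesis (regularity of $\alpha$, $\omega$-closure, elementarity with $h$ as a predicate) is used, and correctly note that $\Pi^1_1$-reflection plays no role here.

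One small but real slip in the regularity paragraph: you write that any $p\in\po$ has its ``$\mathbb{Q}$-part'' $p\uhr(\alpha\times\kappa\times\kappa)$ lying in $\mathbb{Q}$. That restriction is the projection to the \emph{partial product} $\po\uhr(\alpha\times\kappa\times\kappa)=\prod_{(\alpha',\beta,\gamma)\in\alpha\times\kappa^2}\po(\alpha',\beta,\gamma)$, which is strictly larger than $\mathbb{Q}$: it retains factors whose second or third coordinate is $\geq\alpha$, and within each retained factor it does not truncate the collapse condition to $V_\alpha$. The correct $\mathbb{Q}$-projection of $p$ is $p$ restricted to factors in $\alpha^3$ \emph{and}, within each such factor, restricted to $\omega_1\times\alpha$; with that reading the product decomposition $\po\cong\mathbb{Q}\times\mathbb{R}$ and the predensity argument go through as you describe. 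I'd also note that for the countable-cofinality limit levels, one needs the fixed enumeration of cofinal branches to be definable in $(H_{\kappa^{++}},h)$ so that elementarity forces $b'_\nu\subseteq\alpha'\times\alpha$ for $\nu<\alpha$; you gesture at this (``definable enumerations'') but it is worth making explicit, since it is precisely the kind of bookkeeping obstacle you flag.
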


\noindent
Let $t \in \Lev_\alpha(T)$ and let $(\alpha,\beta,\gamma,\delta)$ be the collapse index of $t$, for some $\alpha<\kappa$ of uncountable cofinality.
Then $b_t$ is a cofinal branch in the subtree $T \uhr (\alpha \times \beta)$ whose order is defined in the intermediate generic extension $V' = V[G \uhr \alpha\times \kappa \times \kappa]$. The branch itself comes from the $V'$-generic filter $G(\alpha,\beta,\gamma,\delta)$ on $\col(\omega_1,\delta)$.
Working in $V'$,  definition \ref{Def:b_f} allows us to assign each $q \in \col(\omega_1,\delta)$ a node $\pi_q(t) \in T\uhr (\alpha \times \beta)$ which is the maximal node forced by $q$ to be in $b_t$. 
Back in $V$, $\name{\pi}_q(t)$ is a $\po\uhr (\alpha\times \kappa \times \kappa)$-name for a node in $\Lev_{<\alpha}(T)$ that is decided on a dense open set.

\begin{definition}\label{Def:D(t)}
For each node $t \in \Lev_{\alpha}(T)$, 
let $D(t) \subseteq \po$ be the set of conditions $p$ that decide the collapse index of $t$ and so that $p\uhr  \alpha \times \kappa \times \kappa$ decides the $\po\rest(\alpha\times\kappa\times\kappa)$-name of the node $\name{\pi}_q(t) \in \Lev_{<\alpha}(T)$ where $q = p\uhr \{ (\alpha_t,\beta_t,\gamma_t,\delta_t)\}\in\col(\omega_1,\delta_t)$ (See Convention \ref{conv:collapseindex}). Being decided by $p$, we denote it by $\name{\pi}_p(t)$. 
\end{definition}

\noindent
It follows from the construction of $T$ that $T$ is $\sigma$-closed, i.e. that every countable increasing sequence of nodes has a supremum. It follows from the construction of the tree that for each $t \in T$, the projection $\name{\pi}_p(t)$ is defined for densely many $p \in \po$.
\begin{lemma}
$D(t)$ is dense and $\sigma$-closed for each $t \in T$.
\end{lemma}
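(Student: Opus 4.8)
The plan is to prove that $D(t)$ is dense and $\sigma$-closed by unwinding Definition \ref{Def:D(t)} and using the projection apparatus from Definition \ref{Def:b_f} together with Lemma \ref{Lemma:TreeProjectionAtTopONly}. Fix $t \in \Lev_\alpha(T^h)$ with collapse index $(\alpha,\beta,\gamma,\delta)$. Recall $D(t)$ consists of those $p \in \po$ for which $p \uhr \alpha \times \kappa \times \kappa$ decides the $\po \uhr (\alpha \times \kappa \times \kappa)$-name $\name{\pi}_q(t)$, where $q = p \uhr \{(\alpha,\beta,\gamma)\} \times \kappa$. First I would observe the factorization of $\po$ as $(\po \uhr \alpha \times \kappa \times \kappa) \times \po(\alpha,\beta,\gamma) \times (\text{rest})$, and note that whether $p \in D(t)$ depends only on the first two coordinates of $p$; the "rest" coordinate is irrelevant, so it suffices to argue in $(\po \uhr \alpha \times \kappa \times \kappa) \times \po(\alpha,\beta,\gamma)$.

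For density: given an arbitrary $p \in \po$, work in the intermediate model $V' = V[G \uhr \alpha \times \kappa \times \kappa]$ where the tree order on $T^h_{<\alpha}$ and hence the subtree $T' = \alpha \times \beta$ lives. By genericity of $f = f^{G(\alpha,\beta,\gamma)}_\delta$ over $V'$ (Lemma \ref{Lemma:b_fISCofinal}), the condition $q = p \uhr \{(\alpha,\beta,\gamma)\} \times \kappa$ has $\dom(q \uhr \{\delta\})$ countable, so $\name\pi_q(t)$ is already a genuine node; but to land inside $D(t)$ we need $p \uhr \alpha \times \kappa \times \kappa$ to outright \emph{decide} which node it is. To arrange this, extend $p \uhr \alpha \times \kappa \times \kappa$ to some $p'$ that forces a value for $\name{\pi}_q(t)$ (such an extension exists since $\name{\pi}_q(t)$ is a name for a node and the name, read in $V'$, takes a definite value, so some condition in the quotient forces it). Then $p' \cup q \cup (p \uhr \text{rest}) \leq p$ lies in $D(t)$. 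Here I would invoke Lemma \ref{Lemma:TreeProjectionAtTopONly} only if I also wanted to control which node is hit; for bare density it is not needed, but it is the right tool if one wants density below a prescribed node of $T'$.

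For $\sigma$-closure: let $\langle p_n \mid n < \omega\rangle$ be a decreasing sequence in $D(t)$. Since $\po = \col(\omega_1, <\kappa)$ is itself $\sigma$-closed (conditions are countable partial functions, and a countable decreasing union is a condition), the pointwise union $p_\omega = \bigcup_n p_n$ is a condition in $\po$. It remains to check $p_\omega \in D(t)$, i.e. that $p_\omega \uhr \alpha \times \kappa \times \kappa$ decides $\name{\pi}_{q_\omega}(t)$ where $q_\omega = p_\omega \uhr \{(\alpha,\beta,\gamma)\} \times \kappa$. The key point is that the projection $\name{\pi}_{q}(t)$ depends on $q$ only through the countable initial segment $q \uhr \{\delta\}$ of the collapse function it constrains (by the construction of $b^f$ in Definition \ref{Def:b_f}: each $t_i$ is determined by looking at minimal $j < \omega_1$ satisfying certain equalities, so a condition of countable domain already pins down a finite/countable initial run of the $t_i$'s and its maximum). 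Moreover the sequence $q_n \uhr \{\delta\}$ is increasing with union $q_\omega \uhr \{\delta\}$, and once $q_n$ already determines a node $\pi_{p_n}(t)$, extending $q$ can only move this node \emph{up} the branch — but in fact, since each $p_n$ already decides $\name\pi_{q_n}(t)$, and $q_{n+1} \supseteq q_n$, the decided values form an $<_{T^h}$-increasing sequence of nodes in $T'$, which (having countable cofinality or less, and $T'$ being $\sigma$-closed normal) has a well-defined sup; and $p_\omega \uhr \alpha \times \kappa \times \kappa$, extending each $p_n \uhr \alpha \times \kappa \times \kappa$, forces $\name\pi_{q_\omega}(t)$ to equal that sup. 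Thus $p_\omega \in D(t)$.

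The main obstacle I anticipate is the $\sigma$-closure verification — specifically, making precise the claim that $\name{\pi}_q(t)$ is continuous in $q$ in the relevant sense: that the value forced by a countable increasing union of conditions is the $<_{T^h}$-supremum of the values forced by the pieces. This requires carefully tracing through Definition \ref{Def:b_f}(2): as $q$ grows, the initial segment $\langle t_i \mid i \leq i_q\rangle$ of $b^f$ it determines grows, and one must confirm both that $i_{q_\omega} = \sup_n i_{q_n}$ (or that $i_{q_\omega}$ is the limit ordinal at which the union of pieces is continued) and that $\sigma$-closedness of $T'$ provides the limit node $t_{i_{q_\omega}}$. Density is comparatively routine, resting only on the fact that names for nodes can be decided by passing to the quotient forcing. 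I would present $\sigma$-closure as the substantive half and keep density to a sentence or two.
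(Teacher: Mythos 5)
Your plan is sound, and since the paper states this lemma without a proof there is no official argument to compare against; I can only check against the natural expected one. Your density argument is correct: $\name{\pi}_q(t)$ is a $(\po\restriction\alpha\times\kappa\times\kappa)$-name that is decided on a dense open set (the paper says so just before Definition~\ref{Def:D(t)}), so one only needs to extend the first-coordinate block of $p$, and you are right that Lemma~\ref{Lemma:TreeProjectionAtTopONly} is not needed for bare density.

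For $\sigma$-closure you correctly locate the crux---that $p_\omega\restriction\alpha\times\kappa\times\kappa$ should force $\name{\pi}_{q_\omega}(t)$ to equal the $<_{T'}$-supremum $t^*$ of the decided values $t^{(n)}:=\pi_{p_n}(t)$---but you flag this rather than close it, and you rightly worry that $i_{q_\omega}$ could overshoot $\sup_n i_{q_n}$. Here is the missing step. Let $\nu_n$ be the ordinal that is the domain of $q_n\restriction\{\delta\}$ (Definition~\ref{Def:b_f}(2) only assigns a projection to such $q$), so $\nu_\omega=\sup_n\nu_n$ is the domain of $q_\omega\restriction\{\delta\}$. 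The inequality $\name{\pi}_{q_\omega}(t)\geq_{T'} t^*$ is forced by monotonicity of the projection in $q$. For the reverse inequality, unpack what $p_n\in D(t)$ actually commits to: $p_n\restriction\alpha\times\kappa\times\kappa$ forces not merely that the recursion of Definition~\ref{Def:b_f} run on $q_n$ reaches $t^{(n)}$, but that it \emph{stalls} there, which (since the recursion only stalls at successor steps, and $q_n\restriction\{\delta\}$ determines $f$ on the whole initial segment $\nu_n$) amounts to forcing that no $j<\nu_n$ has $q_n(j)$ decoding to a node of $T'$ strictly $<_{T'}$-above $t^{(n)}$. Now any node $<_{T'}$-above $t^*$ is $<_{T'}$-above each $t^{(n)}$; so for any $j<\nu_\omega$, pick $n$ with $j<\nu_n$, and the statement forced by $p_n\restriction\alpha\times\kappa\times\kappa$ shows that $q_\omega(j)=q_n(j)$ cannot decode above $t^*$. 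Hence $p_\omega\restriction\alpha\times\kappa\times\kappa$ forces the recursion on $q_\omega$ to reach $t^*$ (the limit clause of Definition~\ref{Def:b_f} together with $\sigma$-closure of $T'$ supplies the limit node when the $t^{(n)}$ strictly increase) and to stall there, so it decides $\name{\pi}_{q_\omega}(t)=t^*$ and $p_\omega\in D(t)$. This propagation of the ``stalling'' facts from the $p_n$ to $p_\omega$ is the one substantive observation your write-up was missing; it is exactly what rules out the overshoot scenario you flagged.
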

\begin{proof}
    For density, fix $p\in\po$ and $t\in T$. Up to extending $p$, we assume that it decides the collapse index of $t$. Suppose that the collapse index of $t$ is $(\alpha,\beta,\gamma,\delta)$. Then the branch below $t$ is determined by the countable partial function $f:=p(\alpha,\beta,\gamma,\delta\cdot):\omega_1\to\delta\in\col(\omega_1,\delta)$. We may extend $p\cap V_{\alpha\cdot\beta}$ to some $p'$ in $\po\cap V_{\alpha\cdot\beta}$, such that $p'$ decides the tree-order among nodes in the set
    \[
    \rng(f)\cap \{\alpha\cdot\beta^*+\alpha^*<\alpha\cdot\beta\mid(\alpha^*,\beta^*)\in\alpha\times\beta\}
    \]
    Then the pointwise union $p'\cup p$ belongs to $D(t)$. The fact that $D(t)$ is $\sigma$-closed follows similarly.
\end{proof}

\noindent
The following is a straightforward application of Lemma \ref{Lemma:TreeProjectionAtTopONly} to countably many mutually generic branches. 

\begin{lemma}\label{Lemma:FlexibleProjections2}
Let $\la t_n \mid n < \omega\ra$ be a sequence of distinct nodes in $T$. 
Suppose that $p \in \bigcap_n D(t_n)$ and  $\la t'_n \mid n < \omega\ra$ is a sequence of nodes such that for each $n < \omega$, $t'_n \in T \uhr (\alpha_{t_n} \times \beta_{t_n})$.
Then there is an extension $p' \leq p$ with the following properties:
\begin{enumerate}
    \item $p'$ and $p$ are equal, except maybe at the collapse indices $(\alpha_{t_n},\beta_{t_n},\gamma_{t_n},\delta_{t_n})$ of the points $t_n$.
    \item for every $q \leq p'$, if $q \Vdash \name{\pi}_{p}(t_n) \leq_{T} t'_n$ then
    $q \Vdash t'_n \leq_{T} \name{\pi}_q(t_n)$.
\end{enumerate}
\end{lemma}
\noindent Note that $q \Vdash t'_n \leq_{T} \name{\pi}_q(t_n)$ implies $q \Vdash t'_n \leq_{T} t_n$.

\begin{proof}[Proof of Lemma \ref{Lemma:FlexibleProjections2}]
    For each $n<\omega$, denote by $f_n$ the partial function \[p(\alpha_{t_n},\beta_{t_n},\gamma_{t_n},\delta_{t_n},\cdot):\omega_1\to\delta_{t_n}\] in $\col(\omega_1,\delta_{t_n})$. For each $n<\omega$, let $j_n=\min(\omega_1-\dom(f_n))$. Suppose that $t'_n=(\alpha'_n,\beta'_n)$. Define $p'$ from $p$ by adding the pair
    \[
    (\alpha_{t_n},\beta_{t_n},\gamma_{t_n},\delta_{t_n},j_n,\alpha\cdot\beta'_n+\alpha'_n),
    \]
    for every $n<\omega$. Then $p'$ is as wanted.
\end{proof}

We summarize the section by a lemma that shows that the tree $T$ is wide Aronszajn in $V[G]$ for any collapse generic filter $G\subseteq\col(\omega_1,<\kappa)$.

\begin{lemma}
    Let $G\subseteq\col(\omega_1,<\kappa)$ be a generic filter. The tree $T$ is a wide $\kappa$-Aronszajn tree in $V[G]$.
\end{lemma}
\begin{proof}
    It is clear that $T$ is a wide $\kappa$-tree. Suppose to the contrary that there is $p\in\col(\omega_1,<\kappa)$ and a name $\dot{b}$ such that
    \[
    p\Vdash\dot b\text{ is a cofinal branch in }\dot T.
    \]
    Let $M\prec H_{\kappa^+}$ be such that $V_\alpha=V_\kappa\cap M$ and $\alpha$ is an ordinal of uncountable cofinality, and such that $\dot b\in M$. Up to extending $p$, we may assume that it decides the $\alpha$-th node on the branch $\dot b$, say $\dot b(\alpha)=t$. Also, without loss of generality $p\in D(t)$. Up to extending $p\cap V_\alpha$ in $\po\cap V_\alpha$, we may assume that it decides the node $\dot\pi_p(t)$, and that there are two dictinct nodes $t^L$ and $t^R$ at some level $\bar{\alpha}<\alpha$ such that $p\cap V_\alpha$ forces that they are above $\dot\pi_p(t)$. Now by Lemma \ref{Lemma:FlexibleProjections2} there are two extensions $p^L,p^R\leq p$ such that 
    \[
    r:=p^L\cap V_\alpha=p^R\cap V_\alpha=p\cap V_\alpha,
    \]
    and such that $p^L\Vdash t^L<_{\dot T}t$ and $p^R\Vdash t^R<_{\dot T}t$. Now $r,\dot b \in M$. By elementarity, there is an extension $w\leq r$ in $\po\cap M$ that decides the node $\dot b(\bar{\alpha})$, say, to be $\bar{t}$. The pointwise unions $w\cup t^L$ and $w\cup t^R$ are both conditions in $\po$. But this is a contradiction: if $\bar{t}\neq t^L$, then $w\cup p^L$ cannot be a condition since it would force both $\bar{t}<_{\dot T}t$ and $t^L<_{\dot T}t$, and similarly, if $\bar{t}\neq t^R$, then $w\cup p^R$ cannot be a condition.
\end{proof}

Our forcing iteration for embedding trees on $\kappa$ into $T$ will have to deal with $\kappa^+$ many trees $\la S_\eta \mid \eta<\kappa^+\ra$. Ideally, each $S_\eta$ would have its own part of $T$ into which it would be embedded. This cannot be accommodated because $T$ is supposed to be of size $\kappa$ and there are $\kappa^+$ many trees to embed. To deal with this, we assign to each node $t$ an ordinal $\gamma_t<\kappa$. This assignment will take care that, in critical places, two nodes from two different trees cannot be assigned to a node with the same label.

\section{The embedding poset}\label{Section:TheEmbeddingPoset}
We fix in $V$ a sequence of functions $\vec{\psi} = \la \psi_\tau \mid \tau < \kappa^+\ra$, such that for each $\tau$, $\psi_\tau : \kappa \to \tau$ is a bijection whenever $\tau\geq\kappa$. 

\begin{definition}
We say that a set $a \subseteq \kappa^+$ is \emph{$\alpha$-closed with respect to $\vec{\psi}$} 
for some $\alpha < \kappa$ if $\psi_\delta``\alpha \subseteq a$ for every $\delta \in a$.
\end{definition}

\noindent For example, if $M\prec H_{\kappa^{++}}$ is such that $\alpha=\kappa\cap M\in\Ord$ and $\vec{\psi}\in M$, then $\kappa^+\cap M$ is $\alpha$-closed with respect to $\vec{\psi}$.

We are going to define by induction a sequence of forcing notions $\po_\tau$ for $\tau<\kappa^+$, 
together with $\po_\tau$-names of wide trees $S_\tau \subseteq \kappa \times \kappa$ and sequences of structures $\vec{M}^\tau = \la M^\tau_\alpha : \alpha \in \dom(\vec{M}^\tau)\ra$. This is our iteration for the proof of Theorem 1.

\begin{remark}
Before giving the exact definitions, 
we give a brief informal description of $\po_\tau$. 
The first poset $\po_0$ will be equivalent to the Levy collapse forcing $\col(\omega_1,<\kappa)$. For $\delta \geq 1$, $\po_\delta$ will consists of pairs $p = \la f^p,N^p\ra$ of countable sets, so that 
$f^p = \la f^p_\gamma : \gamma \in \supp(p)\ra$ is a sequence of functions, such that $\supp(p) \subseteq \delta$ is a countable set, and for each $\gamma \in \supp(p)$, $f^p_\gamma$ is a countable partial function, which will be forced to be order preserving from the tree $S_\gamma$ to $T$. The tree (names) $S_\gamma$, $\gamma < \kappa^+$ will be chosen by a book-keeping function, which is planned to exhaust all wide trees $S$ of a certain kind. The set 
$N^p$ corresponds to the side condition part, which specifies a collection of structures $M$ for which we would like to secure the existence of strong generic conditions. 
This is realized  by having $N^p$ consist of pairs $(\alpha,a_\alpha)$ so that $\alpha < \kappa$ and $a_\alpha \subseteq \delta$ is a nonempty $\alpha$-closed set with respect to $\vec{\psi}$ of size $|a_\alpha| \leq \alpha$. 
The role of $(\alpha,a_\alpha)$ is to specify a set of $\gamma < \delta$ for which we would like $M^\gamma_\alpha \in \vec{M}^\gamma$ to have a strong generic condition (a.k.a. a master condition). For this, we add several natural requirements to the working parts $f^p_\gamma$. For example, we require that nodes $s \in \dom(f^p_\gamma) \cap M^\gamma_\alpha$ are mapped to $f^p_\gamma(s) \in T \cap M^\gamma_\alpha$.
\end{remark}

We proceed with the complete recursive definition of $\po_\tau$, $\vec{M}^\tau$, $\tau < \kappa^+$.  The definition will be given in steps, and require introducing a number of auxiliary definitions and notations. 
The auxiliary definitions will be extensively used to prove that $\po_\tau$ has the desired properties. \\

\noindent
To start, we fix in advance a well-order $<_{H_{\kappa^{++}}}$ of $H_{\kappa^{++}}$, as well as a book-keeping function $\Psi$ whose domain is the set of all posets $\po \in H_{\kappa^{++}}$ which preserve $\kappa$ and satisfy $\kappa^+$.c.c, and $\Psi(\po)$ is a $\po$-name of (wide) tree $S$ on $\kappa$ whose domain is $\kappa \times \kappa$.

\begin{definition}[$\po_0$]\label{de:P0}
The first poset $\po_0$ is equivalent to the Levy collapse forcing $\col(\omega_1,<\kappa)$. 
Formally, it consists of pairs $p = \la f^p,N^p\ra$ where $f^p = \la f^p_0\ra$ is a sequence with an element $f^p_0 \in \col(\omega_1,<\kappa)$, and $N^p = \emptyset$.
\end{definition}

Let $\tau<\kappa^+$ and suppose that $\po_\delta$ has been defined for every $\delta < \tau$. 
Before defining $\po_\tau$ we 
list seven inductive assumptions for $\po_\delta$, $\delta < \tau$.

\vspace{8pt}

\noindent\textbf{Inductive Assumption I: } For every $\delta < \tau$, $\po_\delta$ is a $\sigma$-closed poset of size $|\po_\delta| = \kappa$, and $\po_\delta \in H_{\kappa^{++}}$.

\vspace{8pt}

For each $\gamma < \tau$ let $\name{S}_\gamma = \Psi(\po_\gamma)$ denote the $\po_\gamma$-name for a wide tree with domain $\kappa \times \kappa$ chosen by a fixed book-keeping function $\Psi$.
Let \[
\mathcal{A}^\gamma = \la H_{\kappa^{++}},\in, <_{H_{\kappa^{++}}},\gamma,\Psi,\po_{\gamma}, \vec{\psi} \ra.
\] Note that $S_\gamma = \Psi(\po_\gamma)$ is definable in this structure.
Let \[\vec{M}^\gamma = \la M^\gamma_\alpha : \alpha \in \dom(\vec{M}^\gamma)\ra\] be the associated sequence of $\Pi_1^1$-reflecting elementary substructures of $\mathcal{A}^\gamma$ from Definition \ref{Def:ReflectingSequenceofModels}.
As mentioned after the definition, $\dom(\vec{M}^\gamma)$ belongs to the weakly compact filter on $\kappa$, $\fwc$.

\begin{lemma}
    If $M^\delta_\alpha\in\vec{M}^\delta$ and $\gamma\in\delta\cap M^\delta_\alpha$, then $M^\gamma_\alpha\in\vec{M}^\gamma$ and $M^\gamma_\alpha\subseteq M^\delta_\alpha$.
\end{lemma}
\begin{proof}
    The poset $\po_\gamma$ is definable from $\po_\delta$ and $\gamma$ if $\gamma<\delta$, so if $\gamma\in\delta\in M^\delta_\alpha$, then $M^\gamma_\alpha\subseteq M^\delta_\alpha$. The fact that $M^\gamma_\alpha\cap V_\kappa=V_\alpha$ follows too, if $x\in V_\alpha-M^\gamma_\alpha$, then $x$ is definable from the $<_{\kappa^{++}}$-minimal bijection $\psi:\kappa\to V_\kappa$ that is an element in both $M^\delta_\alpha$ and $M^\gamma_\alpha$ and that must map $\alpha$ to $V_\alpha$ by reflectivity of $M^\delta_\alpha$. The reflectivity follows from the reflectivity of $M^\delta_\alpha$ using the facts that $M^\gamma_\alpha\subseteq M^\delta_\alpha$ and $M^\gamma_\alpha\cap V_\kappa=V_\alpha$.
\end{proof}

\vspace{8pt}

\noindent\textbf{Inductive Assumption II: } For every $\delta < \tau$ and $p \in \po_\delta$, $p$ is of the form $\la f^p,N^p\ra$ where
\begin{enumerate}
    \item $f^p = \la f^p_\gamma : \gamma \in \supp(p)\ra$, a sequence whose domain  $\supp(p) \subseteq \delta$  (called the \emph{support of $p$}) is a countable set so that
    \begin{itemize}
    \item $0 \in \supp(p)$ and $f^p_0 \in \col(\omega_1,<\kappa)$,
    \item for each positive ordinal $\gamma \in \supp(p)$, $f^p_\gamma : \kappa \times \kappa \to T$ is a partial countable function.
    \end{itemize}
    
    \item $N^p$ is a countable set of pairs of the form $(\alpha,a_\alpha)$, where $\alpha<\kappa$ and $a_\alpha \subseteq \delta$ is a nonempty $\alpha$-closed set (w.r.t $\vec{\psi}$) of size $|a_\alpha| \leq \alpha$. For each $\alpha$ there is at most one pair $(\alpha,a_\alpha)$ in $N^p$. 
    
\end{enumerate}

\noindent There is a natural projection operator between the posets:

\begin{definition}
For every $\delta < \delta^*$ and $p \in \po_{\delta^*}$, define $p\uhr \delta = \la f^{p\uhr \delta},N^{p\uhr \delta}\ra$ by 
$$f^{p\uhr\delta} = \la f^p_\gamma : \gamma \in \supp(p) \cap \delta\ra, $$ 
and 
$$
N^{p\uhr\delta} = \{ (\alpha,a_\alpha \cap \delta) : (\alpha,a_\alpha) \in N^p \text{ and } a_\alpha \cap \delta \neq \emptyset\}.
$$
\end{definition}

\noindent It will follow from Inductive Assumption III below that if $\delta'<\delta$ and $p\in\po_\delta$, then $p\rest\delta'\in\po_{\delta'}$.

\begin{definition}${}$

\begin{enumerate}
    \item 
    Let $p \in \po_\delta$, $\delta' < \delta$ and $M^{\delta'}_\alpha \in \vec{M}^{\delta'}$. 
    We say that $M^{\delta'}_\alpha$ \emph{appears} in $p$ (at coordinate $\delta'$)
    if $\delta'  \in a_\alpha$ where $(\alpha,a_\alpha) \in N^p$.

\item We say that a condition $p \in \po_\delta$ is \emph{amenable} to a structure $M^\delta_\alpha \in \vec{M}^
\delta$ if for every $\gamma \in M^\delta_\alpha \cap \delta$, the model $M^{\gamma}_\alpha$ appears in $p$.

\item For $\delta<\kappa^+$, let $\vec{N}^\delta$ be the sequence that consists of $\Pi^1_1$-reflecting elementary substructures of the expanded structure $(\mathcal{A}^\delta,\vec{M}^\delta)$ from Definition \ref{Def:ReflectingSequenceofModels}. We say that $M^\delta_\alpha$ is \emph{$\vec{M}^\delta$-reflective} if $\alpha$ is in the domain of $\vec{N}^\delta$.


    \end{enumerate}
    \end{definition}
 
\begin{remark}
    A condition $p$ is amenable to $M^\delta_\alpha$ if and only if $\delta\cap M^\delta_\alpha\subseteq a_\alpha$ where $(\alpha,a_\alpha) \in N^p$. 
\end{remark}

\begin{remark}\label{Remark:ReflectiveStructuresAreLimits}${ }$

\begin{enumerate}
    \item The domain of the sequence $\vec{N}^\delta$ of $\vec{M}^\delta$-reflective structures is a subset of the domain of $\vec{M}^\delta$. Furthermore, $\delta\cap M^\delta_\alpha=\delta\cap N^\delta_\alpha$ for each $\alpha\in\dom(\vec{N}^\delta)$. This follows from the fact that both structures are Skolem hulls: if $\gamma\in\delta\cap N^\delta_\alpha$, then $\gamma$ is definable from the $<_{\kappa^++}$-least bijection $\psi:\delta\to\kappa$ and an ordinal below $\alpha$, which belong to both $N^\delta_\alpha$ and $M^\delta_\alpha$.
    \item Since the domain of the sequence $\vec{M}^\delta$ is unbounded in $\kappa$ we get that if $M^\delta_\alpha$ is $\vec{M}^\delta$-reflective then $|\vec{M}^\delta\uhr \alpha| = \alpha$. In particular, $M^\delta_\alpha$ is a limit point of the sequence $\vec{M}^\delta$.
\end{enumerate} 
\end{remark}

\noindent For a condition $p\in\po_\delta$ and $\delta'<\delta$, there are two associated sequences of side conditions:
\begin{definition}\label{de:EN} Let $p\in\po_\delta$ be a condition and ${\delta'}<\delta$.

    \begin{enumerate}
    \item $N^p_{\delta'}:=\{\alpha<\kappa:M^{\delta'}_\alpha\text{ appears in }p\}$,
    \item $E^p_{\delta'}:=\{\alpha<\kappa:p\rest{\delta'}\text{ is amenable to }M^{\delta'}_\alpha\}$.
\end{enumerate}
\end{definition}

\noindent If $M^{\delta'}_\alpha$ appears in $p$, then $p\rest{\delta'}$ is amenable to $M^{\delta'}_\alpha$, so $N^p_{\delta'}\subseteq E^p_{\delta'}$. Note that $E^p_{\delta}$ makes sense for $p\in\po_\delta$. It will hold that the generic sequence $N^G_{\delta'}=\bigcup_{p\in G}N^p_{\delta'}$ consists of limit points of the generic sequence $E^G_\delta=\bigcup_{p\in G}E^p_{\delta'}$, for every generic $G\subseteq\po_\delta$ and $\delta'<\delta$. See Inductive Assumption III below.

\begin{definition}[$\vec{M}^{\delta,\name{G}(\po_\delta)}$]
For each $\delta < \tau$ let $\vec{M}^{\delta,\name{G}(\po_\delta)}$ be the $\po_\delta$-name of the sub-sequence of $\vec{M}^\delta$ consisting of all $M^\delta_\alpha \in \vec{M}^\delta$ which are $\vec{M}^\delta$-reflective and for which there is an $M^\delta_\alpha$-amenable condition $p'$ in the (canonical $\po_{\delta}$-name) generic filter $\name{G}(\po_\delta)$ for $\po_\delta$.
\end{definition}

\noindent So the sequence $\vec{M}^{\delta,\name{G}(\po_\delta)}$ is a $\po_\delta$-name for a subsequence of $E^G_\delta$ from above.



\vspace{8pt}

\noindent\textbf{Inductive Assumption III: }
For all $\delta < \delta^* < \tau$ and $p \in \po_{\delta^*}$:
\begin{enumerate}
    \item $p\uhr \delta \in \po_{\delta}$.
    \item If $M^\delta_\alpha$ appears in $p$, then $p\rest\delta$ forces that $M^{\delta}_\alpha$ is a limit point of the sequence $\vec{M}^{\delta,\name{G}(\po_\delta)}$.
\end{enumerate}
In particular, if $M^\delta_\alpha$ appears in $p\in\po_{\delta^*}$, then $p \uhr \delta$ is amenable to $M^{\delta}_\alpha$.

\vspace{8pt}

\noindent\textbf{Inductive Assumption IV:}
For every $\delta < \tau$, $M^\delta_\alpha \in \vec{M}^\delta$ and $p \in \po_\delta \cap M^\delta_\alpha$, there is an extension $p' \leq p$ which is amenable to $M^\delta_\alpha$. 

\vspace{8pt}

\begin{remark}
    Suppose $\gamma<\delta$. The sequence $\vec{M}^{\gamma}$ is definable from $\po_\delta$ and $\gamma$, as the set of models that appear in conditions in $\po_{\delta}$ at coordinate $\gamma$. Thus if $M\prec\mathcal{A}^\delta$ and $\gamma\in\delta\cap M$, then the sequence $\vec{M}^{\gamma}$ is an element in $M$. In particular, if $M^\delta_\alpha$ belongs to the sequence $\vec{M}^\delta$ and $\gamma\in\delta\cap M^\delta_\alpha$, then by elementarity the domain of $\vec{M}^\gamma$ must be unbounded in $\alpha=\kappa\cap M^\delta_\alpha$. So $\alpha$ is a limit point of each sequence $\vec{M}^{\gamma}$, where $\gamma\in \delta\cap M^\delta_\alpha$. By the same reasoning, if $M^\delta_\alpha$ is $\vec{M}^\delta$-reflective, then in addition it must be a limit point of the sequence $\vec{M}^\delta$ as well.
\end{remark}
    
\begin{lemma}\label{Lemma:M-sequence-is-unbounded} Let $\delta<\tau$.
\begin{enumerate}
    \item $\vec{M}^{\delta,\name{G}(\po_\delta)}$ is (forced to be) of cardinality $\kappa$.
    \item If $p$ forces some $M^\delta_\alpha$ to be in $\vec{M}^{\delta,\name{G}(\po_\delta)}$ then it forces
$M^{\delta'}_\alpha$ to be in $\vec{M}^{\delta',\name{G}(\po_{\delta}')}$
for every $\delta' \in \delta \cap M^\delta_\alpha$.
\end{enumerate}
\end{lemma}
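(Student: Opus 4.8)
The plan is to unwind the definition of the $\po_\delta$-name $\vec M^{\delta,\name{G}(\po_\delta)}$ and then read off what is needed from Inductive Assumptions~I, III and IV; beyond bookkeeping, the one genuine input is a standard auxiliary fact, which I would establish first: the set $R^\delta=\{\alpha<\kappa\mid M^\delta_\alpha\text{ is }\vec M^\delta\text{-reflective}\}$ is unbounded in $\kappa$ (in fact $\fwc$-large). This is the usual reflection argument. The clause $M^\delta_\alpha\cap V_\kappa=V_\alpha$ together with the $\Pi_1^1$-reflection clause of Definition~\ref{Def:ReflectingSequenceofModels} is first-order over $(H_{\kappa^{++}},\in,<_{H_{\kappa^{++}}})$, and the Skolem hulls defining the $M^\delta_\alpha$ are likewise definable from the fixed well-order, so $\dom(\vec M^\delta)$ and $\alpha\mapsto M^\delta_\alpha$ are definable over $\mathcal A^\delta$; hence for club-many $\alpha$ one finds $N\elem(H_{\kappa^{++}},\mathcal A^\delta,\vec M^\delta)$ with $N\cap\kappa=\alpha$ and $N\cap\mathcal A^\delta=M^\delta_\alpha$, and intersecting with $\dom(\vec M^\delta)\in\fwc$ yields $R^\delta$ unbounded.

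For part~(1), I would reduce to the density, for each $\beta<\kappa$, of the set $E_\beta$ of conditions $p\in\po_\delta$ amenable to some $M^\delta_\alpha$ with $\alpha\in R^\delta$ and $\alpha>\beta$. Granting this, a generic $G$ meets every $E_\beta$, and by the definition of the name each $p\in G\cap E_\beta$ places the witnessing structure $M^\delta_\alpha$ in $\vec M^{\delta,\name{G}(\po_\delta)}$ (it is reflective by choice of $\alpha$, and $p\in G$ is amenable to it); thus the name is forced to index an unbounded subset of $\kappa$, hence to have cardinality $\kappa$. To show $E_\beta$ is dense, fix $q\in\po_\delta$; by Inductive Assumption~I, $\po_\delta\in\mathcal A^\delta$ has cardinality $\kappa$, so fix an $\mathcal A^\delta$-definable enumeration $\langle q_\nu\mid\nu<\kappa\rangle$ of $\po_\delta$, let $q=q_{\nu_0}$, and pick $\alpha\in R^\delta$ with $\alpha>\max(\beta,\nu_0)$; then $q\in M^\delta_\alpha$ since $M^\delta_\alpha\cap\kappa=\alpha$ and the enumeration is computed in $M^\delta_\alpha$. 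Inductive Assumption~IV now yields an extension $p'\le q$ amenable to $M^\delta_\alpha$, so $p'\in E_\beta$.

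For part~(2), suppose $p\Vdash_{\po_\delta}M^\delta_\alpha\in\vec M^{\delta,\name{G}(\po_\delta)}$ and fix $\delta'\in\delta\cap M^\delta_\alpha$. Let $G\subseteq\po_\delta$ be generic with $p\in G$, and let $G'$ be the $\po_{\delta'}$-generic induced by $G$, i.e.\ the filter generated by $\{q\uhr\delta'\mid q\in G\}$ (using that $q\mapsto q\uhr\delta'$ is a projection). By the definition of the name, the hypothesis on $p$ produces an actual $q\in G$ amenable to $M^\delta_\alpha$, say with $(\alpha,a_\alpha)\in N^q$ and $M^\delta_\alpha\cap\delta\subseteq a_\alpha$; since $\delta'\in M^\delta_\alpha\cap\delta$ we get $\delta'\in a_\alpha$, so $M^{\delta'}_\alpha$ appears in $q$. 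Applying Inductive Assumption~III to $q\in\po_\delta$, with $\delta'<\delta$ as its index pair, we obtain that $M^{\delta'}_\alpha$ is $\vec M^{\delta'}$-reflective and that $q\uhr\delta'\in\po_{\delta'}$ is amenable to $M^{\delta'}_\alpha$. Since $q\uhr\delta'\in G'$, both requirements in the definition of $\vec M^{\delta',\name{G}(\po_{\delta'})}$ hold of $M^{\delta'}_\alpha$ in $V[G]$, so $M^{\delta'}_\alpha\in\vec M^{\delta',\name{G}(\po_{\delta'})}$; as $G\ni p$ was arbitrary, $p$ forces this.

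The logical skeleton above is short, so the step I expect to demand the most care is the auxiliary claim that $R^\delta$ is unbounded, where one must be precise about the definability of $\vec M^\delta$ over $\mathcal A^\delta$ and about exactly how $M^\delta_\alpha$ is viewed as a substructure of $\mathcal A^\delta$ (the parameter $\theta$ of Definition~\ref{Def:ReflectingSequenceofModels} versus $\kappa^{++}$). Secondary points, used silently above and best recorded once the full definition of $\po_\tau$ is fixed, are that $p\mapsto p\uhr\delta'$ is a projection of $\po_\delta$ onto $\po_{\delta'}$ (so $G'$ really is $\po_{\delta'}$-generic), that a legitimate side-condition entry $(\alpha,a_\alpha)$ with $\delta'\in a_\alpha$ forces $\alpha\in\dom(\vec M^{\delta'})$ (so ``appears'' makes sense), and that amenability to a fixed structure is downward preserved along $\le$ — all immediate from the ordering together with Inductive Assumption~III.
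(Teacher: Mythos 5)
Your proposal is correct and takes essentially the same approach as the paper: part (2) is exactly the paper's argument (amenable $q\in G$ gives $\delta'\in a_\alpha$, hence $M^{\delta'}_\alpha$ appears in $q$, and Inductive Assumption~III gives reflectivity and amenability of $q\uhr\delta'$). For part (1) the paper says only that it is ``an immediate consequence of Inductive Assumption~IV,'' leaving implicit both the auxiliary fact that $\vec M^\delta$-reflective structures occur $\fwc$-often and the density argument; you have supplied these missing details correctly rather than taken a different route.
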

\begin{proof}
    The first part is an immediate consequence of Inductive assumption IV.
    For the second item, note that for every $M^\delta_\alpha \in \vec{M}^\delta$, $\psi_\delta \in M^\delta_\alpha$ and therefore $M^\delta_\alpha \cap \delta = \psi_\delta``\alpha$. It follows that for every 
 $a$ which is $\alpha$-closed (w.r.t $\vec{\psi}$) and $\delta \in a$, one must have $\delta \cap M^\delta_\alpha \subseteq a$.
 The statement of this part now follows from Inductive Assumption III and Remark \ref{Remark:ReflectiveStructuresAreLimits}.
\end{proof}

    
    

The goal is to show that each poset $\po_\delta$, $\delta<\kappa^+$, is strongly proper with respect to the models in the sequence $\vec{M}^\delta$. This is done by showing that if $p\in\po_\delta\cap M^\delta_\alpha$, then there is an extension of $p$ which is amenable to $M^\delta_\alpha$, and whenever $p$ is amenable to $M^\delta_\alpha$, then it is a strong master condition for $M^\delta_\alpha$. Hence the following inductive assumption is natural.


\vspace{8pt}

\noindent\textbf{Inductive Assumption V:}
The following requirements hold for every $\delta' < \delta < \tau$ and $p \in \po_\delta$ 
with $\delta' \in \supp(p)$:
\begin{enumerate}
    \item For any $s_1, s_2 \in \dom(f^p_{\delta'})$ there is some $s \in \dom(f^p_{\delta'})$ which is forced by $p\uhr \delta'$ to be the meet of $s_1,s_2$ in the tree order of $S_{\delta'}$, and $f^p_0 \in \coll$ forces that $f^p_{\delta'}(s)$ is the meet in $T$ of 
    $f^p_{\delta'}(s_1)$ and $f^p_{\delta'}(s_2)$.
    \item $p\rest\delta'$ forces that $f^p_{\delta'}$ is level-preserving.
\end{enumerate}

\begin{remark}
    Inductive Assumption V implies $p\uhr \delta'$ forces that $f^p_{\delta'}$ is injective and order preserving. This follows from the simple observation that a partial function $f : S \to T$ between two trees $S,T$, whose domain is closed under meets in $S$, mapping those meets to the meets of the images, is order preserving.
\end{remark}

The next inductive assumptions describe the connection/restrictions between the ``working'' parts $f^p_\delta$ in conditions $p$ and the ``side condition'' parts $M^\delta_\alpha$ that appear in $p$. We start with a brief discussion before giving the precise details.

Our main goal is to secure a strong properness property for a structure $M^\delta_\alpha$ that appears in a condition $p$. As usual, strong properness will imply that the restriction of the generic embedding $f_\delta$ to $M^\delta_\alpha$ will be generic over $M^\delta_\alpha$.
It is therefore natural to include a restriction saying that for every $s \in \dom(f^p_\delta)$, $s \in M^\delta_\alpha$ if and only if  $f^p_\delta(s) \in M^\delta_\alpha$.
Now, to secure this property while allowing every node $s' \in S_\delta$ to be added to the domain of an extension, we impose a similar requirement for branches $b_s$ of nodes $s \in \dom(f^p_\delta)$. Namely, we require that for every $(s,t) \in  f^p_\delta$, $b_s \subseteq M^\delta_\alpha$ if and only if $b_t \subseteq M^\delta_\alpha$. We recall that by the construction of our trees $T$, for every $t \in T$, the identity of $\beta < \kappa$ for which $b_t \subseteq M^\delta_\beta$ is determined from the collapse index $(\alpha',\beta',\gamma',\delta')$ of $t$ (specifically, we need $\alpha',\beta' \leq \beta$). 

We point out that the last restriction introduces the following additional complication: Say,  $(s,t) \in f_\delta^p$ are outside of a structure $M^\delta_\alpha$ which appears in $p$. By our strong properness aspirations for $M^\delta_\alpha$ we would like our ability to extend the conditions $f^p_\delta \cap M^\delta_\alpha$ inside $M^\delta_\alpha$ to be ``independent'' from considerations outside of $M^\delta_\alpha$. Because of this, we are at a risk of adding a new structure $M^\delta_\beta$ to the side condition part, when working inside $M^\delta_\alpha$ that will violate the branch requirement coming from $(s,t)$, which are outside of the structure. 
One approach to avoid such a problem will be to add a notion of excluded intervals to the side condition part (such as when adding a club to $\omega_1$ with finite side conditions, as in Baumgartner, \cite{BaumgartnerClub}) of the poset to exclude ``problematic'' structures $M^\delta_\beta$ below $M^\delta_\alpha$.
We take here a similar and slightly more implicit approach to avoid this problem: We can use the function $f^p_\delta$ to produce excluded intervals, essentially by mapping a node $s' \in M^\delta_\beta$ to $f^p_\delta(s')$ outside of $M^\delta_\beta$, but inside $M^\delta_\alpha$ so that this restriction appears when we move to $f_\delta^p \cap M^\delta_\alpha$ and excludes adding $M^\delta_\beta$ to the side condition part. The last description can be seen as a motivation for the next definition, of a refined sub-sequence $\vec{M}^{\delta,G(\po_\delta),f_\delta} \subseteq \vec{M}^{\delta,G(\po_\delta)}$ given by a partial countable function $f_\delta$.

\begin{definition}\label{Def:M^f_delta}($\vec{M}^{\delta,\name{G}(\po_\delta),f_\delta}$)\\
Let $f_\delta : S_\delta \to T_\delta$ be a countable partial function. Define a $\po_\delta$-name for a sub-sequence $\vec{M}^{\delta,\name{G}(\po_\delta),f_\delta}$ of $\vec{M}^{\delta,\name{G}(\po_\delta)}$ by having $\vec{M}^{\delta,\name{G}(\po_\delta),f_\delta}$ consists of all $M^\delta_\beta \in \vec{M}^{\delta,\name{G}(\po_\delta)}$ such that for every $(s,t) \in f_\delta$
\begin{enumerate}  
        \item $s \in M^\delta_\beta$ if and only if $t \in M^\delta_\beta$, and
        \item $s$ is an exit node from $M^\delta_\beta$ if and only if $t$ is an exit node from $M^\delta_\beta$.
\end{enumerate}
\end{definition}

\begin{definition}[$\beta_{f_{\delta}}(s)$]\label{Def:beta(s)}
    For every condition $p \in \po_\delta$, node $s \in \Lev_{\alpha}(S_{\delta})$ where $\alpha < \kappa$, and  partial countable function $f_\delta : S_\delta \to T_\delta$ there is an extension $q \leq p$ which either decides the minimal $\beta < \kappa$ such that $\beta\geq\alpha$, $b_s \subseteq \alpha \times \beta$, $s \not \in M^\delta_\beta$, and $M^{\delta}_\beta \in \vec{M}^{\delta,\name{G}(\po_{\delta}),f_{\delta}}$ or else forces that such $\beta$ does not exist. If $\beta$ exists, we denote it by $\beta_{f_{\delta}}(s)$.
\end{definition}

\noindent Suppose that $p'\in\po_{\delta^*}$ is a condition for some $\delta^*>\delta$ and $p=p^*\rest\delta$ and $f_\delta=f^{p'}_\delta$. The requirement $M^{\delta}_\beta \in \vec{M}^{\delta,\name{G}(\po_{\delta}),f_{\delta}}$ implies that the model $M^\delta_\beta$ is in some sense ``addable" to the side conditions of $p'$, the fact that it belongs to $\vec{M}^{\delta,\name{G}(\po_{\delta})}$ implies that $p$ is amenable to it and the fact that it belongs to $\vec{M}^{\delta,\name{G}(\po_{\delta}),f_{\delta}}$ implies that it is closed under the function $f_\delta$.

\begin{remark}\label{Remark:HowToValidate}
    Given a condition $p \in \po_\delta$, $\delta' < \delta$, and a node $s \in S_{\delta'}$, we can extend $p\uhr \delta'$ to determine the minimal $\beta < \kappa$ such that $b_s \subseteq \beta \times \beta$ and $s \not\in \beta \times \beta$. This makes $\beta$ a natural candidate for $\beta_{f_{\delta'}}(s)$ but not necessarily the correct one. It might happen that either $M^{\delta'}_\beta$ does not exist, or it does, but is not forced to be closed under $f_\delta$, i.e. inside the sequence $\vec{M}^{\delta,G(\po_\delta),f_{\delta'}^p}$. We thus have to further extend $p\rest\delta'$ to decide $\beta_{f_{\delta'}}(s)$ to be some $\beta'\geq\beta$.
\end{remark}

\begin{definition}\label{Def:TheSequence-M^s_n}
For every $\delta < \tau$ and $s \in S_\delta$, we define a $\po_\delta$-name of a structure $M^s_1$ to be
$M^\delta_\beta$ for the minimal $\beta$ for which $M^\delta_\beta \in \vec{M}^{\delta,\name{G}(\po_\delta)}$  and $s \in M^\delta_\beta$.
\end{definition}

\noindent It follows that $p\in\po_\delta$ is amenable to $M^1_s$, whenever it decides it. 

\begin{remark}
    It follows from Inductive Assumption IV and the fact that the sequence $\vec{M}^{\delta,\dot G(\po_\delta)}$ is forced to be cofinal in $\kappa$, that the model $M^1_s$ always exists. The model $M^1_s$ is always (forced to be) a successor point of the sequence $\vec{M}^{\delta,\name{G}(\po_\delta)}$, being the least structure in $M^{\delta,\name{G}(\po_\delta)}$ that sees the node $s$. Since the sequence $\vec{M}^{\delta,\name{G}(\po_\delta)}$ is included in the limit points of the sequence $\vec{M}^\delta$, $M^1_s$ is a limit point in the sequence $\vec{M}^\delta$ and limit point in the sequences $\vec{M}^{\gamma,\dot G(\po_\gamma)}$, for $\gamma\in\delta\cap M^1_s$.
\end{remark}

\begin{definition}[$s$-knowledgeable conditions]\label{Def:s-know}
    We say that a condition $q\in \po_{\delta}$ is \emph{$s$-knowledgeable} (or knowledgeable about $s$) for some $s \in S_{\delta}$ with respect to a partial countable function $f_{\delta} : S_{\delta} \to T$,   if
    \begin{itemize}
        \item $q$ decides the identity of $M^s_1$, 
        \item $q$ decides if $\beta_{f_{\delta}}(s)$ exists, and if so, determines its value.
    \end{itemize}
\end{definition}

\noindent\textbf{Inductive Assumption VI:}
For every  $p \in \po_\delta$, $\delta'<\delta$, every $M^{\delta'}_\alpha$ that appears in $p$ and every $s \in \dom(f_{\delta'}^p)$:
    \begin{enumerate}
        \item $s \in M^{\delta'}_\alpha$ if and only if $f^p_{\delta'}(s)  \in M^{\delta'}_\alpha$,
        \item $s$ is an exit node from $M^{\delta'}_\alpha$ if and only if  $f^p_{\delta'}(s)$ is,
        \item if $s\in\dom(f^p_{\delta'})-M^{\delta'}_\alpha$, then there is $s'\in\dom(f^p_{\delta'})$ such that $p\rest\delta'$ forces that $s'\leq_{\dot S_{\delta'}}s$ and $s'$ is an exit node from $M^{\delta'}_\alpha$.
    \item if $p\rest\delta'$ forces that $s$ is an exit node from $M^{\delta'}_\alpha$, then $p\uhr \delta'$ is $s$-knowledgeable with respect to $f_{\delta'}^p$, and if $p\uhr \delta'$ forces $\beta = \beta_{f^p_{\delta'}}(s)$ exists then $M^{\delta'}_\beta$ appears in $p$.
    \end{enumerate}


\noindent\textbf{Inductive Assumption VII:}
For every $p \in \po_\delta$ and $s \in \dom(f^p_{\delta'})$ for some $\delta' < \delta$ in the support of $p$,  if $s$ is forced by $p\rest\delta'$ to be an exit node from a model $M^{\delta'}_\alpha$ that appears in $p$, then $f^p_{\delta'}(s)$ is also forced by $p\rest\delta'$ to be an exit node from $M^{\delta'}_\alpha$ and the collapse-index $(\alpha,\beta,\gamma,\epsilon)$ of $f^p_{\delta'}(s)$ is forced by $p\uhr \delta'$ to satisfy the following requirements:
     \begin{enumerate}
         \item $\alpha$ is the level of $s$,
         \item $\beta = \beta_{f^p_{\delta'}}(s)$ if  exists, and
         \item $\gamma = M^s_1 \cap \kappa$.
     \end{enumerate}



\begin{remark}\label{Rmk:SuccessorVSLimitsStructures}
    It follows form Inductive Assumption VII that whenever $s\in \dom(f^p_{\delta'})$ is an exit node from a model that appears in $p$ at coordinate $\delta'$, then $s\in M^1_s$ and $f^p_{\delta'}(s)\notin M^1_s$. This is because if it happened that $f^p_{\delta'}(s)\in M^1_s$, then by elementarity the collapse index of $t:=f^p_{\delta'}(s)$ would also be an element in $M^1_s$, which is not possible because $\gamma_t=M^1_s\cap\kappa$.
    At first glance this may seem to be at odds with the first clause of item 2 in Inductive Assumption VI, which requires that for every $M^{\delta'}_\alpha$ which appears in $p$, if $s$ belongs to $M^{\delta'}_\alpha$ then so does $f^p_{\delta'}(s)$. 
    The two requirements are compatible as $M^1_s$ does not appear in $p$ (in other words, if $M^1_s=M^{\delta'}_\alpha$, then $\alpha$ does not belong to $N^p_{\delta'}$, i.e.  $\delta'$ does not belong to $a_\alpha$), as $M^1_s$ is a successor structure in $\vec{M}^{\delta',G(\po_{\delta'})}$ (being the first to see $s$) while the structures $M^{\delta'}_\alpha$ that are allowed to appear in $p$ are required to be limit points of the sequence $\vec{M}^{\delta',\name{G}(\po_{\delta'})}$. Rather, again writing $M^1_s=M^{\delta'}_\alpha$, the structure $M^\gamma_\alpha$ appears in $p$ for every $\gamma\in\delta'\cap M^1_s$. So in the notation of Definition \ref{de:EN}, $\alpha\in E^p_{\delta'}-N^p_{\delta'}$.
\end{remark}

We are ready to define $\po_\tau$. We split the definition between the case $\tau$ is a limit ordinal, and $\tau = \delta+1$ is a successor ordinal. Recall that $\po_0$ was defined in Definition \ref{de:P0}.

\begin{definition}[$\po_\tau$ for a limit ordinal $\tau$]
Conditions $p \in \po_\tau$ are pairs  $\la f^p,N^p\ra$ that satisfy the following requirements:
\begin{itemize}
    \item $f^p = \la f^p_\delta : \delta \in \supp(p)\ra$ is a sequence with $\supp(p) \subseteq \tau$ countable and $0 \in \supp(p)$,
    
    \item $N^p$ is a countable set of pairs $(\alpha,a)$ satisfying $\alpha < \kappa$ and $a \in [\tau]^{\leq \alpha}$ nonempty and $\alpha$-closed with respect to $\vec{\psi}$,
    
    \item For every $\delta < \tau$, $p\uhr \delta$ belongs to $\po_\delta$.
\end{itemize}

\noindent A condition $p' \in \po_\tau$ extends $p$, denoted $p' \leq p$, iff $p'\uhr\delta \leq_{\po_\delta} p\uhr\delta$ for all $\delta< \tau$.
\end{definition}

\begin{definition}[$\po_\tau$ for a successor ordinal $\tau$]
Suppose that $\tau = \delta+1$. The poset $\po_{\tau} =\po_{\delta+1}$ consists of all pairs $p = \la f^p,N^p\ra$ which satisfy the following conditions:
\begin{itemize}
    \item  $f^p = \la f^p_\delta : \delta \in \supp(p)\ra$ is a sequence with $\supp(p) \subseteq \delta+1$ countable and $0 \in \supp(p)$,
    
    \item  $N^p$ is a countable set of pairs $(\alpha,a)$ satisfying $\alpha < \kappa$ and $a \in [\tau]^{\leq \alpha}$ nonempty and $\alpha$-closed with respect to $\vec{\psi}$, and whenever $\delta \in a$, $M^\delta_\alpha \in \vec{M}^\delta$ exists,
    
    \item $p\uhr \delta$ is a condition in $\po_\delta$,
     
    \item If $\delta \in \supp(p)$ then 
    $p\uhr\delta$ forces the statements from Inductive Assumptions V,VI, and VII where $\delta'$ and $\delta$ are replaced with $\delta$ and $\tau$.
    \item If $M^\delta_\alpha$ appears in $p$, then $M^{\delta}_\alpha$ is $\vec{M}^\delta$-reflective and $p \uhr \delta$ is amenable to $M^{\delta}_\alpha$.
\end{itemize}
A condition $p' \in \po_{\delta+1}$ extends $p$ if it satisfies the following requirements:
\begin{enumerate}
\item $p'\uhr\delta \leq_{\po_\delta} p\uhr\delta$,
\item $ f^p_\delta \subseteq f^{p'}_\delta$,
\item for every $(a,\alpha) \in N^p$ there is some $a' \supseteq a$ such that $(a',\alpha) \in N^{p'}$.\footnote{Equivalently, every $M^{\delta'}_\alpha$ that appears in $p$ appears in $p'$.}
\end{enumerate}

\end{definition}

\begin{lemma}
$\po_\tau$ satisfies inductive assumptions I-VII.
\end{lemma}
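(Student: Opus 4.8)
The plan is to prove the lemma by induction on $\tau$, matching the recursive structure of the definition of $\po_\tau$, and splitting into the cases where $\tau$ is $0$, a successor, or a limit. The base case $\tau=0$ is immediate: $\po_0$ is (equivalent to) $\col(\omega_1,<\kappa)$, which is $\sigma$-closed, has size $\kappa$, and lies in $H_{\kappa^{++}}$, verifying Assumption I, while Assumptions II--VII hold vacuously since every condition has $N^p=\emptyset$ and $\supp(p)=\{0\}$. So assume $\po_\delta$ satisfies I--VII for all $\delta<\tau$; I will verify each of the seven assumptions for $\po_\tau$.

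For \textbf{Assumption I} in the successor case $\tau=\delta+1$: $\sigma$-closedness follows because a countable increasing sequence of conditions has a natural union — the supports are countable, the new coordinate $f^p_\delta$ grows by countable amounts and the structure requirements V, VI, VII are $\sigma$-closed statements forced by the already-$\sigma$-closed $\po_\delta$ below (using that $\po_\delta$ is $\sigma$-closed, any countable descending sequence of the restrictions $p_n\uhr\delta$ has a lower bound deciding all the relevant names). The size bound $|\po_\tau|=\kappa$ follows since each condition is coded by a countable object over $H_\kappa$-sized data ($\po_\delta$, the trees $T^{h_\gamma}$ restricted to countable sets, the $M^\gamma_\alpha$'s, all of which are of size $\le\kappa$ with only countably many mentioned), and $\kappa^\omega=\kappa$ in $V$; and $\po_\tau\in H_{\kappa^{++}}$ is then clear. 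For limit $\tau$ one argues similarly, using that a condition is determined by its restrictions $p\uhr\delta$ and that $\tau<\kappa^+$ so $|\tau|\le\kappa$. \textbf{Assumption II} is just a restatement of the shape of conditions in the definition of $\po_\tau$, so it is immediate. \textbf{Assumption III} (restrictions land in $\po_\delta$, appearing structures are reflective and amenable-to) follows from the explicit clause ``$p\uhr\delta\in\po_\delta$'' in the definition together with the requirement in the successor-step definition that whenever $\delta'\in a_\alpha$ the structure $M^{\delta'}_\alpha$ exists and, by $\alpha$-closure of $a_\alpha$ with respect to $\vec\psi$ (so $M^{\delta'}_\alpha\cap\delta'=\psi_{\delta'}``\alpha\subseteq a_\alpha$), the amenability condition is inherited; reflectivity of appearing structures is built into the coherence requirements and preserved by restriction.

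\textbf{Assumption IV} — the amenable-extension property — is the one I expect to be the main obstacle, and is the heart of the lemma. Given $M^\delta_\alpha\in\vec M^\delta$ and $p\in\po_\delta\cap M^\delta_\alpha$, one must produce $p'\le p$ with $M^\delta_\alpha\cap\delta\subseteq a_\alpha$ for the relevant pair in $N^{p'}$. The idea is to enlarge $N^p$ by adding, for each $\delta'\in M^\delta_\alpha\cap\delta$, the structure $M^{\delta'}_\alpha$ — legitimate since $M^\delta_\alpha\cap\delta=\psi_\delta``\alpha$ has size $\le\alpha$ and is $\alpha$-closed, and the $M^{\delta'}_\alpha$ are $\vec M^{\delta'}$-reflective because $M^\delta_\alpha$ was chosen reflective and reflection passes down — and then to \emph{repair} the working parts $f^p_{\delta'}$ so that Assumptions V, VI, VII hold with respect to the enlarged side-condition set. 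The repair uses the flexible-projection lemmas of Section 3 (Lemmas \ref{Lemma:FlexibleProjections2} and \ref{Lemma:FlexibleProjections3}) together with the fact that the $h_\gamma$ pairwise disagree almost everywhere, to rechoose the collapse indices and hence the images $f^p_{\delta'}(s)$ (in particular the ``slice'' $\beta=\beta_{f^p_{\delta'}}(s)$ and the requirement $\gamma>M^s_1\cap\kappa$) coherently, without disturbing the part of $p$ inside $M^\delta_\alpha$. The small displayed claim in the excerpt (if $M^\ast\in M$ and $s$ is an exit node from $M^\ast$ then so is $t$) is exactly the consistency fact that makes the branch/exit-node requirements of Assumption VI simultaneously satisfiable as we add structures below $M^\delta_\alpha$. \textbf{Assumptions V, VI, VII} for $\po_\tau$ then hold because in the successor case they are literally imposed as clauses in the definition of $\po_{\delta+1}$ (with $\delta'$ replaced by $\delta$) and for $\delta'<\delta$ they are inherited from $p\uhr\delta\in\po_\delta$; in the limit case all three are witnessed on each $p\uhr\delta'$, $\delta'<\tau$, by the inductive hypothesis. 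The only genuine work is checking that the definition of $\po_\tau$ is not vacuous and that these clauses are mutually compatible — which is precisely what the repair argument for Assumption IV delivers.
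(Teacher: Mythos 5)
Your proof of Assumptions I--III and V--VII matches the paper: these are indeed either immediate from the shape of conditions, or imposed verbatim as clauses in the definition of $\po_{\delta+1}$ (for the top coordinate $\delta$) and inherited via the requirement $p\uhr\delta\in\po_\delta$ (for the earlier coordinates). However, your treatment of Assumption IV contains a genuine misunderstanding, and since you yourself identify it as ``the heart of the lemma,'' this needs correcting. Assumption IV asks for an \emph{extension} $p'\le p$ that is amenable to $M^\tau_\alpha$, given $p\in\po_\tau\cap M^\tau_\alpha$. The paper's observation, which you miss, is that the hypothesis $p\in M^\tau_\alpha$ already forces every node $s\in\dom(f^p_{\delta'})$ and every image $f^p_{\delta'}(s)$ to lie in $M^\tau_\alpha\cap V_\kappa = V_\alpha$, hence in each $M^{\delta'}_\alpha$ (the models all have the same intersection $V_\alpha$ with $V_\kappa$). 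Consequently none of the coherence requirements in V, VI, or VII can be ``challenged'' by adding the new side condition $(\alpha,a)$ with $a=\psi_\tau``\alpha$: no node of $p$ is an exit node from $M^{\delta'}_\alpha$, and both membership and exit-node tests are trivially symmetric. So the required $p'$ is just $\la f^p,\,N^p\cup\{(\alpha,a)\}\ra$ — the working parts are untouched. There is nothing to repair.

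Your proposed ``repair'' via Lemmas~\ref{Lemma:FlexibleProjections2} and \ref{Lemma:FlexibleProjections3}, in which you ``rechoose the collapse indices and hence the images $f^p_{\delta'}(s)$,'' would in fact break the proof: altering images is not compatible with producing an \emph{extension} $p'\le p$, since extending in this poset means $f^p_\delta\subseteq f^{p'}_\delta$ coordinatewise. The flexible-projection machinery does play a central role elsewhere — it is the engine of the node-density argument in Lemma~\ref{Lemma:DensityMainLemma} and of the construction of $q^{\eta,1}$ in Lemma~\ref{Lemma:PropernessForSequences}, where one is adding \emph{new} pairs to the working part and therefore has freedom to choose collapse indices — but it has no place in the verification of Assumption IV, which is deliberately cheap precisely because $p$ lives entirely inside the structure being added. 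Noticing this triviality is the point of the lemma; the hard analytical work is deferred to the density and strong-properness lemmas that follow.
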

\begin{proof}${ }$\vspace{6pt}

\noindent\textbf{Assumption I}: It is clear that $|\po_\tau|$ has size $\kappa$ and is $\sigma$-closed. \\

\noindent
\textbf{Assumption II}: The structural assumptions of conditions $p \in \po_\tau$ are immediate consequences of the definition of $\po_\tau$.  \\

\noindent
\textbf{Assumption III}: Immediate from the definition of $\po_\tau$.\\

\noindent
\textbf{Assumption IV}: We need to show that if $p \in \po_\tau \cap M^\tau_\alpha$, there is an extension $p' \leq p$ which is amenable to $M^\tau_\alpha$.
Let $p' = \la f^{p'},N^{p'}\ra$ where $f^{p'} = f^p$ and 
$N^{p'} = N^p\cup\{(\alpha,a_\alpha)\}$, where $a_\alpha=\tau\cap M^{\tau}_\alpha$. Note that $p\in M^\tau_\alpha$ implies that for every $\delta<\tau$, $f^p_\delta\in M^\delta_\alpha$. This is because $f^p_\delta$ is a countable subset of $V_\alpha\subseteq M^\delta_\alpha$ and thus element of $M^\delta_\alpha$. Thus there are no exit nodes from $M^\delta_\alpha$ for any $\delta<\tau$ in the support of $p$ and Inductive Assumption VI about the side conditions is satisfied.
It follows that $p'$ is a condition.
It is clear that $p'$ is amenable to $M^\tau_\alpha$.\\

\noindent 
\textbf{Assumptions V,VI,VII:}
If $\tau$ is limit then IA V,VI,VII are immediate.
Suppose $\tau = \delta+1$ is a successor ordinal. The fact that V,VI, and VII hold at $\delta' < \delta$ follows from the fact $p\uhr \delta \in \po_{\delta}$. The fact that V,VI,VII hold at $\delta$ follows from the definition of $\po_{\delta+1}$.  
\end{proof}

\begin{remark}\label{Remark:ImagesOfMeets}
Let $p \in \po_\tau$ be a condition. Suppose that $s' \in \dom(f^p_\delta)$ for some $\delta \in \supp(p)$, and $s \in S_\delta$ are such that 

\begin{itemize}
    \item $p\uhr \delta$ decides the meet of $s,s'$ in $S_\delta$, denoted by $m(s,s') \in S_\delta$,
    \item $m(s,s') \in  \Lev_{\bar{\alpha}}(S_\delta)$ for some $\bar{\alpha} < \kappa$,
    \item the collapse part $f^p_0$ of $p$  decides the identity of the unique node below $f^p_\delta(s')$ at level $\bar{\alpha}$ to be some $\bar{t} \in \Lev_{\bar{\alpha}}(T)$. 
\end{itemize}
The assumptions do not imply that
$m(s,s')$  belongs to $\dom(f^{p'}_\delta)$ as we do not assume $s$ does. However, the image $f^q_\delta(m(s,s'))$ of $m(s,s')$ in all possible extensions $q$ of $p$ with $m(s,s') \in \dom(f^q_\delta)$ is already decided by $p$ to be $\bar{t}$. We call $\bar{t}$ the \emph{implicit image} of the meet $m(s,s')$ of $s,s'$ as determined by $p$, and denote it by $t(s,s')$ or $t_p(s,s')$. Furthermore, if we add the pair $(m(s,s'),t(s,s'))$ to $f^p_\delta$, it is easy to see that the extension of $p$ obtained this way is a condition in $\po_\tau$.
\end{remark}

\subsection*{Traces and simple amalgamations}

We end this section with definitions of operations that attempt to capture small pieces of conditions $p \in \po_\tau$, and join those pieces together (possibly pieces from different conditions). 
These are given by \emph{traces} to structures, and \emph{simple amalgamations}, respectively.
The two operations need not produce conditions in general, but as we will show, they do  under quite natural additional assumptions. 

For a condition $p \in \po_\tau$ and a structure $M = M^\tau_\alpha \in \vec{M}^\tau$ the trace of $p$ to $M$, denoted $[p]_M$, is meant to capture all the information $p$ has which is relevant to $M$.
A priori, the definition makes sense for any set $M$.
\begin{definition}\label{Def:Trace}
    Let $p \in \po_\tau$ for some $\tau < \kappa^+$ and $M \elem H_{\kappa^{++}}$.
    The \emph{trace} of $p = \la f,N\ra$ to $M$, is the pair $[p]_M = \la \bar{f},\bar{N}\ra$ where
    \begin{itemize}
    \item $\dom(\bar{f}) = M \cap \dom(f)$, 
    \item for every $\gamma \in \dom(\bar{f})$, $\bar{f}_\gamma = f_\gamma \cap M$,
    \item $\bar{N}$ consists of pairs $(\alpha,\bar{a})$ for which there is some $a$ such that $(\alpha,a) \in N$, $\alpha \in M$, and $\bar{a} = a \cap M$ is nonempty.\footnote{Since $\vec{\psi} \in M$ and $a$ is $\alpha$-closed, then so is $(\alpha,\bar{a})$.}
    \end{itemize}
\end{definition}

\noindent It is not true in general that $[p]_M$ is a condition in $\po_\tau$. For example, it is possible that the trace part $[p]_M \uhr \delta$ does not decide the meet of nodes $s_1\neq s_2$ for $s_1,s_2 \in \dom(\bar{f}_\delta)$. Our main argument below shows that when $p$ is amenable to $M$, then it has an extension $p'\leq p$ such that $[p']_M$ is a condition in $\po_\tau \cap M$.
Indeed, in the next section we introduce a property of a condition $p$ being \textit{super-nice with respect to $M$} and prove that it implies that $[p]_M$ is a condition and a residue of $p$ into $M$ (see Definitions \ref{Def:Residue} and \ref{Def:Super-Nice}). We also show that whenever $p$ is amenable to $M$, then it has an extension that it super-nice with respect to $M$. 

Next, we define the simple amalgamation operation of two conditions $p,p'$. 
Given two conditions $p,p' \in \po_\tau$, the natural attempt to find a common extension $q$ to $p,p'$ involves taking coordinate-wise unions of the ``working parts''  and the ``side condition'' parts.
The result, which need not be a condition, 
is called the \emph{simple amalgamation} of $p,p'$. 
\begin{definition}[Simple Amalgamations]
The \emph{simple amalgamation} of conditions $p = \la f,N \ra$ and $p' = \la f',N'\ra$ is the pair $\la f^*,N^*\ra$ defined by 
\begin{itemize}
    \item $\dom(f^*) = \dom(f) \cup \dom(f')$
    \item for each $\delta \in \dom(f^*)$, $f^*_\delta = f_\delta \cup f'_\delta$ (with $f_\delta$ or $f'_\delta$ taken to be empty in the case $\delta\notin\dom(f)$ or $\delta\notin\dom(f')$, respectively)
    \item $(\alpha^*,a^*) \in N^*$  if and only if there is either $a$ so that $(\alpha^*,a) \in N$ or 
    $a'$ so that $(\alpha^*,a') \in N'$, and then $a^* = a \cup a'$ (with $a$ or $a'$ taken to be empty in the case $\alpha^*\notin\dom(N)$ or $\alpha^*\notin \dom(N')$, respectively)
\end{itemize}
\end{definition}

\noindent 
The simple amalgamation $(f^*,N^*)$ need not be a condition even when $p,p'$ are compatible. 
A reason why the simple amalgamation might fail to be a condition even if $p$ and $p'$ were compatible is that it might form a condition in $\po_\delta$ up to some coordinate $\delta < \tau$ 
(this is clearly the case for $\delta = 0$) but it need not decide the meets in $S_\delta$ of nodes $s \in \dom(f^p_\delta)$ with nodes  $s' \in \dom(f^{p'}_\delta)$, or the exit node of some $s' \in \dom(f^{p'}_\delta)$ with all side condition structures $M^\delta_\alpha$ that appear in $p$, and vice versa. 
However, it is clear that if a condition $q$ is a common extension of $p,p'$ then it extends their simple amalgamation, i.e. with the above notation,
$f^*_\delta \subseteq f^q_\delta$ for every $\delta \in \dom(f^*)$, and for every $(\alpha^*,a^*) \in N^*$ there is $b \supseteq a^*$ with $(\alpha^*,b) \in N^q$. 


In a very simple case where $p' \in \po_\delta$ extends an initial segment $p\uhr \delta$ of $p$, it is an immediate consequence of the definition that the simple amalgamation is a condition. 
\begin{lemma}\label{Lemma:SimpleAmalgamationEasy}
Suppose that $p \in \po_\tau$, and $p' \in \po_\delta$ extends $p\uhr \delta$ for some $\delta < \tau$. Then the simple amalgamation of $p,p'$ belongs to $\po_\tau$
\end{lemma}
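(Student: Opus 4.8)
The plan is to verify that the simple amalgamation $q = \langle f^*, N^*\rangle$ of $p$ and $p'$ is a condition in $\po_\tau$ by checking that each defining clause in the definition of $\po_\tau$ holds, working through the coordinates $\delta < \tau$ by induction and exploiting that $p' \in \po_\delta$ extends $p\uhr\delta$. First I would observe the structural basics: $\supp(q) = \supp(p) \cup \supp(p') \subseteq \tau$ is countable, $0 \in \supp(q)$, and $N^*$ is a countable set of pairs $(\alpha,a)$ with $a \in [\tau]^{\leq\alpha}$ nonempty; $(\vec\psi,\alpha)$-closedness of $a \cup a'$ follows since a union of $(\vec\psi,\alpha)$-closed sets is $(\vec\psi,\alpha)$-closed, and the "$M^\delta_\alpha$ exists" clause is inherited from $p$ and $p'$.

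The core of the argument is that $q\uhr\delta \in \po_\delta$ for every $\delta < \tau$ (and, in the successor case $\tau = \delta+1$, additionally that $q\uhr\delta$ forces V, VI, VII at coordinate $\delta$). Here I would split by whether $\delta \leq \delta_0$ where $\delta_0$ is the ordinal with $p' \in \po_{\delta_0}$ extending $p\uhr\delta_0$ (relabel $\delta_0$ as the "$\delta$" from the statement). For $\delta \leq \delta_0$: since $p'\uhr\delta$ already extends $p\uhr\delta$ and both are conditions, the simple amalgamation of $p\uhr\delta$ and $p'\uhr\delta$ is just $p'\uhr\delta$ coordinate-wise — indeed $f^*_\gamma = f^p_\gamma \cup f^{p'}_\gamma = f^{p'}_\gamma$ because $p' \leq p\uhr\delta$ forces $f^p_\gamma \subseteq f^{p'}_\gamma$, and similarly $N^*\uhr\delta$ is absorbed into $N^{p'}$ in the sense required for a condition (every $(\alpha,a) \in N^{p\uhr\delta}$ has an extension $(\alpha,b) \in N^{p'}$, and conversely $N^{p'}$-pairs are already there). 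So $q\uhr\delta = p'\uhr\delta \in \po_\delta$ for $\delta \leq \delta_0$. For $\delta_0 < \delta < \tau$: here $p'$ contributes nothing new ($\supp(p') \subseteq \delta_0$ and $\dom(N^{p'})$-supports lie in $\delta_0$), so $q\uhr\delta$ agrees with $p\uhr\delta$ except possibly that some $a$-sets in $N^{p\uhr\delta}$ have been enlarged below $\delta_0$ — but these enlargements are exactly the $a$-sets of the condition $p'\uhr\delta_0 \leq p\uhr\delta_0$, hence legitimate. Thus $q\uhr\delta$ is an honest extension of $p\uhr\delta$ built by enlarging side-condition sets in a way already witnessed by $p'$, and no meets, exit nodes, or knowledgeability requirements (V, VI, VII) at coordinates $\geq \delta_0$ are disturbed, because the working parts $f^*_\gamma$ for $\gamma \geq \delta_0$ equal $f^p_\gamma$ and the new side-condition structures appearing are only those below $\delta_0$, which are already handled inside $p'$.

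Finally, for the extension clauses, $q \leq p$ is immediate since $f^p_\gamma \subseteq f^*_\gamma$, $N^p$-pairs are extended in $N^*$, and $q\uhr\delta \leq p\uhr\delta$ for all $\delta$; and $q \leq p'$ holds because $q\uhr\delta_0 = p'$ (coordinate-wise, as computed above) while for $\delta > \delta_0$ there is nothing from $p'$ to extend. The main obstacle I anticipate is purely bookkeeping: one must be careful that when a side-condition set $a_\alpha$ from $N^{p}$ with $\alpha$-support partly below $\delta_0$ gets unioned with the corresponding $a'_\alpha$ from $N^{p'}$, the resulting pair still satisfies all of V, VI, VII — but since $p'\uhr\delta_0 \leq p\uhr\delta_0$ is a condition in $\po_{\delta_0}$, it already witnesses that the union $a_\alpha \cup a'_\alpha$ (which is exactly $a'_\alpha$ below $\delta_0$, and $a_\alpha$ above) behaves correctly, so no genuinely new verification is needed; the point of Lemma \ref{Lemma:SimpleAmalgamationEasy} is precisely that this "trivial" amalgamation requires no coherence between the working parts because $p'$ already sits below $p\uhr\delta$.
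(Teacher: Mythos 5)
Your argument is correct, and since the paper itself gives no proof (it calls the lemma ``an immediate consequence of the definition''), what you have written is exactly the detailed unpacking of that implicit claim. The key observations you make — that for $\gamma < \delta_0$ the amalgamated coordinate equals $f^{p'}_\gamma$ and $N^{q\uhr\delta_0} = N^{p'}$ (so $q\uhr\delta_0 = p'$ literally), while for $\gamma \ge \delta_0$ the working part is $f^p_\gamma$ and no new structures $M^\gamma_\alpha$ appear because the enlargements $a'_\alpha \subseteq \delta_0$ cannot put $\gamma \ge \delta_0$ into $a^*_\alpha$ unless it was already in $a_\alpha$ — are precisely why the verification of V, VI, VII is inherited from $p$ at coordinates above $\delta_0$ and from $p'$ below. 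One small point worth making explicit (which you use implicitly): for $\delta > \delta_0$ one should note $q\uhr\delta \leq p\uhr\delta$, so that whatever $p\uhr\delta$ forces about meets, exit nodes, collapse indices, etc., is also forced by $q\uhr\delta$; this is what turns ``same working parts, same appearing structures'' into ``same requirements are satisfied.''
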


\section{Strong Properness}\label{Section:StrongProperness}

In this section, we show that $\po_\tau$ is strongly proper with respect to the structures $\vec{M}^\tau$, for $\tau<\kappa^+$. We commence by recalling the usual definition of strong properness:

\begin{definition}\label{Def:StrongProperness}
    A condition $p \in \po$ is \emph{strongly proper} with respect to a model $M \elem (H_\theta, \in,\po)$ (for some sufficiently large $\theta$) if $p$ forces that $\name{G} \cap M$ is a $V$-generic filter for $\po \cap M$. 
    We say that $\po$ is \emph{strongly proper} with respect to a collection $\mathcal{T}$ of models $M$,  if for every $M \in \mathcal{T}$ and  $q \in M \cap \po$, $q$ has an extension $p \in \po$ which is strongly proper with respect to $M$. 
\end{definition}

This definition is equivalent to the definition of a \emph{strong master condition} in Neeman and Gilton \cite{NeemanGilton2016}.
We will use the following notion of residue function to prove strong properness results for our poset. We denote $\po/p=\{q\in\po\mid q\leq p\}$.

\begin{definition}\label{Def:Residue}
Let $p \in \po$ and $M \elem (H_\theta, \in,\po)$. A \emph{residue function} for $M$ over $p$ is a function $r : D \to M\cap \po$, where $D \subseteq \po/p$ is dense below $p$ and for every $q \in D$ and $w \in M \cap \po$ such that $w \leq r(q)$, $w$ and $q$ are compatible in $\po$. 
\end{definition}

\begin{lemma}\label{Lem:ResidueImpliesStrongProper}
If $p$ and $M$ are as above, and $r : D \to \po \cap M$ is a residue function for $M$ over $p$, then $p$ is strongly proper with respect to $M$.
\end{lemma}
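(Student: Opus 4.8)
The statement to prove is Lemma~\ref{Lem:ResidueImpliesStrongProper}: if $r : D \to \po \cap M$ is a residue function for $M$ over $p$ (with $D \subseteq \po/p$ dense below $p$), then $p$ is strongly proper with respect to $M$. By Definition~\ref{Def:StrongProperness}, I must show that $p$ forces $\name{G} \cap M$ to be a $V$-generic filter for $\po \cap M$. The natural route is to verify the two defining properties of genericity: (a) $\name{G} \cap M$ meets every dense subset of $\po \cap M$ that lies in $V$ (equivalently, in $M$, since $M \elem (H_\theta,\in,\po)$ and by elementarity the dense subsets in $M$ suffice), and (b) $\name{G} \cap M$ is a filter on $\po \cap M$ — in fact the filter property is automatic since $\name{G}$ is a filter on $\po$ and $M$ is a substructure, so the real content is (a), together with the mild point that upward closure of $\name{G} \cap M$ within $\po \cap M$ also follows from $M \elem (H_\theta,\in,\po)$.

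\textbf{Main argument.} Fix a generic $G \ni p$ and a dense set $E \in M$, $E \subseteq \po \cap M$ dense in $\po \cap M$. I want $G \cap M \cap E \neq \emptyset$. Work in $V[G]$. Since $D$ is dense below $p$ and $p \in G$, pick $q \in D \cap G$. Consider $r(q) \in M \cap \po$. Now inside $M$ (using $E \in M$ and $r(q) \in M$, via elementarity of $M$), there is $w \in E$ with $w \leq r(q)$: indeed $M \models$ ``$E$ is dense in $\po \cap M$,'' so $M$ sees some $w \in E$ below $r(q)$, and such $w$ genuinely lies in $E \cap M$. By the residue property of $r$, $w$ and $q$ are compatible in $\po$; let $q' \leq w, q$. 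The issue is that $q'$ need not be in $G$. To fix this, I instead argue by a density/genericity argument over $\po$ itself: it suffices to show that the set
$$E^* = \{\, q'' \in \po \mid q'' \notin D, \text{ or } (\exists w \in E \cap M)(\, w \in \text{(the part of } \po\text{-info coded below } q''\,) \,) \,\}$$
— more cleanly, one shows that for every $q \in D$, the set $\{\, q'' \leq q \mid \exists w \in E \cap M,\ q'' \leq w \,\}$ is dense below $q$ (it is, precisely by the compatibility given by the residue property together with density of $E$ in $\po \cap M$ inside $M$), hence $G$ meets it; the witnessing $w \in E \cap M$ then lies in $G$ because $q'' \in G$ and $q'' \leq w$, so $w \in G \cap M \cap E$. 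This establishes (a). Finally, upward closure of $G \cap M$ inside $\po \cap M$: if $w \in G \cap M$ and $w \leq v \in \po \cap M$, then $v \in G$ (since $G$ is a filter) and $v \in M$, so $v \in G \cap M$; and any two $w_1,w_2 \in G \cap M$ have a lower bound in $G$, and since $M \elem (H_\theta,\in,\po)$ there is a lower bound for them already in $M$ which, by a similar density argument below a lower bound in $G$, can be found in $G \cap M$. These give that $G \cap M$ is $V$-generic for $\po \cap M$, which is the conclusion.

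\textbf{Where the obstacle lies.} The routine filter/upward-closure bookkeeping is harmless; the one place that needs care is making the density argument precise so that the witness $w$ actually ends up \emph{in} $G$ rather than merely being compatible with something in $G$. The residue property only hands us compatibility of $w$ with $q$, so the correct move is to phrase everything as: ``the set of $q'' \leq q$ below some $w \in E \cap M$ is dense below $q$,'' and then invoke genericity of $G$ below the element $q \in D \cap G$. Density of that set below $q$ is exactly where the hypotheses combine: given any $\bar q \leq q$ one applies the residue property to get a $w \in E \cap M$ (found by elementarity of $M$ using $r(q) \in M$ and $E \in M$) compatible with $\bar q$ via some common extension $q'' \leq w, \bar q$. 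I expect no genuine difficulty beyond getting this quantifier order right and being careful that $r$ is defined on a set dense \emph{below $p$} (not all of $\po$), so all the density arguments must be carried out below $p$ — which is fine since $p \in G$.
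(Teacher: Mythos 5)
Note first that the paper does not prove this lemma itself; it cites Proposition~1.7 of Neeman--Gilton \cite{NeemanGilton2016}. Your goal of supplying a self-contained argument is reasonable, and the overall structure you outline is the right one: reduce genericity of $\name{G}\cap M$ to a density argument below $p$, driven by the residue property. Two points need repair.

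First, the parenthetical claim that it is ``equivalent'' to test genericity only against dense subsets of $\po\cap M$ that lie in $M$ is both unnecessary and, as stated, not correct: a dense $E\subseteq\po\cap M$ in $V$ need not belong to $M$, and meeting only the $M$-dense sets does not by itself give $V$-genericity. Fortunately you never need this. In the main density argument you do not need $E\in M$ at all: given $q\in D$, $r(q)\in\po\cap M$, and $E\subseteq\po\cap M$ dense (in $V$), you may simply choose $w\in E$ with $w\leq r(q)$ directly. Elementarity of $M$ plays no role in producing $w$; drop that step.

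Second, your justification that $\{\,q''\leq q \mid \exists w\in E,\ q''\leq w\,\}$ is dense below $q$ (for $q\in D\cap G$) is garbled. You write ``given any $\bar q\leq q$ one applies the residue property to get $w\in E\cap M$\dots compatible with $\bar q$,'' but the residue property applied to $q$ gives compatibility of $w\leq r(q)$ with $q$, \emph{not} with an arbitrary $\bar q\leq q$. The correct move is: given $\bar q\leq q\leq p$, use density of $D$ below $p$ once more to find $q^*\in D$ with $q^*\leq\bar q$; then take $w\in E$ with $w\leq r(q^*)$, invoke the residue property \emph{at $q^*$} to get a common extension $q''\leq q^*,w$, and note $q''\leq\bar q$. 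You flagged that ``the quantifier order'' needed care --- this is precisely the spot. The rest (upward closure of $G\cap M$ in $\po\cap M$, and directedness via the dense sets $\{v\in\po\cap M\mid v\leq w_1,w_2 \text{ or } v\perp w_1 \text{ or } v\perp w_2\}$ for $w_1,w_2\in\po\cap M$) is routine and you sketch it adequately.
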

\noindent See Proposition 1.7 in \cite{NeemanGilton2016} for details.

The goal is to show that if $p\in\po_\tau$ is amenable to $M^\tau_\alpha$, then $p$ is a strong master condition for $M^\tau_\alpha$, for models $M^\delta_\alpha$ that are $\vec{M}^\delta$-reflective. Our residue function to $M$ will be given by the trace $[p]_M$ operator (see Definition \ref{Def:Trace}). 
We introduce the notion of a super-nice conditions with respect to a structure $M$, and prove that super-nice conditions are dense below conditions that are amenable to $M$, and when $p$ is super-nice with respect to $M$ then $[p]_M$ is a residue of $p$ in $M$.  

\begin{definition}\label{Def:Super-Nice}${}$
\begin{enumerate}
\item We say that $p \in \po_\tau$  \emph{nicely projects} to a structure $M^\tau_\beta$ if it satisfies the following requirements:
\begin{itemize}

    \item $p$ is amenable to $M^\tau_\beta$
    
    \item  $f^p_0 \in D(t)$ for every $t \in \bigcup_{\delta \in \supp(p)}\rng(f^p_\delta)-M^\tau_\beta$ (see Definition 
    \ref{Def:D(t)} for the $\sigma$-closed dense set $D(t)$)
    
    \item For every $\delta \in M^\tau_\beta \cap \supp(p)$ and $s \in \dom(f^p_\delta)$ which is forced by $p\uhr \delta$ to be an exit node from $M^\delta_\beta$,  then the node projection $\bar{t} = \pi_{f^{p}_0}(f^p_\delta(s))$ 
    satisfies that either 
\begin{enumerate}
\item ($\bar{t}$ has a preimage) \\
there is some $\bar{s} \in \dom(f^p_\delta) \cap M^\tau_\beta$ such that $p\uhr \delta \Vdash \bar{s} <_{S_\delta} s$ and
$f^p_\delta(\bar{s}) = \bar{t}$, or

\item ($b_{\bar{t}}$ has cofinal preimages)\\
there is a sequence of nodes $\la \bar{s}_n\ra_n \subseteq \dom(f^p_\delta) \cap M^\tau_\beta$ such that $p\uhr \delta$ forces it is increasing in $S_\delta$, bounded by $s$, and the images by $f^p_\delta$ are cofinal in $\bar{t}$.
\end{enumerate}
And moreover, if the height of $s$ is a successor-ordinal, then the immediate predecessor of $s$ belongs to $\dom(f^p_\delta)$.
\end{itemize}

\item 
Let $p \in \po_\tau$ be a condition which satisfies that for every $\delta \in \supp(p)\setminus 1$ and $s \in \dom(f^p_\delta)$, the condition $p\uhr \delta$ decides the identity of $M^1_s$. In particular, $p\uhr \delta$ is $M^1_s$-amenable.
We define $E(p,M^\tau_\alpha)$ to be the countable set of pairs $(M^\delta_\beta,\delta)$ obtained by starting with the singleton $\{(M^\tau_\alpha,\tau)\}$ and closing under the following operation:
given $(M^\delta_\beta,\delta)$ in our set and $\gamma\in\supp(p)\cap\delta\cap M^\delta_\beta$, if $s\in\dom(f^p_\gamma)$ is forced by $p\uhr\delta$ to be an exit node from $M^\gamma_\beta$, then we add $(M^1_s,\gamma)$.

\item  $p$ is \emph{super-nice with respect to $M^\tau_\alpha$}  
if it decides $M^1_s$ for every $s\in\dom(f^p_\delta)$, $\delta\in\supp(p)$, and $p\uhr \delta$ nicely projects to $M^\delta_\beta$ 
for every $(M^\delta_\beta,\delta) \in E(p,M^\tau_\alpha)$.
The set of super-nice conditions $p \in \po_\tau$ with respect to $M^\tau_\alpha$ is denoted by $D_\tau(M^\tau_\alpha)$.
\end{enumerate}
\end{definition}

\begin{remark}
It follows from the definition of $E(p,M^\tau_\alpha)$ that for every pair $(M^\delta_\beta,\delta) \in E(p,M^\tau_\alpha)$ that is not the initial one $(\tau,M^\tau_\alpha)$, the structure $M^\delta_\beta$ is forced by $p\uhr \delta$ to have form $M^1_s$ for some $s\in\dom(f^p_\delta)$ and thus to be a successor structure in the sequence $\vec{M}^{\delta,\name{G}(\po_\delta)}$. 
This observation together with the fact that if $(M^\delta_\beta,\delta)\in E(p,M^\tau_\alpha)$ and  $\delta' \in \delta\cap M^\delta_\beta$, then $M^{\delta'}_\beta$ is a limit structure in the sequence 
$\vec{M}^{\delta',\name{G}(\po_{\delta'})}$, imply that for all $(M^\delta_\beta,\delta),(M^{\delta'}_{\beta'}, \delta')\in E(p,M^\tau_\alpha)$, if $\delta'\in\delta\cap M^\delta_\beta$, then $\beta \neq \beta'$. 
\end{remark}

\begin{remark}
    It also follows that if $(\delta,M^\delta_\alpha)\in E(p,M^\tau_\alpha)$, then $E(p\rest\delta,M^\delta_\beta)\subseteq E(p,M^\tau_\alpha)$.
\end{remark}

\noindent The set $E(p,M^\tau_\alpha)$ can be understood through the concept of simple ``paths":

\begin{definition}\label{de:path}
    Let $\gamma<\tau<\kappa^+$ and $\alpha<\beta<\kappa$. A \emph{path from $M^\tau_\alpha$ to $M^\gamma_\beta$} is a finite sequence $\la (M^{\gamma_i}_{\beta_i},\gamma_i)\mid {i\leq n}\ra$ such that $(M^{\gamma_0}_{\beta_0},\gamma_0)=(M^\tau_\alpha,\tau)$ and $(M^{\gamma_n}_{\beta_n},\gamma_n)=(M^\gamma_\beta,\gamma)$ and for every $k<n$:
    \begin{enumerate}
        \item $\gamma_{k+1}\in\gamma_k\cap M^{\gamma_k}_{\beta_k}$,
        \item $\beta_{k+1}>\beta_k$.
    \end{enumerate}
    A path is said to be \textit{in $p$}, for $p\in \po_\tau$, if $\gamma_k\in\supp(p)\cup\{\tau\}$ and $\beta_k\in E^p_{\gamma_k}$, using notation from \ref{de:EN}.
\end{definition}
\noindent The set $E(p,M^\tau_\alpha)$ is included in the closure of the singleton $\{(M^\tau_\alpha,\tau)\}$ under paths from $M^\tau_\alpha$.

Building on Lemma \ref{Lemma:FlexibleProjections2}, we obtain a lemma according to which, given a condition $p$ that is super-nice with respect to $M^\tau_\alpha$, it is possible to extend $p$ to modify the information about the branches below certain nodes in $T$ while preserving super-niceness of $p$.

\begin{lemma}\label{lem:freem}
    Let $\tau<\kappa^+$. Suppose that $p\in\po_\tau$ is super-nice with respect to $M^\tau_\alpha$ and $t_n$, $n<\omega$, are countably nodes in $T$ with $\gamma_{t_n}\leq\alpha$ for each $n<\omega$, and $\bar{t}_n$, $n<\omega$, are countably many nodes in $T\cap V_\alpha$ such that $\bar{t}_n\in\alpha_n\times\beta_n$ (See Convention \ref{conv:collapseindex} for $\alpha_{t_n}$, $\beta_{t_n}$).
    Then for any $\sigma:\omega\to 2$ there is a condition $p'\leq p$ such that $p'$ is super-nice with respect to $M^\tau_\alpha$, $[p']_{M^\tau_\alpha}=[p]_{M^\tau_\alpha}$ and any extension of $p'$ that forces $\dot\pi_p(t_n)\leq\bar{t}_n$ must also force
    \[
    \bar{t}_n<^{\sigma(i)}_{\dot T}t_n,
    \]
    where $<^0_{\dot T}$ is $\not<_{\dot T}$ and $<^1_{\dot T}$ is $<_{\dot T}$, for every $n<\omega$. Furthermore, the extension $p'\leq p$ is minimal in the sense that only the coordinate $f^p_0$ was extended, and for any $t\in T-(V_\alpha\cup\{t_n:n<\omega\})$,
    \[
    \dot\pi^{p'}(t)=\dot\pi^p(t).
    \]
\end{lemma}
\begin{proof}
    By Lemma \ref{Lemma:FlexibleProjections2} there is a collapse condition $g\in \col(\omega_1,<\kappa)$ that extends $f^p_0$, forces the wanted conclusion, and satisfies the minimality condition that $g\cap V_\alpha=f^p_0\cap V_\alpha$ and for any $t\in T-(V_\alpha\cup\{t_n:n<\omega\})$, 
    \[
    \dot\pi^g(t)=\dot\pi^{f^p_0}(t).
    \]
    It exists by the definition of the tree $\dot T$. Let $p'$ be obtained from $p$ by replacing $f^p_0$ by $g$. 
    
    Now clearly we still have $[p]_{M^\tau_\alpha}=[p']_{M^\tau_\alpha}$. We claim that $p'$ is super-nice with respect to $M^\tau_\alpha$.

    To show super-niceness, suppose towards contradiction that $p'$ is not super-nice with respect to $M^\tau_\alpha$. Since $p$ is super-nice with respect to $M^\tau_\alpha$, this implies that we modified the projection below a node that is relevant for super-niceness. Specifically, there is a path
    \[
    (M^{\delta_0}_{\beta_0},\delta_0),\dots,(M^{\delta_n}_{\beta_n},\delta_n)
    \]
    in $p'$ such that $\delta_0=\tau$ and $\beta_0=\alpha$, and such that for some $\xi\in\delta_n\cap M^{\delta_n}_{\beta_n}$ and pair of exit nodes $(s,t)\in f^p_\xi$ from $M^{\xi}_{\beta_n}$, we have
    \[
    \dot\pi^{p'}(t)\neq\dot\pi^p(t).
    \]
    Note that the path is also in $p$ since only the collapse condition was extended. By the minimality consition, we have $t=t_n$ for some $n$. In particular, $\gamma_{t}=\gamma_{t_n}\leq\alpha$. By definition of the poset, we have $s\in V_{\gamma_t}$. Then: $s\in M^1_s\cap V_\kappa= V_{\gamma_t}\subseteq V_\alpha= V_{\beta_0}\subseteq V_{\beta_n}$. This contradicts the assumption that $s$ was exit node from $M^\xi_{\beta_n}$. Hence $p'$ must be super-nice with respect to $M^\tau_\alpha$.

\end{proof}

\begin{lemma}\label{Lemma:DensityMainLemma}
    Let $\tau<\kappa^+$. Suppose that for every $\delta < \tau$ and for every $M^\delta_\alpha$ that is $\vec{M}^\delta$-reflective, the following holds:
    \begin{itemize}
        \item The set $D_\delta(M^\delta_\alpha)$ is $\sigma$-closed and dense below every condition amenable to $M^\delta_\alpha$.
        \item For every $p \in D_\delta(M^\delta_\alpha)$, the trace $[p]_{M^\delta_\alpha}$ is a condition in $\po_\delta \cap M^\delta_\alpha$ and a residue of $p$ into $M^\delta_\alpha$.
    \end{itemize}
    Then:
    \begin{enumerate}
        \item \emph{(Node density)} For every $p \in \po_\tau$, $\delta < \tau$,  and $s \in S_\delta$, there is an extension $p' \leq p$ with $s \in \dom(f^p_\delta)$.
        \item \emph{(Super-nice density)} For every $M^\tau_\alpha \in \vec{M}^\tau$, the set $D_\tau(M^\tau_\alpha)$ is $\sigma$-closed and dense below every condition that is amenable to $M^\tau_\alpha$.
        \item For every condition $p \in D_\tau(M^\tau_\alpha)$, the trace $[p]_{M^\tau_\alpha}$ is a condition in $\po_\tau \cap M^\tau_\alpha$. 
    \end{enumerate}
\end{lemma}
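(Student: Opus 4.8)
The plan is to prove all three items simultaneously by induction on $\tau$, since the three hypotheses at level $\delta < \tau$ feed into all three conclusions at level $\tau$, and because item 1 (Node Density) at level $\tau$ is needed when establishing items 2 and 3. The core of the work is a careful extension argument: given a condition $p$ amenable to $M = M^\tau_\alpha$, I want to extend $p$ to a super-nice $p'$ so that (a) all the ``exit node has a preimage'' clauses of ``nicely projects'' hold for each relevant structure in $E(p',M)$, and (b) the collapse part $f^{p'}_0$ lands in the $\sigma$-closed dense sets $D(t)$ for all $t$ in the range. For (b) I invoke the density and $\sigma$-closure of the sets $D(t)$ from the lemma following Definition~\ref{Def:D(t)}. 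For (a), the essential tool is the flexible-projection machinery: Lemmas~\ref{Lemma:FlexibleProjections2} and especially \ref{Lemma:FlexibleProjections3}, which let me, given a countable family of exit nodes $t_n = f^{p}_\delta(s_n)$ out of pairwise-almost-disjoint trees $T^{h_\delta}$, simultaneously push down the collapse-induced projections $\pi_{f_0}(t_n)$ to prescribed nodes inside the structures — here the prescribed targets are the images $f^p_\delta(\bar s_n)$ of preimages $\bar s_n <_{S_\delta} s_n$ that were added to the domain using Node Density (item 1) at the earlier coordinate $\delta < \tau$. The generic-independence of the local branches $b_t$ built into the construction of $T^h$ is exactly what makes this family of local modifications of $f_0$ mutually consistent, so they can be performed in one condition.

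The second ingredient is to handle the inductive passage through $\tau$ for item 3, i.e. that $[p]_M$ is actually a condition in $\po_\tau \cap M$. Here I proceed by a sub-induction on $\delta \le \tau$ showing $[p\restriction\delta]_M = [p]_M\restriction\delta \in \po_\delta \cap M$, using at successor steps the inductive hypothesis bullet ``$[q]_{M^\delta_\alpha}\in\po_\delta\cap M^\delta_\alpha$ for $q\in D_\delta(M^\delta_\alpha)$'' together with elementarity of $M$: each structure $M^\delta_\beta$ appearing in $p$ with $\delta\in M\cap\tau$ is, by amenability of $p$ to $M$, already reflected into $M$, and the trace of the working part $f^p_\delta\cap M$ has its meets and exit-node data decided because $p$ nicely projects to the relevant $M^\delta_\beta$ (so for $s_1,s_2\in\dom(f^p_\delta)\cap M$ the meet and its implicit image, in the sense of Remark~\ref{Remark:ImagesOfMeets}, already lie in $M$). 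Closure of $\dom(\bar f_\delta)$ under meets inside $M$ is the main point to verify, and it follows from the ``exit node has a preimage / cofinal preimages'' clauses: the preimage witnesses, being in $M^\tau_\beta$ and below the exit node, ensure the relevant meets are computed inside $M$. Elementarity then gives $[p]_M\in M$.

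For the $\sigma$-closure of $D_\tau(M^\tau_\alpha)$ in item 2: given a decreasing $\omega$-sequence $\langle p_n\rangle$ of super-nice conditions, the limit $p_\omega$ (coordinatewise unions of working parts and side conditions, which is a condition by $\sigma$-closure of $\po_\tau$) is still super-nice — this is precisely why clause (b)(ii) (``$b_{\bar t}$ has cofinal preimages'') was included in the definition of ``nicely projects'': at the limit, a node that is an exit node of $M^\delta_\beta$ may acquire a projection $\bar t$ that is a limit of the projections along the sequence, so it need not have a single preimage, but the union of the preimages along the sequence is cofinal in it. One checks $E(p_\omega, M^\tau_\alpha) = \bigcup_n E(p_n,M^\tau_\alpha)$ (this set is already countable and stabilizes appropriately) and that nice projection is preserved coordinatewise.

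The main obstacle I expect is the extension step for item 2 — producing, from an arbitrary amenable $p$, a super-nice extension — because $E(p',M^\tau_\alpha)$ can grow as one adds nodes and structures: adding a preimage $\bar s$ for an exit node may create new exit nodes of new structures $M^{\bar s}_1$, requiring a further round of preimage-adding, and one must show this process closes off after countably many steps without the supports or the side-condition sets blowing up. Controlling this requires a bookkeeping argument interleaving Node Density (item 1 at coordinates $<\tau$, available by induction), the flexible-projection lemmas, and the fact that each $M^\delta_\beta$ entering $E(p',M^\tau_\alpha)$ is a \emph{successor} structure in $\vec M^{\delta,\name G(\po_\delta)}$ (noted in the Remark after Definition~\ref{Def:Super-Nice}), so the relevant $\beta$'s are distinct and the recursion is well-founded; combined with $\sigma$-closure of $\po_\tau$ this lets the countably many extension steps be assembled into a single condition.
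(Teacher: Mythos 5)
Your proposal sketches the shape of items 2 and 3 reasonably well, but it never actually proves item 1 (Node Density), and this is both the logical pivot and the bulk of the paper's argument. You treat Node Density as something that can be cited — ``item 1 at coordinates $<\tau$, available by induction'' — but it is not free. The lemma's conclusions are all at level $\tau$, and its hypotheses are only the two bullet points about $D_\delta(M^\delta_\alpha)$ for $\delta<\tau$. Trying to reduce Node Density for $\po_\tau$ to Node Density for $\po_{\delta+1}$ by amalgamation (Lemma~\ref{Lemma:SimpleAmalgamationEasy}) does not eliminate the need for a direct argument in the ``diagonal'' case $\tau=\delta+1$: to put $s\in S_\delta$ into $\dom(f^{p'}_\delta)$ for $p\in\po_{\delta+1}$ one cannot reduce further, and this is precisely where the real work is.

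The paper's proof of item 1 is the part you would need to supply. It uses, in order: the $\vec M^\delta$-reflectivity of $M^\delta_\alpha$ (i.e.\ the existence of $N^\delta_\alpha\elem(H_{\kappa^{++}},\mathcal A^\delta,\vec M^\delta)$) together with Lemma~\ref{Lemma:M-sequence-is-unbounded} inside $N^\delta_\alpha$, to find a $w\in\po_\delta\cap N^\delta_\alpha$ below $[p\uhr\delta]_{M^\delta_\alpha}$ that decides the identity of $M^s_1$; then the lemma's second bullet (the compatibility/residue hypothesis) to merge $w$ with $p\uhr\delta$; then Lemma~\ref{Lemma:SimpleAmalgamationEasy} to pull that back into $\po_\tau$; then further extensions deciding $\beta_{f^p_\delta}(s)$, the meets $m(s,s')$, their implicit images $t(s,s')$, and the exit nodes, plus adding $M^\delta_\beta$ to the side-condition part; then a choice of a fresh collapse index $(\alpha^*,\beta^*,\gamma^*,\tau^*)$ with $\gamma^*>M^s_1\cap\kappa$ about which $q'$ provides no information, followed by Lemma~\ref{Lemma:FlexibleProjections2} to route $b_t$ through the $t(s,s')$'s and the required exit levels. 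None of this is present in your proposal; you jump straight to the ``given Node Density, fill in preimages'' stage, which is item 2. The flexible-projection machinery you invoke (Lemmas~\ref{Lemma:FlexibleProjections2},~\ref{Lemma:FlexibleProjections3}) is indeed the tool, but wiring it up with the reflectivity claim, the hypothesis bullets, and the simple amalgamation is the substance of item 1, and it is missing.

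A smaller issue: for item 3 you argue meet-closure of $\dom(\bar f_\delta)$ inside $M$ from the ``exit node has a preimage / cofinal preimages'' clauses. That does not, on its own, show that $[p\uhr\delta]_{M^\delta_\alpha}$ decides the meets and implicit images of pairs $s_1,s_2\in\dom(f^p_\delta)\cap M$; ``nicely projects'' controls exit nodes, not arbitrary meets. The paper instead deduces from $p\uhr\delta\in D_\delta(M^\delta_\alpha)$ and the hypothesis bullet that $\po_\delta\cap M^{\delta+1}_\alpha$ is a regular subforcing of $\po_\delta/(p\uhr\delta)$ (via the residue-implies-strong-properness lemma), and then argues by contradiction: if the trace failed to decide some such datum, there would exist a $w\in\po_\delta\cap M^{\delta+1}_\alpha$ below $[p\uhr\delta]_{M^\delta_\alpha}$ deciding it contrarily, which would be incompatible with $p\uhr\delta$. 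You should replace your elementarity/nicely-projects argument with this regularity argument (or show why your version is actually equivalent). Your description of the $\sigma$-closure of $D_\tau(M^\tau_\alpha)$ and of the role of clause (b)(ii) is correct and matches the paper's intent.
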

\begin{proof}
Starting with the node density assertion,  let $p \in \po_\tau$ and $s \in S_\delta$, $\delta < \tau$. We would like to extend $p$ and add $s$ to $\dom(f^p_\delta)$. 
Up to extending $p\rest\delta$, we may assume that it decides the following information:
\begin{itemize}
    \item Meets in the set $\dom(f^p_\delta)\cup\{s\}$ and their implicit images (see Remark \ref{Remark:ImagesOfMeets}).
    \item For every $\alpha\in N^p_\delta$ with $s\notin M^\delta_\alpha$, the node $\bar{s}_\alpha$ that is below $s$ and an exit node from $M^\delta_\alpha$ as well as the identity of $M^1_{\bar{s}_\alpha}\in\vec{M}^{\delta,\dot G(\po_\delta)}$.
\end{itemize}
By Remark \ref{Remark:ImagesOfMeets}, we may assume that the meet $s'\wedge s$ is already in the domain of $f^p_\delta$ for every $s'\in\dom(f^p_\delta)$. We need to find images $t$ and $\bar{t}_\alpha$ for the nodes $s$ and $\bar{s}_\alpha$, respectively, where $\alpha\in N^p_\delta$. We need to make sure that for every $\alpha\in N^p_\delta$ and $\beta\in N^p_\delta$ with $\beta>\alpha$, 
\[
\bar{s}_\alpha\in M^\delta_\beta\iff\bar{t}_\alpha\in M^\delta_\beta,
\]
and $s\in M^\delta_\beta$ iff $t\in M^\delta_\beta$. 

\begin{claim}\label{claim:M1s}
    For all $\beta\in N^p_\delta$ and $\alpha\in N^p_\delta$ with $\alpha<\beta$, if $\bar{s}_\alpha\in M^\delta_\beta$, then $M^1_{\bar{s}_\alpha}\in M^\delta_\beta$.
\end{claim}
\begin{proof}[Proof of Claim \ref{claim:M1s}]
    Let $\bar{s}_\alpha\in X$ and let $\beta\in N^p_\delta$ be such that $\bar{s}_\alpha\in M^\delta_\beta$. Say $p\rest\delta$ decides $M^1_{\bar{s}_\alpha}$ to be the model $M\in \vec{M}^{\delta,\dot G(\po_\delta)}$. Up to extending $p\rest\delta$ further, we assume by induction that it is super-nice with respect to $M^\delta_\beta$. Therefore, by assumption its trace $[p\rest\delta]_{M^\delta_\beta}$ is a residue of $p\rest\delta$ into $M^\delta_\beta$. Since $\bar{s}_\alpha\in M^\delta_\beta$, by elementarity and by the fact that $M^\delta_\beta$ is reflective, we find $w\in\po_\delta\cap M^\delta_\beta$ extending the trace $[p\rest\delta]_{M^\delta_\beta}$ that decides $M^1_{\bar{s}_\alpha}$, say to be the model $M'\in\vec{M}^{\delta,\dot G(\po_\delta)}$. But then $w$ and $p\rest\delta$ are compatible, so we must have $M=M'\in M^\delta_\beta$.
\end{proof}

\noindent Next, let $\bar{t}_\alpha$, $\alpha\in N^p_\delta$, be nodes of collapse index $(\alpha_{\bar{t}_\alpha},\beta_{\bar{t}_\alpha},\gamma_{\bar{t}_\alpha},\delta_{\bar{t}_\alpha})$ that satisfy:
\begin{itemize}
    \item the collapse part $f^p_0$ does not decide anything about them, i.e. \[(\alpha_{\bar{t}_\alpha},\beta_{\bar{t}_\alpha},\gamma_{\bar{t}_\alpha},\delta_{\bar{t}_\alpha})\notin\dom(f^p_0),\]
    \item $\alpha_t=\height(\bar{s}_\alpha)$,
    \item and $\beta_t=\beta_p(\bar{s}_\alpha)$ if $\beta_p(\bar{s}_\alpha)$ exists,
    \item $\gamma_{\bar{t}_\alpha}=\kappa\cap M^1_{\bar{s}_\alpha}$,
    \item $\delta_{\bar{t}_\alpha}<\beta$ for every $\beta$ such that $\bar{s}_{\alpha}\in M^\delta_\beta$,
    \item if $\bar{t}_\alpha\neq\bar{t}_{\alpha'}$ and $\alpha<\alpha'$, then $\bar{t}_\alpha\in\alpha_{\bar{t}_{\alpha'}}\times\beta_{\bar{t}_{\alpha'}}$.
\end{itemize}
Furthermore, let $t$ be a node of collapse index $(\alpha_t,\beta_t,\gamma_t,\delta_t)$ which the collapse part $f^p_0$ does not decide anything and such that $\bar{t}_\alpha\in \alpha_t\times\beta_t$ for every $\alpha\in N^p_\delta$, and if $s\in M^\delta_\alpha$, then $t\in M^\delta_\alpha$.
There is an extension of the collapse part $f\leq f^p_0$ that decides the following:
\begin{itemize}
    \item $\bar{t}_\alpha<_{\dot T}t$,
    \item for every $s'\in\dom(f^p_\delta)$, $f^p_\delta(s')\wedge t=f^p_{\delta}(s\wedge s')$.
\end{itemize}
Let $p'$ be the extension of $p$ obtained by replacing the collapse part $f^p_0$ by $f$ and by letting
\[
f^{p'}_\delta:=f^p_\delta\cup\{(\bar{s}_\alpha,\bar{t}_\alpha):\alpha\in N^p_\delta,s\notin M^\delta_\alpha\}\cup\{(s,t)\}.
\]
Then, by construction, $p'$ is a condition in $\po_\tau$, it extends $p$, and satisfies $s\in\dom(f^{p'}_\delta)$. This ends the Node density assertion.

\vspace{8pt}

Next, we prove the second assertion of $D_\tau(M^\tau_\alpha)$ being dense below every condition $p$ which is amenable to $M^\tau_\alpha$.
Since $\po_\tau$ is $\sigma$-closed one can iterate $\omega$ many times the Node density assertion, by using a standard bookkeeping argument, to construct an extension $p'\leq p$ such that for every $\delta<\tau$:
\begin{itemize}
\item $f^p_0 \in D(t)$ for all $t\in \rng(f^{p'}_\delta)$,
\item for every structure $M^\delta_\beta$ that appears in $p'$, and pair $(s,t) \in f^{p'}_\delta$ that are forced by $p'\rest\delta$ to be exit nodes from $M^\delta_\beta$, $p'\uhr \delta$ decides a countable sequence of predecessors of $s$, $\la\bar{s}_n\mid n<\omega\ra$, whose height are cofinal in the height of $\dot\pi_{p'}(t)$, and that are in the domain of $f^p_\delta$. They then satisfy that the sequence $\la f^p_\delta(s_n)\mid n<\omega\ra$ is cofinal in $\dot\pi_{p'}(t)$.
\end{itemize}
These two items take care of the super-niceness of $p'$ with respect to $M^\tau_\alpha$, \ref{Def:Super-Nice}. (In fact, they take care of $p'$ being super-nice with respect to all structures with respect to which $p'$ is amenable.)

To achieve this, consider a single step of the construction. Let $(s,t) \in f^p_\delta$ be exit nodes from some $M^\delta_\beta$ that appears in $p$. 
We define a decreasing sequence of conditions $\la q^n \mid n < \omega\ra$ below $q^0 = p\uhr \delta$, whose collapse parts are all in $D(t)$. 
Extend first $q^0$ to $q'$ to decide the node $\bar{t}_0 = \pi_{q^0}(t)$ and denote its level by $\bar{\alpha}_0$. Extend then $q^1\leq q'$ to decide the unique predecessor of $s$ at the height of $\bar{t}_0$, call it $\bar{s}_0$. This $\bar{s}_0$ is the only possible candidate for the pre-image of $\bar{t}_0$. Indeed, any extension of $q^1$ and $p$ in the poset $\po_\tau$ that has $\bar{s}_0$ in the domain of the $\delta$-th embedding must map $\bar{s}_0$ to $\bar{t}_0$. Continue like this: Extend $q^1$ to some $q''$ that decides the node $\bar{t}_1:=\pi_{q_1}(t)$ and find $q^2\leq q''$ that decides the predecessor of $s$ at the height of $\bar{t}_1$, call it $\bar{s}_1$. Continue $\omega$ many times. Finally, let $q^\omega$ extend each $n$. Then let $p_1$ be the simple amalgamation of $q^\omega$ and $p$. It is a condition in $\po_\tau$ by Lemma \ref{Lemma:SimpleAmalgamationEasy}. By enumerating all relevant pairs of nodes $(s,t)$ in the supports of the $p_n$, using a bookkeeping function, we finally obtain a condition in $\po_\tau$ that satisfies the above two bulletpoints and is thus super-nice with respect to $M^\tau_\alpha$. 

\vspace{8pt}

We move to the third assertion of the lemma. Given $p \in D_\tau(M^\tau_\alpha)$ we would like to show that $[p]_{M^\tau_\alpha}$ is a condition  in $\po_\tau$.
Having that $[p\uhr \delta]_{M^\delta_\alpha} \in \po_\delta$ for every $\delta \in M^\tau_\alpha \cap \tau$, it suffices to verify the assertion for the case $\tau = \delta+1$ is a successor ordinal.
In this case, one has to verify  that $[p]_{M^{\delta+1}_\alpha}$ satisfies the requirements in assumptions V,VI, and VII regarding decisions for pairs $(s,t)\in f^p_\delta \cap M^{\delta+1}_\alpha$ and structures $M^{\delta}_\beta$, $\beta < \alpha$ that appear in $p$. 
Having $p \in D_{\delta}(M^{\delta}_\alpha)$ implies $p\uhr \delta \in D_\delta(M^\delta_\alpha)$, which by our assumption, implies that $\po_\delta \cap M^{\delta+1}_\alpha$ is a regular sub-forcing of $\po_\delta/(p\uhr\delta)$. It follows that all decisions about nodes $s \in \dom(f_p^\delta) \cap M^{\delta}_\alpha$, including (i) the meets, (ii) their exit nodes from every $M^\delta_\beta$ that appears in $p$, (iii) the identity of $M^s_1$, and (iv) $\beta_p(s)$, which are determined by $p\uhr \delta$, must already be determined by the trace $[p\uhr\delta]_{M^\delta_\alpha}$. Otherwise, there will be an extension $w \in \po_\delta \cap M^{\delta+1}_\alpha$ which gives incompatible information. But such a condition $w$ could not be compatible with $p\uhr \delta$, contradicting the inductive assumption of the lemma for $\delta$.
\end{proof}

\noindent Our goal is to show the following:

\begin{proposition}\label{Proposition:MainStrongProper}
Suppose $M^\tau_\alpha \in \vec{M}^\tau$ and $q \in \po_\tau$ is amenable to $M^\tau_\alpha$.
The function which takes $p \in D_\tau(M^\tau_\alpha)$ to $[p]_{M^\tau_\alpha}$ is a residue function to $M^\tau_\alpha$ over $q$. 
\end{proposition}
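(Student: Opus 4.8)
The plan is to read the proposition off the three items of Lemma~\ref{Lemma:DensityMainLemma} together with the simple–amalgamation machinery, arguing by recursion on $\tau$. So I would assume the two bulleted hypotheses of Lemma~\ref{Lemma:DensityMainLemma} hold for all $\delta<\tau$ — they are exactly Lemma~\ref{Lemma:DensityMainLemma}(2)--(3) and this proposition at stage $\delta$. Fix $M^\tau_\alpha\in\vec M^\tau$ and $q\in\po_\tau$ amenable to $M^\tau_\alpha$, and set $D:=D_\tau(M^\tau_\alpha)\cap(\po_\tau/q)$. By Lemma~\ref{Lemma:DensityMainLemma}(2) the set $D$ is dense below $q$; by Lemma~\ref{Lemma:DensityMainLemma}(3) we have $[p]_{M^\tau_\alpha}\in\po_\tau\cap M^\tau_\alpha$ for each $p\in D$; and $p\mapsto[p]_{M^\tau_\alpha}$ obviously lands in $M^\tau_\alpha\cap\po_\tau$. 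By Definition~\ref{Def:Residue}, it therefore remains only to prove the compatibility clause: \emph{for every $p\in D$ and every $w\in M^\tau_\alpha\cap\po_\tau$ with $w\le[p]_{M^\tau_\alpha}$, the conditions $p$ and $w$ have a common extension.} (Strong properness itself then follows from Lemma~\ref{Lem:ResidueImpliesStrongProper}.)

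To produce the common extension I would first, working inside the reflecting structure $N^\tau_\alpha$ with $N^\tau_\alpha\cap\mathcal A^\tau=M^\tau_\alpha$, use $\sigma$-closure of $\po_\tau$ and node density (Lemma~\ref{Lemma:DensityMainLemma}(1)) to extend $w$ to $w^*\le w$ in $M^\tau_\alpha$ such that: $w^*$ is super-nice with respect to every structure appearing in it (in particular every $M^\delta_\beta$ appearing in $[p]_{M^\tau_\alpha}$, i.e.\ exactly those appearing in $p$ with $\beta<\alpha$ and $\delta\in M^\tau_\alpha$); $f^{w^*}_0\in D(t)$ for every $t$ in the range of the working parts of $p$ and of $w^*$; and $\dom(f^{w^*}_\delta)$ contains all the ``in-$M$'' pre-image nodes $\bar s$ and cofinal pre-image sequences $\langle\bar s_n\rangle$ supplied by the nice-projection clauses of $p$ at the structures $M^\delta_\alpha$ (these lie in $\dom(f^{[p]_{M^\tau_\alpha}}_\delta)\subseteq\dom(f^w_\delta)$, so adding them costs nothing). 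Then I would check that $w^*$ is a local extension of $p$ with projections in the sense of Definition~\ref{Def:LocalExtension} and apply Lemma~\ref{Lemma:SimpleAmalgamationWorks} to conclude that the simple amalgamation of $p$ and $w^*$ is a common extension of both.

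The verification of Definition~\ref{Def:LocalExtension} for the pair $(p,w^*)$ is the technical core. Clause~1 ($\delta=0$) is immediate: $f^{w^*}_0$ extends $f^p_0\cap M^\tau_\alpha$, so it agrees with $f^p_0$ on their common domain and the two are compatible in $\col(\omega_1,<\kappa)$. For clauses~2--3 I would use that any node of $p$ missing from $\dom(f^{w^*}_\delta)$ lies outside $M^\tau_\alpha$ (nodes of $p$ inside $M^\tau_\alpha$ are already in $[p]_{M^\tau_\alpha}$, hence in $w^*$), and that every $M^\delta_\alpha$ with $\delta\in M^\tau_\alpha\cap\supp(p)$ appears in both $p$ (amenability of $p$ to $M^\tau_\alpha$) and $w^*$ (it appears in $[p]_{M^\tau_\alpha}$). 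Given such $s\in\dom(f^p_\delta)\setminus\dom(f^{w^*}_\delta)$, super-niceness of $p$ at $M^\tau_\alpha$ hands us the exit node $e\le_{S_\delta}s$ of $s$ from $M^\delta_\alpha$ together with a pre-image $\bar s<_{S_\delta}e$ (or a cofinal sequence $\bar s_n<_{S_\delta}e$) of $\pi_{f^p_0}(f^p_\delta(e))$ lying in $\dom(f^{w^*}_\delta)\cap M^\delta_\alpha$; for clause~2 with a new node $s'$ of $w^*$ this $\bar s$ (or a high enough $\bar s_n$) is the connecting node $\tilde s$, and for clause~3 with a structure $M^\delta_{\alpha'}$ ($\alpha'<\alpha$) appearing in $w^*$ but not in $p$ the connecting node is the exit node of $\bar s$ from $M^\delta_{\alpha'}$ as decided by $w^*$. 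Here one invokes Inductive Assumptions~VI and~VII — which pin down $\beta_p(e)$ and force the collapse height of $f^p_\delta(e)$ to be $(\Lev(e),\beta_p(e),\cdot)$ — to see that $\pi_{f^p_0}(f^p_\delta(e))$, and hence $\bar s$, avoids $M^\delta_{\alpha'}$ whenever $\alpha'<\beta_p(e)$ and otherwise lies inside $M^\delta_{\beta_p(e)}$, which already appears in $p$. Clause~4 holds because any structure appearing in $p$ but not in $w^*$ has index $\ge\alpha$ (or lives at a coordinate outside $M^\tau_\alpha$), and $f^{w^*}_\delta$, supported on $\alpha\times\alpha$, is contained in it. Throughout, the trace inequalities $[w^*\uhr\delta]_{M^\delta_\alpha}\le[p\uhr\delta]_{M^\delta_\alpha}$ follow at once from $w^*\le[p]_{M^\tau_\alpha}$ and $w^*\in M^\tau_\alpha$.

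The hard part will be the level requirement $\Lev_{S_\delta}(\tilde s)\ge\Lev_{S_\delta}(s')$ in clause~2 (and its analogue in clause~3): the pre-image nodes $\bar s,\bar s_n$ delivered by $p$ only reach the level of $\pi_{f^p_0}(f^p_\delta(e))$, which a priori need not dominate a high-level new node $s'$ of $w^*$. I would handle this by folding the requirement into the super-nice density construction for $p$ in Lemma~\ref{Lemma:DensityMainLemma}(2): using the flexibility of the collapse projections (Lemmas~\ref{Lemma:FlexibleProjections2} and~\ref{Lemma:FlexibleProjections3}) one pushes the projections $\pi_{f^p_0}(f^p_\delta(e))$ cofinally high during that construction, the only subtlety being to do this simultaneously and stably for the countably many relevant pairs $(\delta,s)$ — precisely the dovetailing already carried out in the proof of Lemma~\ref{Lemma:DensityMainLemma}. (Alternatively, one restricts the new nodes admitted in passing from $w$ to $w^*$ to levels already controlled by the data of $p$.) Once the level requirement is secured, the remaining verifications are the routine case analysis sketched above, and Lemma~\ref{Lemma:SimpleAmalgamationWorks} closes the argument.
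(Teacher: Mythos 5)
Your overall framework (build from $w$ a condition that is a local extension of $p$ with projections, then invoke Lemma~\ref{Lemma:SimpleAmalgamationWorks}) matches the paper's strategy, but the attempt to realize it by extending $w$ to a $w^*$ that lives \emph{entirely inside} $M^\tau_\alpha$ cannot work, and you have correctly located where it breaks: the level requirement $\Lev_{S_\delta}(\tilde s)\ge\Lev_{S_\delta}(s')$ in clause~2 of Definition~\ref{Def:LocalExtension}. Neither of your two proposed repairs closes the gap. For a fixed super-nice $p$ and an exit node $s\in\dom(f^p_\delta)$ from $M^\delta_\alpha$ with $t=f^p_\delta(s)$, the pre-images $\bar s$ or $\la\bar s_n\ra$ that niceness supplies inside $M^\delta_\alpha$ live at levels bounded by the level of the \emph{fixed} node $\pi_{f^p_0}(t)$. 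An arbitrary $w\le[p]_{M^\tau_\alpha}$ in $M^\tau_\alpha$ may contain nodes $s'$ at levels far above that bound, and then $\dom(f^p_\delta)\cap M^\delta_\alpha$ below $s$ simply contains nothing high enough to serve as $\tilde s$. Pushing $\pi_{f^p_0}(t)$ ``cofinally high'' during the super-nice density construction of Lemma~\ref{Lemma:DensityMainLemma}(2) is no help, because that construction produces $p$ \emph{before} the adversary picks $w$; the bound is frozen into $p$ and cannot anticipate $w$. And restricting $w^*$ to ``controlled'' levels is not available either, since $w$ itself may already contain the offending high-level nodes, which any $w^*\le w$ must retain.

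The paper's proof of the proposition is the one-line deduction from Lemma~\ref{Lemma:PropernessForSequences} with the trivial $p$-M.E.\ pair of length one, $\la(\tau,M^\tau_\alpha)\ra$ and $q^0=w$; the real content is inside that lemma, and the crucial feature of its construction is precisely what your $w^*$ lacks: the condition $p'$ produced there does \emph{not} live in $M^\tau_\alpha$. It is built by closing $\{(\tau,M^\tau_\alpha)\}$ under the $E(p,\cdot)$-operation to a $\tau$-sequence of nested reflecting structures, constructing conditions $q^\eta$ inside each, and at the $q^{\eta,1}$ step (and again in the final pass $r\to r^1$) \emph{extending the collapse part at the collapse indices of the exit nodes $t=f^p_\delta(s)\notin M^\tau_\alpha$} via Lemma~\ref{Lemma:FlexibleProjections3}, so that the collapse-generic branch $b_t$ is steered through exactly the implicit images $t(s,s')$ and exit-node levels that $w$'s new nodes require, thereby \emph{raising} $\pi(t)$ to the needed height. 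That extension necessarily touches collapse coordinates outside $M^\tau_\alpha$, which is why it cannot be carried out inside $N^\tau_\alpha$. This is not a detail to be patched into your argument but the central mechanism of Section~\ref{Section:StrongProperness}; without it the compatibility of $p$ and $w$ is unreachable by amalgamation over $M^\tau_\alpha$.
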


\noindent Before proving Proposition~\ref{Proposition:MainStrongProper} we give its immediate corollary and make some preparations.

\begin{corollary} Let $\tau<\kappa^+$ and $\alpha<\kappa$.
\begin{enumerate}
    \item Every condition $q \in \po_\tau$ which is amenable to $M^\tau_\alpha$ is strongly proper with respect to $M^\tau_\alpha$. In particular, $\po_\tau \cap M^\tau_\alpha$ is a regular subforcing of $\po_\tau/q$.
    
    \item For every condition $q \in D_\tau(M^\tau_\alpha)$, $[q]_{M^\tau_\alpha}$ forces that $q$ belongs to the quotient forcing $\po_\tau/(\po_\tau \cap M^\tau_\alpha)$. 
\end{enumerate}
 
\end{corollary}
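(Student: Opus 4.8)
The plan is to obtain both items as formal consequences of Proposition~\ref{Proposition:MainStrongProper}, using only the general facts about residue functions and strong properness recalled at the beginning of Section~\ref{Section:StrongProperness} together with the conclusions of Lemma~\ref{Lemma:DensityMainLemma}; no further analysis of the structure of $\po_\tau$ is needed.

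For item~1, I would fix $q \in \po_\tau$ amenable to $M^\tau_\alpha$. By Lemma~\ref{Lemma:DensityMainLemma}(2) the set $D_\tau(M^\tau_\alpha)$ is $\sigma$-closed and dense below $q$, so by Proposition~\ref{Proposition:MainStrongProper} the trace map $p \mapsto [p]_{M^\tau_\alpha}$ is a residue function to $M^\tau_\alpha$ over $q$ in the sense of Definition~\ref{Def:Residue}. Lemma~\ref{Lem:ResidueImpliesStrongProper} then gives that $q$ is strongly proper with respect to $M^\tau_\alpha$; unwinding Definition~\ref{Def:StrongProperness}, $q$ forces that $\name{G} \cap M^\tau_\alpha$ is a $V$-generic filter for $\po_\tau \cap M^\tau_\alpha$. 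The ``in particular'' is then immediate: any $V$-generic $G \subseteq \po_\tau$ with $q \in G$ satisfies $G \cap M^\tau_\alpha = G \cap (\po_\tau \cap M^\tau_\alpha)$, since every element of $\po_\tau \cap M^\tau_\alpha$ lies in $M^\tau_\alpha$; hence $G \cap (\po_\tau \cap M^\tau_\alpha)$ is $V$-generic for $\po_\tau \cap M^\tau_\alpha$, which is precisely the assertion that $\po_\tau \cap M^\tau_\alpha$ is a regular subforcing of $\po_\tau/q$. If the full regular-suborder package is wanted, I would additionally note that for $w_0,w_1 \in \po_\tau \cap M^\tau_\alpha$, compatibility in $\po_\tau$ below $q$ coincides with compatibility in $\po_\tau \cap M^\tau_\alpha$, the nontrivial direction following by a density argument from the fact that $q$ forces $\name{G} \cap M^\tau_\alpha$ to be a filter.

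For item~2, I would take $q \in D_\tau(M^\tau_\alpha)$ and first invoke Lemma~\ref{Lemma:DensityMainLemma}(3), which guarantees that $[q]_{M^\tau_\alpha}$ is a genuine condition of $\po_\tau \cap M^\tau_\alpha$, so the assertion is meaningful. The essential input is the defining property of the residue map, valid for every element of $D_\tau(M^\tau_\alpha)$ by Proposition~\ref{Proposition:MainStrongProper}: for every $w \in M^\tau_\alpha \cap \po_\tau$ with $w \leq [q]_{M^\tau_\alpha}$, the conditions $w$ and $q$ are compatible in $\po_\tau$. Now let $H \subseteq \po_\tau \cap M^\tau_\alpha$ be $V$-generic with $[q]_{M^\tau_\alpha} \in H$, and recall that the quotient forcing $\po_\tau/(\po_\tau \cap M^\tau_\alpha)$ is the $(\po_\tau \cap M^\tau_\alpha)$-name for $\{ r \in \po_\tau : r \text{ is compatible with every } w \in H\}$. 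Given $w \in H$, I pick $w' \in H$ with $w' \leq w$ and $w' \leq [q]_{M^\tau_\alpha}$; since $w' \in M^\tau_\alpha \cap \po_\tau$ and $w' \leq [q]_{M^\tau_\alpha}$, the residue property provides $r \leq w', q$, whence $r \leq w, q$, so $q$ is compatible with $w$. As $w \in H$ was arbitrary, $q$ lies in $\po_\tau/H$, and since $H$ was an arbitrary generic through $[q]_{M^\tau_\alpha}$, this is exactly the statement that $[q]_{M^\tau_\alpha} \Vdash_{\po_\tau \cap M^\tau_\alpha} \check{q} \in \po_\tau/(\po_\tau \cap M^\tau_\alpha)$.

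There is essentially no combinatorial obstacle here; the one point I would be careful about is the bookkeeping of which conclusions of Lemma~\ref{Lemma:DensityMainLemma} are in force at the level $\tau$ under consideration. Density of $D_\tau(M^\tau_\alpha)$ below amenable conditions and the fact that $[q]_{M^\tau_\alpha}$ is a condition of $\po_\tau \cap M^\tau_\alpha$ are exactly conclusions~(2) and~(3) of that lemma, whose hypotheses are the inductive data established along the proof of Proposition~\ref{Proposition:MainStrongProper}; as we are entitled to use that Proposition, these are available, and the corollary follows by the formal manipulations above.
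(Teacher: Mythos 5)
Your argument is correct and is exactly the derivation the paper intends: the corollary is stated without proof precisely because it is the formal unwinding of Proposition~\ref{Proposition:MainStrongProper} via Definition~\ref{Def:Residue}, Lemma~\ref{Lem:ResidueImpliesStrongProper}, and conclusions (2)--(3) of Lemma~\ref{Lemma:DensityMainLemma}, which is what you do. Your bookkeeping remark about which conclusions of Lemma~\ref{Lemma:DensityMainLemma} are available at level $\tau$ is the right point to flag, and your treatment of it is consistent with the paper's inductive setup.
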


Given a condition $p \in D_\tau(M^\tau_\alpha)$ as in the statement of Proposition \ref{Proposition:MainStrongProper}, and an extension $w \leq [p]_{M^\tau_\alpha}$ in $\po_\tau\cap M^\tau_\alpha$, the goal of the proof is to find a common extension of $w$ and $p$. Taking the simple amalgamation of $w$ and $p$ will not work in general. The main part of the argument is based on an inductive construction that results in a common extension $p'$ of $p$ and $w$.


\begin{definition}${}$
\begin{enumerate}
\item Let $\tau < \kappa^+$. A \textit{$\tau$-sequence} is a sequence of the form $\la (M_{\alpha_i}^{\tau_i},\tau_i)\mid i<\nu\ra$ where $\nu$ is a countable ordinal and:
\begin{enumerate}
    \item if $i<j$, then $\alpha_i\leq\alpha_j$, 
    \item if $i<\nu$, then $\tau_i\leq \tau$,
    \item for every $i>0$ there is $j<i$ such that $\tau_i\leq \tau_j$ 
      and  $\tau_i\in M^{\tau_j}_{\alpha_j}$.
\end{enumerate}
\item The \emph{root} of a $\tau$-sequence $\la(M^{\tau_i}_{\alpha_i},\tau_i)\mid i<\nu\ra$ is the model $M^{\tau_0}_{\alpha_0}$.

\item Suppose that $\la (M^{\tau_i}_{\alpha_i},\tau_i) \mid i < \nu\ra$ is a $\tau$-sequence and let $p \in \po_\tau$. We say that the sequence is \emph{closed to $p$} if $M^{\tau_i}_{\alpha_i}$
appears in $p$,
$p\uhr \tau_i \in D_{\tau_i}(M^{\tau_i}_{\alpha_i})$, and $E(p\uhr\tau_i,M^{\tau_i}_{\alpha_i})$ is contained in $\la (M^{\tau_i}_{\alpha_i},\tau_i) \mid i < \nu\ra$, for all $i < \nu$.

\item  Suppose that $\la (M^{\tau_i}_{\alpha_i},\tau_i) \mid i < \nu\ra$ is a $\tau$-sequence and let $p \in \po_\tau$. The \emph{$p$-closure} of the sequence is the natural extension obtained by adding the pairs from $E(p\uhr\tau_i,M^{\tau_i}_{\alpha_i})$ for each $i < \nu$. 
\end{enumerate}
\end{definition}

\begin{remark}
It is clear from the definitions that the $p$-closure of a $\tau$-sequence remains a $\tau$-sequence (with the same root).    
\end{remark}

The main combinatorial feature we will need from $\tau$-sequence is given in the following Lemma. 

\begin{lemma}\label{LEM:tau-sequenceProperty}
    Let 
    $\la (M_{\alpha_i}^{\tau_i} ,\tau_i):i<\nu\ra$ be a $\tau$-sequence. For every $i < j$, if $\tau_i 
    \leq \tau_j$ then $\tau_i \in M^{\tau_j}_{\alpha_j}$.
\end{lemma}

\noindent To prove the Lemma we make use of the idea that a $\tau$-sequence can be described in terms of the simple paths from Definition \ref{de:path}. 

\noindent We shall prove a claim :

\begin{claim}\label{claim:tausq} Let  $\la (M_{\alpha_i}^{\tau_i} ,\tau_i):i<\nu\ra$ be a $\tau$-sequence of models. For all $i<j<\nu$, 
$$ M^{\tau_i}_{\alpha_i}\cap(\min(\tau_i,\tau_j)+1)\subseteq M^{\tau_j}_{\alpha_j}.$$

\end{claim}

\begin{proof}[Proof of Claim \ref{claim:tausq}]
For $\delta<\kappa^+$ let $f_\delta:\delta\rightarrow\kappa$ be an injection of $\delta$ into $\kappa $ which is minimal in the well ordering of $H_{\kappa^+}$. Obviously $f_\delta\in M^\rho_\mu$ if $\delta\in M^\rho_\mu$.

We use the fact that the rooted sequence of models is a union of paths that start from $(M^{\tau}_\alpha,\tau)$.

We show first that the claim holds for any two nodes of our sequence that lie on the same path.
Let $$(M^{\gamma_i}_{\beta_i},\gamma_i)_{i\leq n}$$ be a path. 
We show by induction on $n$ that for all $i < j$ along the path, 
$M^{\gamma_i}_{\beta_i} \cap \gamma_j \subseteq M^{\gamma_j}_{\beta_j}$.
Suppose that claim holds for $n$ and prove for $n +1$. It suffices to check that 
for every $i \leq n$, 
$M^{\gamma_i}_{\beta_i} \cap \gamma_{n+1} \subseteq  M^{\gamma_{n+1}}_{\beta_{n+1}}$.
Since $\gamma_{n+1} < \gamma_n$ we can use the inductive assumption from $i < n$ to get
$M^{\gamma_i}_{\beta_i} \cap \gamma_{n+1} \subseteq M^{\gamma_n}_{\beta_n} \cap \gamma_{n+1}$. 
As $\gamma_{n+1} \in M^{\gamma_n}_{\beta_n}$ then for every 
$\rho \in  M^{\gamma_n}_{\beta_n} \cap \gamma_{n+1}$ the index $\nu < \kappa$ of $\rho$ in a fixed well-order of the $\kappa$-sized structure $M^{\gamma_{n+1}}$ belongs to 
$M^{\gamma_n}_{\beta_n}$. Hence $\nu < \beta_n < \beta_{n+1}$ and therefore
$\rho \in M^{\gamma_{n+1}}_{\beta_{n+1}}$.\\

\noindent
So we now have to verify the claim for members of two different paths.
Let $$(M^{\gamma_i}_{\beta_i},\gamma_i)_{i\leq m}$$
and $$(M^{\delta_j}_{\alpha_j},\delta_j)_{j\leq n}$$ be paths. 
 The claim now is that if $i\leq m,j\leq n$ then if 
$\beta_i<\alpha_j $, then 
$$ M^{\gamma_j}_{\beta_j}\cap(\min (\gamma_j, \delta_j)+1)\subseteq M^{\delta_i}_{\alpha_i}.$$
We prove this by induction on $n+m$. 
Note that if $n = 0$ or $m = 0$ then the models lie on the same path and therefore the claim follows from the previous argument. 
We therefore assume that $m,n>0$ and without loss of generality that  $\alpha_n \leq \beta_m$.
Using the induction assumption, we have the claim for the pair of paths $$(M^{\gamma_j}_{\beta_j},\gamma_j)_{j\leq m-1}$$
and $$(M^{\delta_i}_{\alpha_i},\delta_i)_{i\leq n}$$
So the only  additional cases of the claim we have to consider are between $M^{\delta_i}_{\alpha_i}$ and $M^{\gamma_m}_{\beta_m}$ for all $i\leq n$. By our assumption, $\alpha_i \leq \beta_m$ for all $i \leq n$. \\

\noindent 
We distinguish several cases :
\begin{description}
    \item[\textbf{ Case 1:  $\alpha_i \leq \beta_{m-1}$.}] 
    By the induction assumption for the two paths $$(M^{\gamma_j}_{\beta_j},\gamma_j)_{j\leq m-1}$$ and $$(M^{\delta_i}_{\alpha_i},\delta_i)_{i\leq n}$$ and the two members of these pairs
$M^{\delta_i}_{\alpha_i}$ and $M^{\gamma_{m-1}}_{\beta_{m-1}}$ we have
$$M^{\delta_i}_{\alpha_i}\cap (\min(\delta_i,\gamma_{m-1})+1)\subseteq M^{\gamma_{m-1}}_{\beta_{m-1}}.$$

Since we proved the claim for the path $$(M^{\delta_i}_{\alpha_i},\delta_i)_{i\leq n}$$
$$M^{\gamma_{m-1}}_{\beta_{m-1}}\cap (\min(\gamma_{m-1},\gamma_m)+1)\subseteq M^{\gamma_m}_{\beta_m}.$$
 Since $\gamma_m<\gamma_{m-1}$, $\min(\gamma_m,\gamma_{m-1})=\gamma_{m-1}$ and $\min(\delta_i,\gamma_m)\leq \min (\delta_i,\gamma_{m-1})$ we get
 $$M^{\delta_i}_{\alpha_i}\cap (\min(\delta_i,\gamma_m)+1)\subseteq M^{\gamma_{m-1}}_{\beta_{m-1}}\cap (\min(\delta_i,\gamma_m)+1)\subseteq M^{\gamma_m}_{\beta_m}.$$

\item[\textbf{ Case 2:  $\beta_{m-1}<\alpha_i \leq \beta_m$  and $\delta_i\geq \gamma_m$.}]${}$\\
 Recall that $\gamma_m\in M^{\gamma_{m-1}}_{\beta_{m-1}}$. By the induction assumption on the pair of paths:
 $$(M^{\gamma_j}_{\beta_j},\gamma_j)_{j\leq m-1}$$
and $$(M^{\delta_i}_{\alpha_i},\delta_i)_{i\leq n}$$

$M^{\gamma_{m-1}}_{\beta_{m-1}}\cap (\min(\gamma_{m-1}, \delta_i)+1)\subseteq M^{\delta_i}_{\alpha_i}$. Since $\gamma_m\leq \delta_i$ and $\gamma_m<\gamma_{m-1}$ , we get that $\gamma_m\in M^{\delta_i}_{\alpha_i}$.  Hence $f_{\gamma_m}\in M^{\delta_i}_{\alpha_i}$. 

Let $\rho \in M^{\delta_i}_{\alpha_i}\cap (\min(\delta_i,\gamma_m)+1)$. To see that $\rho \in M^{\gamma_m}_{\beta_m}$, consider the ordinal $\eta=f_{\gamma_m}(\rho)$.  We have that $\eta\in\kappa\cap M^{\delta_i}_{\alpha_i}=\alpha_i$ and as $\alpha_i \leq \beta_m=\kappa\cap M^{\gamma_m}_{\beta_m}$, $\eta\in M^{\gamma_m}_{\beta_m} $. It follows that $\rho=f_{\gamma_m}(\eta)\in M^{\gamma_m}_{\beta_m}$. 

\item[\textbf{ Case 3:  $\beta_{m-1}<\alpha_i \leq \beta_m$  and $\delta_i< \gamma_m$.}]${}$\\
Let $k\leq n$ be maximal such that $\alpha_k\leq\beta_{m-1}$ or $\delta_k\geq\gamma_m$. Note that
$k<i$. We prove by induction on $l$, where $k<l\leq i$, that $\delta_l\in M^{\gamma_m}_{\beta_m}$.
For each such $l$ we have that
$\alpha_{l} > \beta_{m-1}$ and $\delta_{l} < \gamma_m$. \\

The base case of $l = k+1$ requires a separate argument. We distinguish two cases.\\

\noindent
\textbf{($l = k+1$, first subcase)} Assume first that $\alpha_k \leq \beta_{m-1}$. \\
Then $\delta_{k+1}<\gamma_m<\gamma_{m-1}$ and $\delta_{k+1}\in M^{\delta_k}_{\alpha_k}$. By the induction assumption $M^{\delta_k}_{\alpha_k}\cap (\min(\delta_k,\gamma_{m})+1) \subseteq M^{\gamma_{m-1}}_{\beta_{m-1}}$. Hence $\delta_{k+1}\in M^{\gamma_{m-1}}_{\beta_{m-1}}$. It follows from our proof for the case of a single path that $\delta_{k+1}\in M^{\gamma_m}{\beta_m}$.\\

\noindent 
\textbf{($l = k+1$, second subcase)} Assume now $\beta_{m-1}<\alpha_k$.\\
Hence $\beta_{m-1}<\alpha_k \leq \beta_m$ and $\delta_k\geq\gamma_m$. By case 2 above, $M^{\delta_k}_{\alpha_k}\cap\gamma_m\subseteq M^{\gamma_m}_{\beta_m}$. $\delta_{k+1}<\delta_m$ and $\delta_{k+1}\in M^{\delta_k}_{\alpha_k}$ we get $\delta_{k+1}\in M^{\gamma_m}_{\beta_m}$.\\

\noindent
Establishing the assertion for $l = k+1$ we proceed by induction to show the same holds for other $l \leq i$.\\

\textbf{(Inductive step $l \to l+1$)}
Assume the claim  for $l$ and prove it for $l+1$. By assumption $\delta_l\in M^{\gamma_m}_{\beta_m}$. We have $\delta_{l+1}<\delta_l$ and $\delta_{l+1}\in M^{\delta_l}_{\alpha_l}$. By assumption $f_{\delta_l}\in M^{\gamma_m}_{\beta_m}\cap M^{\delta_l}_{\alpha_l}$. So $\rho=f^{-1}_{\delta_l}(\delta_{l+1})\in \alpha_i$. Hence $\rho\in \beta_m$. So $f_{\delta_l}^{-1}(\rho)\in M^{\gamma_m}_{\beta_m}$.

Using this for $l=i$ we get that $\delta_i\in M^{\gamma_m}_{\beta_m}$. As in the previous cases we get that $M^{\delta_i}_{\alpha_i}\cap (\min(\delta_i,\gamma_m)+1)\subseteq M^{\gamma_m}_{\beta_m} $.
\end{description}
This concludes the proof of the Claim. 
\end{proof}

\begin{proof}[Proof of Lemma \ref{LEM:tau-sequenceProperty}]
The lemma follows easily from the claim: if $\tau_i<\tau_j$, then by the claim we have $\tau_i\in M^{\tau_i}_{\alpha_i}\cap(\tau_i+1)\subseteq M^{\tau_j}_{\alpha_j}$.

\end{proof}

\noindent Claim \ref{claim:tausq} gives us another lemma that will be useful:

\begin{lemma}\label{lem:restau}
    Suppose that $\la (M^{\tau_i}_{\alpha_i},\tau_i)\mid i<\nu\ra$ is a $\tau$-sequence. If $k<j<\nu$ are such that $\tau_j\leq\tau_k$ and $\tau_j\in M^{\tau_k}_{\alpha_k}$, then the sequence
    \[
    \la (M^{\min\{\tau_i,\tau_j\}}_{\alpha_i},\min\{\tau_i,\tau_j\})\mid k\leq i<j\ra
    \]
    is a $\tau_j$-sequence and an element of $M^{\tau_j}_{\alpha_j}$.
\end{lemma}
\begin{proof}
    The fact that the sequence is an element of $M^{\tau_j}_{\alpha_j}$ follows from the fact that $M^{\tau_j}_{\alpha_j}$ is closed for countable sequences and using Claim \ref{claim:tausq}.
    
    We first show that if $i$ satisfies $k\leq i<j$, then $M^{\min\{\tau_i,\tau_j\}}_{\alpha_i}$ is well-defined, i.e. $\vec{M}^{\min\{\tau_i,\tau_j\}}$-reflective. For this, it suffices to show that if $\tau_j<\tau_i$, then
    \[
    \tau_j\in M^{\tau_i}_{\alpha_i},
    \]
    since this implies that $\alpha_i$ belongs to the sequence relevant to the coordinate $\tau_j$ and thus $M^{\tau_j}_{\alpha_i}$ is defined. To this end, suppose that $i$ satisfies $k\leq  i<j$ and $\tau_j<\tau_i$. If $i=k$, we are done. Otherwise, applying Claim \ref{claim:tausq} to $k<i$, we obtain the conclusion:
    \[
    \tau_j\in M^{\tau_k}_{\alpha_k}\cap(\min\{\tau_k,\tau_i\}+1)\subseteq M^{\tau_i}_{\alpha_i}.
    \]
    This is enough to conclude that each model $M^{\min\{\tau_i,\tau_j\}}_{\alpha_i}$ for $k\leq i<j$ is well-defined.

    We then show that the sequence is a $\tau_j$-sequence. For $i<j$, we denote
    \[
    \tau^j_i:=\min\{\tau_i,\tau_j\}.
    \]
    It suffices to verify that for any $i\in (k,j)$ there is $l\in [k,i)$ such that $\tau^j_i\leq\tau^j_{l}$ and $\tau^j_i\in M^{\tau^j_{l}}_{\alpha_{l}}$. Fix some such $i$. If $\tau_j\leq \tau_i$, then $l:=k$ is as wanted, since then $\tau^j_i=\tau_j$, so in particular $\tau^j_i\leq\tau^j_k$ and $\tau^j_i\in M^{\tau^j_k}_{\alpha_k}$. Suppose then $\tau_i<\tau_j$. Note that $\tau_j\leq\tau_k$ by assumption. Since we started with a $\tau$-sequence, there is $i'<i$ such that $\tau_i\leq \tau_{i'}$ and $\tau_i\in M^{\tau_{i'}}_{\alpha_{i'}}$.
    \begin{itemize}
        \item If $i'\leq k$, then Claim \ref{claim:tausq} gives
        \[
        \tau_i\in M^{\tau_{i'}}_{\alpha_{i'}}\cap(\min\{\tau_{i'},\tau_k\}+1)\subseteq M^{\tau_k}_{\alpha_k},
        \]
        and since $\tau_i,\tau_j\in M^{\tau_k}_{\alpha_k}$ and $\tau_i\leq\tau_j$, we have $\tau_i\in M^{\tau_j}_{\alpha_k}$, so $l:=k$ is as wanted.
        \item Suppose that $k< i'$.
        \begin{itemize}
            \item If $\tau_{i'}\leq\tau_j$, then $l:=i'$ is as wanted, since then $M^{\tau^j_{i'}}_{\alpha_{i'}}=M^{\tau_{i'}}_{\alpha_{i'}}$.
            \item If $\tau_j<\tau_{i'}$, then by Claim \ref{claim:tausq} applied to $k<i'$, we have
            \[
            \tau_j\in M^{\tau_k}_{\alpha_k}\cap(\min\{\tau_k,\tau_{i'}\}+1)\subseteq M^{\tau_{i'}}_{\alpha_{i'}}.
            \]
            So $\tau_i,\tau_j\in M^{\tau_{i'}}_{\alpha_{i'}}$, and since $\tau_i<\tau_j$, we have $\tau_i\in M^{\tau_j}_{\alpha_{i'}}=M^{\tau^j_{i'}}_{\alpha_{i'}}$. Thus $l:=i'$ is as wanted.
        \end{itemize}
    \end{itemize}
    We have now seen that $\la (M^{\tau^j_i}_{\alpha_i},\tau^j_i)\mid k\leq i<j\ra$ is a $\tau_j$-sequence in $M^{\tau_j}_{\alpha_j}$.
\end{proof}

\begin{definition}${}$

\begin{enumerate}
\item 
Let $p \in \po_\tau$. A pair of countable sequences $\la(M^{\tau_i}_{\alpha_i},\tau_i) \mid i < \nu\ra$ and $\la q^i \mid i < \nu\ra$ is a \emph{$p$-Multi-Extension}, $p$-M.E. in short, if it satisfies 

\begin{enumerate}

\item $p\uhr\tau_i \in D_{\tau_i}(M^{\tau_i}_{\alpha_i})$ for every $i < \nu$. 

 \item $q^i \in \po_{\tau_i} \cap M^{\tau_i}_{\alpha_i}$ and $q^i \leq [p\uhr\tau_i]_{M^{\tau_i}_{\alpha_i}}$ for every $i<\nu$.
 
\item $\la(M^{\tau_i}_{\alpha_i},\tau_i) \mid i < \nu\ra$ is a $\tau$-sequence and is an initial closure of its $p$-closure.

\item For each non-zero $j < \nu$, there is $k<j$ such that denoting
$\tau_i^j  =\min\{\tau_i,\tau_j\}$ then the sequence
$\la(M^{\tau^j_i}_{\alpha_i},\tau^j_i) \mid k\leq i < j\ra$ is a $\tau_j$-sequence.

    \item\label{ME:iteme} For every $i < j< \nu$, $q^j \uhr \tau^j_i \leq q^i \uhr \tau^j_i$.

\item For every $i < \nu$ and $s \in (\dom(f^p_{\tau_i}) \cap M^{\tau_i}_{\alpha_i})-\bigcup_{j<i}V_{\alpha_j}$, $q^i$ decides the following information about $b_s$:
\begin{itemize}
    \item the meet $m(s,s')$ in $S_{\tau_i}$ for every $s' \in  \dom(f^{q^j}_{\tau_i})$ for any $j < i$ with $\tau_i\in\supp(q^i)$. 
    \item the implicit image $t(s,s')$ of $m(s,s')$ (as described in Remark  \ref{Remark:ImagesOfMeets}).
    
    \item the exit node $\bar{s}_\beta:=e(s,M^{\tau_i}_\beta)$ for every structure $M^{\tau_i}_\beta$ which appears in $q^j$ at coordinate $\tau_i$ and its $M^1_{\bar{s}_\beta}$, for every $j < i$.
\end{itemize}
\item\label{ME:itemg} For all $i<i'<\nu$ and $s\in\dom(f^p_{\tau_i})$ such that $t:=f^p_{\tau_i}(s)\in M^{\tau_{i'}}_{\alpha_{i'}}$, the collapse coordinate of $q^{i'}$ forces the following information about $b_t$:
\begin{itemize}
    \item the implicit image $t(s,s')$ is the meet of $f^p_{\tau_i}(s)$ and $f^{q^j}_{\tau_i}(s')$ in $\dot T$, for any $s'\in\dom(f^{q^j}_{\tau_i})$ and $j<i$ with $\tau_i\in\supp(q^j)$,
    \item for every structure $M^{\tau_i}_\beta$ which appears in $q^j$ at coordinate $\tau_i$ for any $j<i$, there is an exit node $\bar{t}_\beta$ from $M^{\tau_i}_\beta$ at the height of $\bar{s}_\beta$ that is below $t$ and satisfies $\gamma_{\bar{t}_\beta})=\kappa\cap M^1_{\bar{s}_\beta}$, and furthermore for every $s'\in\dom(f^{q^j}_{\tau_i})$,
    \[
    t\wedge f^{q^j}_{\tau_i}(s')=\bar{t}_\beta\wedge f^{q^j}_{\tau_i}(s').
    \]
\end{itemize}
\end{enumerate}

\item The \emph{simple amalgamation} of a sequence of conditions $\la q^i \mid i < \nu\ra$ is the pair $(f,N)$ defined by  
\begin{itemize} 
\item $\dom(f) = \bigcup_{i < \nu} \supp(q^i)$.
\item for each $\delta \in \dom(f)$, $f_\delta = \bigcup\{f^{q^i}_\delta \mid i < \nu \text{ and } \delta \in \supp(q^i)  \}$.  
\item for each $\alpha < \kappa$, the set $N$ includes a pair $(\alpha,a_\alpha)$ if and only if there is some pair $(\alpha,a')$  in $\bigcup_i N^{q^i}$, in which case $a_\alpha$ is taken to be the union of all such $a'$. \end{itemize}
\end{enumerate}
\end{definition}

\begin{lemma}\label{Lemma:PropernessForSequences}${}$\\
For every $p \in \po_\tau$ and $p$-M.E. pair of sequences $\la(M^{\tau_i}_{\alpha_i},\tau_i) \mid i < \nu\ra$ and $\la q^i \mid i < \nu\ra$ there is a condition $q \in \po_\tau$ such that $q \leq p$ and for each $i < \nu$, 
$q\uhr\tau_i \leq q^i$.\\
\end{lemma}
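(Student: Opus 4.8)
The plan is to prove the lemma by induction on $\tau$, the successor step carrying the weight, with the flexibility lemmas of Section~\ref{Section:BuildingTree} and the \emph{Node Density} construction from the proof of Lemma~\ref{Lemma:DensityMainLemma} as the main tools. First I would dispose of the easy configurations. If $\nu=0$, take $q=p$. If $\tau=\delta+1$ and no $\tau_i$ equals $\tau$, or if $\tau$ is a limit ordinal with $\cof(\tau)>\omega$, then (all supports being countable and the sequence countable) every piece of data involved is bounded below some $\delta'<\tau$; apply the induction hypothesis at $\delta'$ to obtain $q_0\le p\uhr\delta'$ with $q_0\uhr\tau_i\le q^i$ for all $i$, and lift it to $\po_\tau$ via Lemma~\ref{Lemma:SimpleAmalgamationEasy}, the simple amalgamation of $q_0$ and $p$. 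If $\tau$ is a limit with $\cof(\tau)=\omega$, fix a cofinal sequence $\langle\tau^n\mid n<\omega\rangle$ and, exploiting the $\sigma$-closure of the iteration, build a $\le$-decreasing sequence $\langle q^{(n)}\mid n<\omega\rangle$ with $q^{(n)}\in\po_{\tau^n}$, $q^{(n)}\le p\uhr\tau^n$, and $q^{(n)}\le q^i\uhr\tau^n$ for every $i$ with $\tau_i\le\tau^n$, each stage being an instance of the induction hypothesis at $\tau^n$ applied to the truncation of the $p$-M.E.\ data to $\tau^n$. The union $q=\bigcup_n q^{(n)}$ is a condition, since $q\uhr\delta$ lies in $\po_{\tau^n}$ whenever $\delta<\tau^n$, and $q\uhr\tau_i\le q^i$ as required.

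It then remains to treat $\tau=\delta+1$ with $\tau_{i^*}=\delta+1$ for at least one $i^*$. I would first descend to $\po_\delta$: form the sequence obtained from $\langle(\tau_i,M^{\tau_i}_{\alpha_i})\mid i<\nu\rangle$ by replacing every top pair $(\delta+1,M^{\delta+1}_{\alpha_i})$ (those with $\tau_i=\delta+1$) by $(\delta,M^{\delta}_{\alpha_i})$ and the corresponding $q^i$ by $q^i\uhr\delta$, and leaving the pairs with $\tau_i\le\delta$ unchanged. Using $M^{\delta+1}_{\alpha_i}\cap\delta=M^{\delta}_{\alpha_i}\cap\delta=\psi_\delta``\alpha_i$, the implication ``$p\in D_{\delta+1}(M^{\delta+1}_{\alpha_i})\Rightarrow p\uhr\delta\in D_{\delta}(M^{\delta}_{\alpha_i})$'' noted in the proof of Lemma~\ref{Lemma:DensityMainLemma}, and the identity $[p]_{M^{\delta+1}_{\alpha_i}}\uhr\delta=[p\uhr\delta]_{M^{\delta}_{\alpha_i}}$, one checks this is a genuine $(p\uhr\delta)$-M.E.\ pair (clauses (a)--(e) all survive the modification). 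By the induction hypothesis at $\delta$ there is $r\le p\uhr\delta$ with $r\uhr\tau_i\le q^i$ when $\tau_i\le\delta$ and $r\le q^i\uhr\delta$ when $\tau_i=\delta+1$; in particular $r\le[p\uhr\delta]_{M^{\delta}_{\alpha_i}}$ for all $i$, so the collapse coordinate $f^r_0$ extends every $f^{q^i}_0$.

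Now I would construct the $\delta$-th coordinate by running, for the whole (countable) family of new nodes at once, the Node Density argument from the proof of Lemma~\ref{Lemma:DensityMainLemma}. Put $f^{q}_\delta:=f^p_\delta\cup\bigcup\{f^{q^i}_\delta\mid\tau_i=\delta+1\}$; this is a well-defined countable partial function because the $q^i$ with $\tau_i=\delta+1$ are $\le$-decreasing (M.E.\ clause (d)) and each agrees with $f^p_\delta$ on $M^{\delta+1}_{\alpha_i}\supseteq\dom(f^{q^i}_\delta)$. The pair carrying this $\delta$-coordinate need not yet satisfy Inductive Assumptions V, VI, VII, so I extend $r$ inside $\po_\delta$ (using $\sigma$-closure and Node Density) to decide, for every new node $s$ and every $s'$ in $\dom(f^p_\delta)$ or in some $f^{q^i}_\delta$: the meet $m(s,s')$ in $S_\delta$, its implicit image $t(s,s')$ (Remark~\ref{Remark:ImagesOfMeets}), the exit nodes of $s$ from every structure appearing in $p$ or in some $q^i$, the structure $M^s_1$, and the value $\beta_{f^{q}_\delta}(s)$ when it exists --- simultaneously adding each $M^\delta_{\beta_{f^q_\delta}(s)}$ to the side-condition part, which is possible by amenability exactly as in Lemma~\ref{Lemma:DensityMainLemma}. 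Finally, as in that proof, I extend $f^r_0$ to $f^*_0$ so that for each relevant image node $t$ the branch $b_t$ is steered through the prescribed implicit images $t(s,s')$ and through exit nodes of the correct levels; here one invokes the flexibility lemmas (Lemma~\ref{Lemma:FlexibleProjections2}, and Lemma~\ref{Lemma:FlexibleProjections3} where image nodes from distinct source trees $S_{\tau_i}$ must be handled together), the point being that the relevant slices of the collapse generic are mutually generic because the functions $h_\gamma$ disagree almost everywhere. Let $q$ be the resulting pair: $q\uhr\delta$ is this final extension of $r$ (with the added side-condition slices), $f^q_\delta$ as above, and the side-condition part the union of all the relevant $N$-parts. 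Then $q$ satisfies V, VI, VII at $\delta$; since $q\uhr\delta\le p\uhr\delta$, $f^q_\delta\supseteq f^p_\delta$, and every structure of $p$ appears in $q$, we get $q\le p$; for $\tau_i=\delta+1$ we have $q\uhr\delta\le q^i\uhr\delta$, $f^q_\delta\supseteq f^{q^i}_\delta$, and every structure of $q^i$ appears in $q$, hence $q\uhr\tau_i\le q^i$; and for $\tau_i\le\delta$, $q\uhr\tau_i=r\uhr\tau_i\le q^i$.

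The hard part will be the final step of the successor case: verifying that $q$ is a genuine condition while simultaneously (a) absorbing the new nodes $s'\in\dom(f^{q^i}_\delta)$ whose images already carry collapse indices that were fixed inside $M^{\delta+1}_{\alpha_i}$, (b) choosing fresh images for the newly-created meets in a way consistent with (a) and with the implicit images forced by $p$, and (c) arranging that adding the structures $M^\delta_{\beta_{f^q_\delta}(s)}$ demanded by Assumption~VI does not contradict the membership and exit-node commitments already made by the various $q^i$'s --- the ``excluded intervals'' phenomenon discussed before Definition~\ref{Def:M^f_delta}, which is resolved because the images of the interior nodes automatically witness the required exclusions. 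Granting the flexibility lemmas and the Node Density machinery, which are available by the induction, this amounts to a careful but bounded bookkeeping; I would organize it by first fixing the (countable) list of image nodes and required constraints, then a single application of the multi-node flexibility lemmas to the collapse coordinate.
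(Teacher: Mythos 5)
Your proposal takes a genuinely different inductive strategy from the paper's, and I believe it contains a gap that it doesn't resolve. The paper's proof runs a double induction on pairs $(\tau,\alpha)$ in lexicographic order: it first enriches the given $\tau$-sequence to the whole $p$-closure, and then constructs the auxiliary conditions $q^\eta$ pair-by-pair, with the crucial step $q^{\eta,1}\mapsto q^{\eta,2}$ being a recursive application of the lemma at a strictly $<_{lex}$-smaller pair, done \emph{after} the collapse part has been steered at stage $\eta$ so that the branches $b_{t_n}$ of image nodes whose \emph{first} appearance is in $M^{\tau_\eta}_{\alpha_\eta}$ pass through the implicit images $t(s_n,s')$ and exit nodes already committed by earlier stages $j<\eta$. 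It is precisely this interleaving --- steer \emph{first}, recurse \emph{second}, at each pair of the $p$-closure --- together with the freshness of the collapse indices of $t_n$ relative to the earlier structures $M^{\tau_j}_{\alpha_j}$, that guarantees the final object is a local extension of $p$ with projections (Definition~\ref{Def:LocalExtension}), after which Lemma~\ref{Lemma:SimpleAmalgamationWorks} delivers the common extension. You instead do a single induction on $\tau$ alone: truncate everything to $\delta$, run the induction there as a black box to produce $r$, \emph{then} try to steer the collapse coordinate and decide meets in a single post-hoc bookkeeping pass.

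The gap is in that post-hoc pass. After the truncated induction, $r$ already extends each $f^{q^i}_0$; for a new image node $t\in\rng(f^{q^i}_\delta)\setminus\rng(f^p_\delta)$, the collapse index of $t$ lies \emph{inside} $M^{\tau_i}_{\alpha_i}$, so $r$ has already committed partial information about $b_t$. Moreover, $r$ may well have further extended these coordinates while meeting the unrelated demands of the truncated M.E., and you give no argument that enough room remains to steer $b_t$ through the newly-computed implicit images $t(s,s')$ for $s\in\dom(f^p_\delta)\setminus M^{\tau_i}_{\alpha_i}$. Note also that Lemma~\ref{Lemma:FlexibleProjections3}, which you invoke, requires the collapse index of $t_n$ to lie \emph{outside} the structure $M_n$; that hypothesis fails for these nodes and so the lemma does not apply in the form stated. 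You identify the problems (a)--(c) at the end as ``a careful but bounded bookkeeping,'' but they are exactly the content of the paper's $p$-closure enrichment and the per-pair recursion: the claim ``$q$ satisfies V, VI, VII at $\delta$'' never gets verified, and in fact to verify it one needs the local-extension-with-projections criterion --- e.g., for Assumption~VI.2 one has to show $f^{q^i}_\delta\subseteq M^\delta_{\alpha'}$ whenever $M^\delta_{\alpha'}$ appears in $p$ but not in $q^i$, and for Assumption~V one has to show that the meet of $f^p_\delta(s)$ and $f^{q^i}_\delta(s')$ in $T$ sits at the same level as $m(s,s')$ in $S_\delta$, which is not automatic once $r$ has decided $S_\delta$ and $T$ independently. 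Establishing this amounts to reconstructing the paper's staged recursion through the $p$-closure, so the apparent simplification to a single induction on $\tau$ is not available without essentially redoing the harder argument.
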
                                                                                                                    
\begin{proof}
The proof is by induction on pairs $(\tau, \alpha)$ (with the usual lexicographic ordering) where $\alpha < \kappa$ is minimal so that $\alpha \geq \cup_{i < \nu} \alpha_i$, and $p \in M^\tau_\alpha$. 


Let $\la (\tau_j,M^{\tau_j}_{\alpha_j}) \mid j < \nu^*\ra$ be the $p$-closure of $\la (M^{\tau_i}_{\alpha_i},\tau_i) \mid i < \nu\ra$. Denote
\[
\tau_{\nu^*}:=\tau\text{ and }\alpha_{\nu^*}:=\kappa,
\]
with the convention that $M^\tau_\kappa=Hull^{\mathcal{A}^\tau}(\kappa)$.
By the assumption, the $p$-closure is an end extension of $\la (M^{\tau_i}_{\alpha_i},\tau_i) \mid i < \nu\ra$.
Moreover, it follows from the definition of super-nice conditions and the fact $p\uhr \tau_i \in D_{\tau_i}(M^{\tau_i}_{\alpha_i})$ for all $i < \nu$ that $p\uhr \tau_j \in  D_{\tau_j}(M^{\tau_j}_{\alpha_j})$ for all $j < \nu^*$.

\noindent
Our next step is to extend the corresponding sequence of conditions $\la q^i \mid i < \nu\ra$ to a sequence $\la q^{j} \mid j \leq \nu^*\ra$, so that 
$\la (\tau_j,M^{\tau_j}_{\alpha_j}) \mid j \leq \nu^*\ra$  and $\la q^{j} \mid j \leq  \nu^*\ra$ form a $p$-Multi-Extension.
Suppose that  $\la q^j \mid j < \eta\ra$ has been defined for some $\nu \leq \eta \leq \nu^*$, 
such that $\la (\tau_j,M^{\tau_j}_{\alpha_j}) \mid j < \eta\ra$ and $\la q^j \mid j < \eta\ra$  are $p$-M.E.
We define $q^\eta$ in four steps that go through auxiliary conditions ${q}^{\eta,0}$ and ${q}^{\eta,1}$. The entire construction happens inside $M^{\tau_\eta}_{\alpha_\eta}$.

\begin{itemize}
    \item[(${q}^{\eta,0}$)]
    Let ${q}^{\eta,0} = [p \uhr \tau_\eta]_{M^{\tau_\eta}_{\alpha_\eta}}$. Also, denote for each $j < \eta$, $\tau^\eta_j = \min(\tau_j,\tau_\eta)$. 
    It follows from the definition of a $p$-M.E. pair and the fact 
    $p\uhr \tau_j \in D_{\tau_j}(M^{\tau_j}_{\alpha_j})$ for all $j < \eta$, that there is $k<\eta$ such that the pair of sequences $\la (M^{\tau^\eta_j}_{\alpha_j}, \tau^\eta_j) \mid k\leq  j < \eta\ra$ and $\la q^j\uhr \tau^\eta_j \mid j < \eta\ra$ forms a 
    $q^{\eta,0}$-M.E.

    \item[(${q}^{\eta,1}$)]
      To obtain ${q}^{\eta,1}$, we extend the collapse part  of $q^{\eta,0}$ to make decisions about nodes in $T$ that fit relevant decisions about meets and exit nodes in various trees $S_\delta$, which were made by previous conditions $q^j$, $j < \eta$, according to property (\ref{ME:itemg}) in the definition of $M.E.$ sequences. 
      More precisely, 
      let $\la (s_n,t_n) \mid n < M\ra_{n < M}$, $M \leq \omega$, enumerate all the pairs $(s,t)$ for which there is $j < \eta$ such that
      \begin{itemize}
          \item $(s,t)\in f^p_{\tau_j}$,
          \item $t\in M^{\tau_\eta}_{\alpha_\eta}-\bigcup_{j<\eta}M^{\tau_j}_{\alpha_j}$,
      \end{itemize}
      Note that it follows that $\gamma_t\leq\sup_{j<\eta}\alpha_j$, since the model $M^1_s$ occurs in the $\tau$-sequence before $M^{\tau_j}_{\alpha_j}$ and $\gamma_t=\kappa\cap M^1_s$.
      
      By Lemma \ref{lem:freem}, there is an extension $q^{\eta,1}\leq q^{\eta,0}$ such that for every $j<\eta$, it is super-nice with respect to every $M^{\tau^\eta_j}_{\alpha_j}$, satisfies $[q^{\eta,1}]_{M^{\tau^\eta_j}_{\alpha_j}}=[q^{\eta,0}]_{M^{\tau^\eta_j}_{\alpha_j}}$ and forces the information satisfying (\ref{ME:itemg}) for $t_n$, $n<M$.

    \item[(${q}^{\eta}$)] This step breaks into two cases: either $\tau_\eta=\tau$, or $\tau_\eta<\tau$.
    
    \item[${}$]\textbf{Case $\boldsymbol{\tau_\eta<\tau}$:}
    We want to apply the inductive assumption of the lemma inside the structure $M^{\tau_\eta}_{\alpha_\eta}$. To this end, we claim that the sequences $\la q^{\eta}\rest\tau^\eta_j:j<\eta\ra$ and $\la(M^{\tau^\eta_j}_{\alpha_j},\tau^\eta_j):k\leq j<\eta\ra$ form a $q^{\eta,1}$-Multi-Extension. But this follows from Lemma \ref{lem:restau} and the fact that, by assumption, they form a $q^{\eta,0}$-Multi-Extension, which implies together with the fact that the condition $q^{\eta,1}$ is super-nice with respect to each $M^{\tau^\eta_j}_{\alpha_j}$, and was obtained from $q^{\eta,0}$ by extending only the collapse coordinate, that they also form a $q^{\eta,1}$-Multi-Extension. 
    
    Thus, by induction hypothesis applied inside the model $M^{\tau_\eta}_{\alpha_\eta}$, there is a condition $q^{\eta,2}\in\po_{\tau_\eta}\cap M^{\tau_\eta}_{\alpha_\eta}$ that extends each $q^j\rest\tau^\eta_j$ and $q^{\eta,1}$. 
    
    Finally, working inside $M^{\tau_\eta}_{\alpha_\eta}$, we find the final extension $q^\eta\leq q^{\eta,2}$ which determines the following information for every $s\in(\dom(f^p_{\tau_\eta})\cap M^{\tau_\eta}_{\alpha_\eta})-\bigcup_{j<\eta}V_{\alpha_j}$:
        \begin{itemize}
            \item The meets of all pairs of nodes $m(s,s')$ where $s' \in \bigcup_{i < j}\dom(f^{q^{i}}
            _{\tau_\eta})$
            \item The implicit images $t(s,s')$ of $m(s,s')$ (see Remark \ref{Remark:ImagesOfMeets}).
            \item For every $j<\eta$ with $\tau_\eta\in\supp(q^j)$ and every structure $M^{\tau_\eta}_{\beta}$ that appears in $q^j$, the exit node below $s$ from $M^{\tau_\eta}_\beta$.
        \end{itemize}

    \item[${}$]\textbf{Case $\boldsymbol{\tau_\eta=\tau}$:} 
    First, let $q'$ be the simple amalgamation of the conditions $\la q^j:j<\eta\ra$. It follows from item (\ref{ME:iteme}) the definition of Multi-Extension that $q'$ is a condition in $\po_{\sup_{j<\eta}\tau_j}$. Then, for every $\gamma\in\tau\cap M^\tau_{\alpha_\eta}$, let $f^1_\gamma$ be the extension of the function $f^{q'}_\gamma$ by the pairs
    \[
    (s,t),(m(s,s'),t(s,s'))
    \]
    where $(s,t)\in f^{q^{\eta,0}}_\gamma$ and $s'\in\dom(f^{q^j}_{\gamma})$, $j<\eta$. By construction, $q'\rest\gamma$ decides meets in the domain of $f^1_\gamma$ and forces that $f^1_\gamma$ is a level- and meet-preserving injective tree-embedding from $\dot S_\gamma\cap M^\tau_{\alpha_\eta}$ to $\dot T$.
    Furthermore, still by construction, using item \ref{ME:itemg}, for every $s\in\dom(f^{q^{\eta,1}}_\gamma)$ and model $M^\gamma_\beta$ that appears in $q^j$ at coordinate $\gamma$ for any $j<\eta$, the condition $q'\rest\gamma$ decides the exit node $\bar{s}_\beta$ from $M^\gamma_\beta$ below $s$, and there is a ``image" $\bar{t}_\beta$ such that if we let $f^2_\gamma$ to be the extension of $f^1_\gamma$ by the pairs
    \[
    (\bar{s}_\beta,\bar{t}_\beta),
    \]
    where $s\in\dom(f^{q^{\eta,1}}_\gamma)$ and $M^\gamma_\beta$ appears in $q^j$ at coordinate $\gamma$ for some $j<\eta$.

    Finally, let $q^{\eta}$ be the condition obtained by extending $q^{\eta,1}$ at each coordinate $\gamma\in\tau\cap M^\tau_{\alpha_\eta}$ by the function $f^2_\gamma$. By construction, $q^\eta$ is a condition in $\po_\tau\cap M^\tau_{\alpha_\eta}$.
    
\end{itemize}

\vspace{7pt}

\noindent It is clear from the construction that the sequences
$\la (M^{\tau_j}_{\alpha_j},\tau_j) \mid j \leq  \eta\ra$ and
$\la q^j \mid j \leq  \eta\ra$ constitute a $[p\rest\tau_\eta]_{M^{\tau_\eta}_{\alpha_\eta}}$-Multi-Extension.

This concludes the recursion. Now, by construction, the condition $q^{\nu^*}$ is a condition in $\po_\tau$ that extends $p$ and each $q^j$.

\end{proof}

We can now prove Proposition \ref{Proposition:MainStrongProper}.
\begin{proof}[Proof of Proposition \ref{Proposition:MainStrongProper}]
Let $p \in D_{\tau}(M^\tau_\alpha)$. To show that $[p]_{M^\tau_\alpha}$ is a residue for $p$, we need to verify that it is compatible with every condition $w \in \po_\tau \cap M^\tau_\alpha$ which extends $[p]_{M^\tau_\alpha}$. This is an immediate consequence of the previous Lemma with the $p$-M.E. pair of sequences of length $\nu = 1$ with $(M^{\tau_0}_{\alpha_0},\tau_0) = (M^\tau_\alpha,\tau)$, and $q^0 = w$. 
\end{proof}

\noindent Phrased using notation from \ref{de:EN}, Proposition \ref{Proposition:MainStrongProper} says that if $p\in\po_\tau$ and $\alpha\in E^p_\tau$, then $p$ has a residue into $M^\tau_\alpha$.

\begin{theorem}\label{Theorem-Main-Sec1}
Let $\po_{\kappa^+} = \bigcup_{\tau < \kappa^+} \po_\tau$. 
\begin{enumerate}
\item $\po_{\kappa^+}$ has $\kappa^+$-c.c. 
\item $\po_{\kappa^+}$ is $\sigma$-closed and thus, does not collapse $\omega_1$.
\item $\po_{\kappa^+}$ collapses all cardinals between $\omega_1$ and $\kappa$.
\item  $\po_{\kappa^+}$ does not collapse $\kappa$ 
\item $2^{\aleph_1} = \kappa^+$ in $V^{\po_{\kappa^+}}$.
\item For each $\tau < \kappa^+$, the $\tau$-th wide-tree $S^\tau$ chosen by the book-keeping function $\Psi$ embeds into $T$. In particular, if $\Psi$ covers all $\po_\tau$ names of wide trees on $\kappa$ for all $\tau < \kappa^+$, then $T$ is a maximal wide tree in the generic extension by $\po_{\kappa^+}$.
\end{enumerate}
\end{theorem}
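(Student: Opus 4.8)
The plan is to handle the six items in the order $(2),(3)$ (pure combinatorics of conditions), then $(1),(4)$ (the substantive part, resting on the strong properness of the present section), and finally $(5),(6)$ (read off from $(1)$, from Node Density, and from Inductive Assumption V). For $(2)$ I would take a decreasing $\omega$-chain $\langle p_n\rangle_{n<\omega}$ and form the coordinatewise union: $\supp(p)=\bigcup_n\supp(p_n)$, $f^p_\gamma=\bigcup_n f^{p_n}_\gamma$, and $N^p$ consisting of the pairs $(\alpha,\bigcup_n a^{p_n}_\alpha)$; each $f^p_\gamma$ is again a countable partial function and each $a^p_\alpha$ a countable increasing union of $\alpha$-closed sets of size $\le\alpha$ (hence again $\alpha$-closed of size $\le\alpha$), and every instance of Inductive Assumptions V--VII for $p\uhr\delta$ is already witnessed by some $p_n\uhr\delta$, so $p\in\po_{\kappa^+}$ is a lower bound; in particular $\omega_1$ is preserved. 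For $(3)$ I would check that $p\mapsto f^p_0$ is a reduction of $p$ to $\po_0=\col(\omega_1,<\kappa)$: if $r\le f^p_0$ in $\po_0$, replacing the first coordinate of $p$ by $r$ again yields a condition — Assumptions V--VII only refer to what $p\uhr\delta'$ \emph{forces}, and strengthening the collapse part forces more — and it extends $p$ together with (the image of) $r$. Thus $\col(\omega_1,<\kappa)$ is a regular subforcing of $\po_{\kappa^+}$, so $\po_{\kappa^+}$ collapses every cardinal in $(\omega_1,\kappa)$ to $\omega_1$.

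Items $(1)$ and $(4)$ both rely on Proposition~\ref{Proposition:MainStrongProper} and its Corollary: every condition amenable to a structure $M^\tau_\alpha\in\vec M^\tau$ is strongly proper with respect to it, with residue $p\mapsto[p]_{M^\tau_\alpha}$ on the $\sigma$-closed dense set $D_\tau(M^\tau_\alpha)$, and $\po_\tau\cap M^\tau_\alpha$ a regular subforcing below such $p$. For the $\kappa^+$-chain condition: given a putative antichain $\{p_\xi:\xi<\kappa^+\}$, using that $\po_{\kappa^+}=\bigcup_{\tau<\kappa^+}\po_\tau$ is a continuous increasing union of posets of size $\kappa$, an elementarity/pressing-down argument (combined with density of super-nice conditions, Lemma~\ref{Lemma:DensityMainLemma}) produces one $\vec M^\tau$-reflective structure $M=M^\tau_\alpha$ and two members $p_\xi,p_{\xi'}$ of the antichain which, after being extended into $D_\tau(M)$ amenably to $M$, have coinciding traces in the fixed set $\po_\tau\cap M$ of size $<\kappa$, with $p_{\xi'}$ itself reflected into $M$; then $p_{\xi'}\le[p_{\xi'}]_M=[p_\xi]_M$ with $p_{\xi'}\in\po_\tau\cap M$, so by the residue property $p_\xi$ and $p_{\xi'}$ are compatible, a contradiction. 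For preservation of $\kappa$, suppose toward a contradiction that $p$ forces $\dot c\colon\lambda\to\kappa$ cofinal for some $\lambda<\kappa$ (by $(2)$, $(3)$ this is the only way $\kappa$ could be collapsed); by the $\kappa^+$-chain condition and continuity of $\langle\po_\tau\rangle$ one may take $\dot c$ and $p$ inside some $\po_\tau$, pick $\beta\in\dom(\vec M^\tau)$, $\beta>\lambda$, large enough that $M^\tau_\beta$ reflects $\dot c$ and $p$ has an extension $q\le p$ in $D_\tau(M^\tau_\beta)$ amenable to $M^\tau_\beta$; then strong properness of $q$ forces, for every $\eta<\lambda$, that the dense set of conditions deciding $\dot c(\eta)$ — an element of $M^\tau_\beta$ — is met inside $M^\tau_\beta$, so $\dot c(\eta)$ is forced into $M^\tau_\beta\cap\kappa=\beta$, whence $q\Vdash\mathrm{ran}(\dot c)\subseteq\beta<\kappa$, contradicting cofinality. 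Running the same argument inside a single $\po_\tau$ shows each $\po_\tau$ is $\kappa$-preserving and $\kappa^+$-c.c., so that $\po_\tau\in\dom(\Psi)$. \emph{I expect the main obstacle to be exactly the preservation of $\kappa$ (and, intertwined with it, the $\kappa^+$-c.c.)}: the delicate point is arranging that the strong-properness family built from the reflecting sequences $\vec M^\tau$ is rich enough to reflect an \emph{arbitrary} potential collapsing map $\dot c$ — which forces one to first localize $\dot c$ to a single $\po_\tau$ via the chain condition, then invoke density of super-nice amenable conditions and a careful choice of $\beta$ avoiding the countably many structures already constrained by $p$.

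For $(5)$, the upper bound $2^{\aleph_1}\le\kappa^+$ follows from $(1)$ and $|\po_{\kappa^+}|=\kappa^+$ by counting nice names for subsets of $\omega_1$: there are at most $(\kappa^+)^{\kappa}=2^{\kappa}$ antichains, hence at most $2^{\kappa}$ such names, which equals $\kappa^+$ under GCH in $V$ (which we may assume in the ground model of the consistency statement); the lower bound $2^{\aleph_1}\ge\kappa^+$ follows from $(6)$, since if $2^{\aleph_1}=\aleph_2=\kappa$ then $\aleph_2^{<\aleph_2}=\aleph_2$ and Kurepa's $\sigma$-operation produces a wide $\kappa$-Aronszajn tree $\sigma(T)$ with $|\sigma(T)|\le\kappa$ but $T<\sigma(T)$, hence $\sigma(T)\not\le^* T$, contradicting maximality. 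For $(6)$, fix $\tau<\kappa^+$ and let $f_\tau=\bigcup\{f^p_\tau\colon p\in G\}$ be the generic object at coordinate $\tau$; by the Node Density clause of Lemma~\ref{Lemma:DensityMainLemma}, $f_\tau$ is total on $S_\tau$, and by Inductive Assumption V each $f^p_\tau$, and hence $f_\tau$, sends meets in $S_\tau$ to meets in $T^{h_\tau}$, which forces $f_\tau$ to be an order-preserving \emph{monomorphism} $S_\tau\to T^{h_\tau}\subseteq T$ (if $f_\tau(s_1)=f_\tau(s_2)$ for distinct $s_1,s_2$ with meet $m$, then $f_\tau(m)$ would be both $=f_\tau(s_1)$ and $<_T f_\tau(s_1)$). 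Hence $S_\tau\le^* T$. Finally, since by $(1)$ and the continuity of $\langle\po_\tau\rangle$ every wide $\kappa$-Aronszajn tree of $V[G]$ (relabelled onto $\kappa\times\kappa$) has a nice name that is a $\po_{\tau_0}$-name for some $\tau_0<\kappa^+$ with $\po_{\tau_0}\in\dom(\Psi)$ by $(4)$, a bookkeeping $\Psi$ exhausting all such pairs makes every such tree occur as some $S_\tau$; together with the fact that $T$ acquires no cofinal branch (Section~\ref{Section:NoNewBranches}), this shows $T$ is a maximal wide $\kappa$-Aronszajn tree.
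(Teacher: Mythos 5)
Your arguments for items (1)--(4) and (6) match the paper's approach in spirit; the paper leaves these mostly as one-line assertions (``standard,'' ``immediate''), so the detail you supply --- the coordinatewise-union verification of $\sigma$-closure in (2), the observation that $p\mapsto f^p_0$ is a reduction to $\col(\omega_1,<\kappa)$ for (3), the pressing-down plus residue-compatibility argument for (1), and the localization-via-c.c.\ plus strong-properness argument for (4) --- is all in the right direction and correctly identifies Proposition~\ref{Proposition:MainStrongProper} as the load-bearing ingredient.

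Item (5) is where you genuinely diverge from the paper, and not to your advantage. For the lower bound $2^{\aleph_1}\ge\kappa^+$ you argue indirectly via Kurepa's $\sigma$-operation: if $2^{\aleph_1}=\kappa$, then $\kappa^{<\kappa}=\kappa$, so $\sigma(T)$ would be a wide $\kappa$-Aronszajn tree refuting the maximality of $T$. But that presupposes exactly the two things not yet available at this point in the paper: that $T$ remains Aronszajn (Section~\ref{Section:NoNewBranches}, which follows Theorem~\ref{Theorem-Main-Sec1}) and that the bookkeeping $\Psi$ has been chosen to exhaust all names. As stated, item (5) is meant to be a property of the forcing $\po_{\kappa^+}$ itself, independently of $\Psi$ and prior to the no-new-branch analysis. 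The paper's proof meets this by a direct cardinality count: for each $\tau<\kappa^+$, the step $\po_{\tau+1}/\po_\tau$ adds, for any $M^{\tau+1}_\alpha\in\vec{M}^{\tau+1,G(\po_{\tau+1})}$, the set $f_\tau\uhr M^{\tau+1}_\alpha$, which by strong properness is a subset of $M^{\tau+1}_\alpha$ of size $\aleph_1$ that is \emph{generic} over $V[G(\po_\tau)]$ and hence new; together with $\kappa^+$-c.c.\ this gives $2^{\aleph_1}\ge\kappa^+$ unconditionally. Your nice-name count for the upper bound is fine, but you should make explicit the implicit hypothesis $2^\kappa=\kappa^+$ in $V$ (which can be arranged by preliminary GCH-forcing without destroying weak compactness), since the paper never states it.

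A smaller remark on (6): injectivity of $f_\tau$ does not follow from the meet-preservation of Inductive Assumption~V alone, since a meet-preserving map can collapse a comparable pair $s_1<s_2$ to a single node. What makes $f_\tau$ a monomorphism is the level-preservation encoded in Inductive Assumption~VII (the first coordinate of the collapse-index of $f^p_\delta(s)$ equals the level of $s$), which together with meet-preservation forbids any identification. The paper is equally terse on this point, so it is a shared omission rather than an error in your proposal, but since you explicitly invoke injectivity you should cite VII as well as V.
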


\begin{proof}${}$
\begin{enumerate}
\item This is a standard consequence of the fact that each $\po_\tau$, $\tau < \kappa^+$ has size $\kappa$, and that there is a stationary set of $\tau < \kappa^+$ for which $\po_\tau$ is a direct limit of $\po_\delta$, $\delta < \tau$  (the set of all limit $\tau < \kappa^+$ of uncountable cofinality).

\item Immediate from the fact each $\po_\tau$, $\tau < \kappa^+$ is $\sigma$-closed (inductive assumption I).

\item Immediate. As $\po_{\kappa^+}$ embeds $\po_0 = \col(\omega_1,<\kappa)$. 

\item This follows from the fact that $\po_{\kappa^+}$ is $\kappa^+$.c.c, and from the strong properness of all $\po_\tau$, $\tau < \kappa^+$, 
for structures of size $\aleph_1$. 

\item For every $\tau < \kappa^+$, the poset $\po_{\tau+1}$ introduces a tree embedding $f_\tau : S_\tau \to T$, so that for every structure $M^{\tau+1}_\alpha \in \vec{M}^{\tau+1,G(\po_{\tau+1})}$, 
$f_\tau \uhr M^{\tau+1}_\alpha$ is generic over $V[G(\po_\tau)]$ and introduces an embedding of $S_\tau \cap M^{\tau+1}_\alpha$ to $T \cap M^{\tau+1}_\alpha$, which is a new subset of $M^{\tau+1}_\alpha$ of size $\aleph_1$. 

\item Immediate by the construction of the posets $\po_\tau$, $\tau < \kappa^+$.
\end{enumerate}
\end{proof}

\subsection{Strong properness of  quotients}

To prove that the forcing does not add new branches to $T$ given the chosen trees $S_\tau$, $\tau < \kappa^+$, are all wide Aronszajn, we need a slightly stronger version of strong properness that can be applied in quotients. The setup to have in mind is having some $\tau < \kappa^+$, $\alpha < \kappa$ and a condition $p \in D_\tau(M^\tau_\alpha)$.

\begin{lemma}\label{lemma:PropernessOfQuotients}
Let $M^\tau_\alpha,M^\tau_\beta \in \vec{M}^\tau$, $\alpha <\beta$.
Suppose that $p \in D_\tau(M^\tau_\beta) \cap D_\tau(M^\tau_\alpha)$. 
Let $G_\alpha \subseteq \po_\tau \cap M^\tau_\alpha$ be generic over $V$ with 
$[p]_{M^\tau_\alpha} \in G_\alpha$.
\begin{enumerate}
    \item 
The quotient $\po_\tau/G_\alpha$ has a $\sigma$-closed dense subset.

\item 
For every $w \in (\po_\tau/ G_\alpha) \cap M_\beta^\tau[G_\alpha]$ with $w \leq [p]_{M^\tau_\beta}$, 
there is a common extension $q \leq_{\po_\tau/G_\alpha} w,p$, such that $q \in D_\tau(M^\tau_\beta)$.
\end{enumerate}
\end{lemma}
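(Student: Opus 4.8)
The plan is to re-run the strong-properness analysis of this section inside the intermediate model $V[G_\alpha]$. Two structural facts underpin everything. First, since $\alpha<\beta$, the structure $M^\tau_\alpha$ (the Skolem hull of $\alpha$ in $\mathcal A^\tau$) is definable in $M^\tau_\beta$ from $\alpha\in M^\tau_\beta$, so $M^\tau_\alpha\in M^\tau_\beta$ and $\po_\tau\cap M^\tau_\alpha\in M^\tau_\beta$; hence $M^\tau_\beta[G_\alpha]\elem (H_{\kappa^{++}})^{V[G_\alpha]}$ with $M^\tau_\beta[G_\alpha]\cap V=M^\tau_\beta$, and whenever $\vec M^\tau$-reflectivity of $M^\tau_\beta$ is needed (as in Lemma~\ref{Lemma:DensityMainLemma}) it is used through $N^\tau_\beta[G_\alpha]$, which reflects the relevant $\Pi_1^1$ statements over $V[G_\alpha]$. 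Second, $\po_\tau\cap M^\tau_\alpha$ is itself $\sigma$-closed (its conditions are countable with all components in the countably closed $M^\tau_\alpha$), so forcing with it adds no $\omega$-sequences of conditions of $\po_\tau$; and by Proposition~\ref{Proposition:MainStrongProper} and its corollary, a condition that is super-nice with respect to $M^\tau_\alpha$ lies in $\po_\tau/G_\alpha$ exactly when its $M^\tau_\alpha$-trace lies in $G_\alpha$.

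For part~(1), the $\sigma$-closed dense subset of $\po_\tau/G_\alpha$ is $\mathcal D=\{\,q\in D_\tau(M^\tau_\alpha): [q]_{M^\tau_\alpha}\in G_\alpha\,\}$. Density in $\po_\tau/G_\alpha$ comes from strong properness of $\po_\tau$ with respect to $M^\tau_\alpha$: given $q_0\in\po_\tau/G_\alpha$, pick $w\in G_\alpha$ compatible with $q_0$, take a common extension amenable to $M^\tau_\alpha$, extend it into $D_\tau(M^\tau_\alpha)$ (Lemma~\ref{Lemma:DensityMainLemma}(2)), and use a density argument in $V$ — conditions of $\po_\tau\cap M^\tau_\alpha$ either decide incompatibility with this extension or refine its trace — together with genericity to arrange the trace to lie in $G_\alpha$. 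For $\sigma$-closedness: a decreasing $\omega$-chain in $\mathcal D$ already lies in $V$; its $M^\tau_\alpha$-traces form a decreasing $\omega$-chain of elements of $G_\alpha$, which has a lower bound $\bar r\in G_\alpha$ because $\po_\tau\cap M^\tau_\alpha$ is $\sigma$-closed; and by Lemma~\ref{Lemma:DensityMainLemma}(2) the chain has a lower bound in $D_\tau(M^\tau_\alpha)$ whose trace may be taken below $\bar r$, hence inside $G_\alpha$.

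For part~(2), I would obtain $q$ by repeating the residue construction of Proposition~\ref{Proposition:MainStrongProper} and Lemma~\ref{Lemma:PropernessForSequences} inside $V[G_\alpha]$, for the poset $\po_\tau/G_\alpha$ (which by part~(1) has a $\sigma$-closed dense subset) and the structure $M^\tau_\beta[G_\alpha]$, starting from $p$ (which is in $\po_\tau/G_\alpha$ since $[p]_{M^\tau_\alpha}\in G_\alpha$ and $p\in D_\tau(M^\tau_\alpha)$) and $w\le[p]_{M^\tau_\beta}$. Concretely, one builds in $V[G_\alpha]$ the analogue of a $p$-M.E. sequence headed by $(\tau,M^\tau_\beta)$ with $q^0=w$, closes under $E(p,M^\tau_\beta)$, amalgamates, and applies the two collapse-adjustment steps and Lemma~\ref{Lemma:SimpleAmalgamationWorks} to obtain a common extension $q$ of $p$ and $w$; carrying the pair $(\tau,M^\tau_\alpha)$ along at the bottom of the sequence (legitimate since $\alpha<\beta$ and $p\in D_\tau(M^\tau_\alpha)$) forces $[q]_{M^\tau_\alpha}$ into $G_\alpha$, so $q\in\po_\tau/G_\alpha$. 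Finally, a further extension of $q$ inside $D_\tau(M^\tau_\alpha)\cap D_\tau(M^\tau_\beta)$, carried out in $\omega$ steps so as to keep the $M^\tau_\alpha$-trace in $G_\alpha$ (as in part~(1)), produces the required $q\in D_\tau(M^\tau_\beta)$.

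The main obstacle — the step I expect to need the most care — is checking that the construction of Lemma~\ref{Lemma:PropernessForSequences} really transfers from $V$ to $V[G_\alpha]$: that the node- and super-nice-density facts of Lemma~\ref{Lemma:DensityMainLemma} and, crucially, the flexible-projection lemmas (Lemmas~\ref{Lemma:FlexibleProjections2} and~\ref{Lemma:FlexibleProjections3}) survive forcing with $\po_\tau\cap M^\tau_\alpha$. The decisive input is a genericity-independence observation: $\col(\omega_1,<\kappa)\cap M^\tau_\alpha$ consists of collapse conditions all of whose coordinates lie below $\alpha$, whereas every node $t=f^p_\delta(s)$ whose branch is adjusted in the construction is the image of an exit node, so by Inductive Assumption~VII its third collapse-coordinate $\gamma$ exceeds $M^s_1\cap\kappa$; since every structure occurring in the $p$-closure of $(\tau,M^\tau_\beta)$ has index $\ge\beta>\alpha$ and $M^s_1$ has index at least that of the structure from which $s$ exits, one gets $\gamma>\alpha$, so the copies $\po(\alpha^*,\beta^*,\gamma)$ of the Levy collapse carrying these branches are disjoint from $\col(\omega_1,<\kappa)\cap M^\tau_\alpha$. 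Thus $G_\alpha$ gives no information about them, Lemmas~\ref{Lemma:FlexibleProjections2} and~\ref{Lemma:FlexibleProjections3} hold over $V[G_\alpha]$ verbatim, and the remainder is a faithful re-reading of the proofs of Proposition~\ref{Proposition:MainStrongProper} and Lemma~\ref{Lemma:PropernessForSequences} with $V,M^\tau_\beta$ replaced by $V[G_\alpha],M^\tau_\beta[G_\alpha]$.
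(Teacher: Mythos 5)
Your overall strategy is genuinely different from the paper's, especially for part (2), and the two approaches have markedly different costs. For part (2), you propose to re-run the entire $p$-M.E. machinery of Lemma~\ref{Lemma:PropernessForSequences} inside $V[G_\alpha]$, which requires you to check that the flexible-projection lemmas, the density lemma, and the simple-amalgamation criterion all survive forcing with $\po_\tau\cap M^\tau_\alpha$. The paper instead proves part (2) by a short contradiction argument that uses Proposition~\ref{Proposition:MainStrongProper} as a black box: supposing some $r\in M^\tau_\beta[G_\alpha]$ extends $w$ and is incompatible with $p$ in the quotient, one finds $w'\in\po_\tau\cap M^\tau_\alpha$ below $[w]_{M^\tau_\alpha}$ forcing this incompatibility, applies strong properness for $M^\tau_\alpha$ to amalgamate $r$ with $w'$ into some $r'\in M^\tau_\beta$, then applies strong properness for $M^\tau_\beta$ to amalgamate $r'$ with $p$ into $q\in D_\tau(M^\tau_\alpha)\cap D_\tau(M^\tau_\beta)$, and finally observes that $[q]_{M^\tau_\alpha}\leq w'$ forces the opposite of what $w'$ was chosen to force. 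This entirely avoids re-entering $V[G_\alpha]$ with the constructive machinery, which is the step your plan correctly flags as the main danger.

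Concerning that step: your genericity-independence observation (that the collapse coordinates of the nodes being adjusted lie outside $M^\tau_\alpha$, so Lemmas~\ref{Lemma:FlexibleProjections2} and~\ref{Lemma:FlexibleProjections3} persist over $V[G_\alpha]$) is the right kind of argument, but as written it is incomplete. You argue from Inductive Assumption~VII that the $\gamma$-coordinate exceeds $M^s_1\cap\kappa$ and that all structures in the $p$-closure of $(\tau,M^\tau_\beta)$ have index $>\alpha$; however, you also propose to ``carry $(\tau,M^\tau_\alpha)$ along at the bottom of the sequence,'' and closing $E(p,M^\tau_\alpha)$ introduces nodes $s$ whose $M^s_1$ a priori need only have index $>\alpha$ — one has to check that the other two coordinates $(\alpha',\beta')$ and the enumeration of valid quadruples still land the relevant $\po(\alpha',\beta',\gamma')$-blocks outside $M^\tau_\alpha\cap\po$, and that the conditions produced at each step of the $p$-M.E. construction keep their $M^\tau_\alpha$-traces inside $G_\alpha$. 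None of this is impossible, but it is precisely the bookkeeping that the paper's contradiction argument lets you skip.

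For part (1) your argument is essentially the paper's, but the last step has the inequality pointing the wrong way. You write that the lower bound $\bar q$ of the chain may be taken so that its trace is \emph{below} $\bar r\in G_\alpha$, ``hence inside $G_\alpha$.'' A filter is upward-closed, not downward-closed, so $[\bar q]_{M^\tau_\alpha}\leq\bar r$ does not place it in $G_\alpha$. What is actually needed — and what the paper means by ``the trace map respects countable joins'' — is that the canonical lower bound $\bar q$ obtained by componentwise union has $[\bar q]_{M^\tau_\alpha}$ equal to the \emph{weakest} lower bound of the traces $[q_n]_{M^\tau_\alpha}$, so that $[\bar q]_{M^\tau_\alpha}\geq\bar r$ and therefore lies in $G_\alpha$ by upward closure. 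With this direction corrected, your part (1) argument goes through.
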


\begin{proof}${}$
\begin{enumerate}
    \item 
The quotient is $\sigma$-closed with respect to conditions in $D_\tau(M^\tau_\alpha)$. This is an immediate consequence of the fact that 
$D_\tau(M^\tau_\alpha)$ is $\sigma$-closed dense below $p$, of Proposition \ref{Proposition:MainStrongProper}, 
and the fact that the trace map $p \mapsto [p]_{M^\tau_\alpha}$ respects countable joins.

\item\label{quo:item2} 
Let $p \in D_\tau(M^\tau_\alpha) \cap D_\tau(M^\tau_\beta)$, $p \in \po_\tau/G_\alpha$ and $w \leq [p]_{M^\tau_\beta}$, $w \in \po_\tau/G_\alpha$, $w \in M^\tau_\beta[G_\alpha]$. By extending $w$ we may assume that it belongs to $D_\tau(M^\tau_\alpha)$, and therefore that $[w]_{M^\tau_\alpha}$ forces that $w$ belongs to the quotient $\po_\tau/G_\alpha$. 

Suppose towards contradiction that there exists an extension $r \in M^\tau_\beta$ of $w$ which is incompatible with $p$ in the quotient forcing $\po_\tau/G_\alpha$. 

This means that in $V$, there is $w' \leq [w]_{M^\tau_\alpha}$, $w' \in \po_\tau \cap M^\tau_\alpha$, which forces that $r$ and $p$ are incompatible as conditions of the quotient forcing, over an extension by $\po_\tau \cap M^\tau_\alpha$.
By strong properness (Proposition \ref{Proposition:MainStrongProper}) with respect to $M^\tau_\alpha$, $r$ and $w'$ are compatible by some $r' \in M^\tau_\beta$. Since $r' \leq r \leq w \leq [p]_{M^\tau_\beta}$ we can apply strong properness again with respect to $M^\tau_\beta$  and conclude that $r'$ and $p$ are compatible by some $q \in D^\tau(M^\tau_\beta) \cap D^\tau(M^\tau_\alpha)$. But now $[q]_{M^\tau_\alpha}$ extends $w'$ and forces contradictory information. 
\end{enumerate}

\end{proof}

We also need an extrapolation of item (\ref{quo:item2}) of the previous lemma. In what follows, we will have a generic filter $G\subseteq\po_\tau\cap M^\tau_\alpha$ and a path from $M^\tau_\alpha$ to some $M^{\tau_i}_{\alpha_i}$, where $\tau_i\leq\tau$. By Claim \ref{claim:tausq} it follows that
\[
\tau_i\cap M^\tau_\alpha\subseteq M^{\tau_i}_{\alpha_i}.
\]
Thus, if $\alpha_i>\alpha$, then the poset $\po_{\tau_i}\cap M^\tau_\alpha$ belongs to $M^{\tau_i}_{\alpha_i}$ and thus filter $G\rest\tau_i=\{p\rest\tau_i:p\in G\}$ is also an element in $M^{\tau_i}_{\alpha_i}$ and $M^{\tau_i}_{\alpha_i}$-generic on $\po_{\tau_i}\cap M^\tau_\alpha$.

\begin{lemma}\label{lem:tau-sq:quo}
    Let $\tau<\kappa^+$. If $p\in\po_\tau$ is super-nice with respect to $M^\tau_\alpha$, $G\subseteq\po_\tau\cap M^\tau_\alpha$ is a generic that contains the trace of $p$ and $\la((M^{\tau_i}_{\alpha_i},\tau_i):i<\nu\ra$ and $\la q^i:i<\nu\ra$ is a $p$-M.E. pair of sequences such that $\alpha_i>\alpha$ and $q^i\in\po_{\tau_i}/G\rest\tau_i$ for every $i<\nu$, then there is $q\in\po_\tau/G$ such that $q\leq p$ and $q\rest\tau_i\leq q^i$ for every $i<\nu$.
\end{lemma}
\begin{proof}
    As the proof of Lemma \ref{Lemma:PropernessForSequences}, but using the above observation along with the stronger inductive assumption that a common extension can be found the in the quotient.
\end{proof}

\section{No new branches}\label{Section:NoNewBranches}

In this section, we prove that if all chosen names $\name{S}^\tau$ are names for wide Aronszajn trees, then $T$ does not get a cofinal branch, and so can become a universal wide Aronszajn tree on $\kappa = \aleph_2$.

If $p$ is a condition in $ \po_\delta$ and $\alpha\in N^p_\gamma$ for every $\gamma\in\delta\cap M^\tau_\alpha$, then automatically $p'$ obtained from $p$ by letting $N^{p'}:=N^p\cup\{(\gamma,\alpha):\gamma\in\tau\cap M^\tau_\alpha\}$ is a condition in $ \po_\tau$ that satisfies $p'\rest\delta=p$. 

\begin{definition}
    Let $\delta\leq\tau$ and let $\alpha$ be such that $M^\tau_\alpha$ belongs to the $\tau$-th sequence of side conditions. A condition $q\in \po_\delta$ is \emph{super-nice with respect to $M^\tau_\alpha$} if $\alpha\in N^q_\gamma$ for every $\gamma\in\delta\cap M^\tau_\alpha$ and the condition $q'$ obtained from $q$ by extending the side conditions by replacing every $(\alpha,a_\alpha)\in N^q$ by
    \[
    \begin{cases}
    (\alpha,a_\alpha\cup\{\gamma\})\quad& \text{if }\gamma\in\tau\cap M^\tau_\alpha,\\
    (\alpha,a_\alpha)&\text{otherwise,}
    \end{cases}
    \]
    is a condition in $ \po_\tau$ that is super-nice with respect to $M^\tau_\alpha$.
\end{definition}

\noindent Also the trace operator from Definition \ref{Def:Trace} is defined for conditions in $\po_\delta$ and models $M^\tau_\alpha$, where possibly $\delta<\tau$. It works as expected in the case of this generalised version of super-niceness:

\begin{lemma}
    Let $\delta\leq \tau$ and let $\alpha<\kappa$. If $p\in \po_\delta$ is super-nice with respect to $M^\tau_\alpha$, then:
    \begin{enumerate}
        \item the conditions that are super-nice with respect to $M^\tau_\alpha$ are dense below $p$ in $ \po_\delta$,
        \item the trace $[p]_{M^\tau_\alpha}$  is a condition in $\po_\delta\cap M^\tau_\alpha$ and
        a residue of $p$ into $M^\tau_\alpha$.
    \end{enumerate}
\end{lemma}

\noindent It follows that if $p\in \po_\delta$ is super-nice with respect to $M^\tau_\alpha$, then every generic $G\subseteq \po_\delta\cap M^\tau_\alpha$ that contains $[p]_{M^\tau_\alpha}$ extends to a generic on $ \po_\delta$ that contains $p$.

\begin{lemma}
    If there is a path from $M^\tau_\alpha$ to $M^\delta_\beta$ (see Definition \ref{de:path}), then the trace map from $\po_\delta\cap M^\delta_\beta$ to $\po_\delta\cap M^\tau_\alpha$ is a residue map and element in $M^\delta_\beta$.
\end{lemma}
\begin{proof}
    Note that if there is a path from $M^\tau_\alpha$ to $M^\delta_\beta$, then $\delta\cap M^\tau_\alpha\subseteq M^\delta_\beta$. Thus, if $p\in\po_\delta\cap M^\delta_\beta$ is super-nice with respect to $M^\tau_\alpha$ and $w\in\po_\delta\cap M^\tau_\alpha$ extends $[p]_{M^\tau_\alpha}$, then $w\in M^\delta_\beta$, and thus by elementarity $w$ and $p$ have a common extension in $\po_\delta\cap M^\delta_\beta$.
\end{proof}

\begin{definition}
    Two conditions $p,q\in\po_\tau$ are said to \emph{split a pair of nodes $(s,s')$} in a tree $\dot S$ if there are distinct nodes $\bar{s}\neq\bar{s}'$ of the same height $\bar{\alpha}$ such that
    \begin{enumerate}
        \item $p\Vdash \bar{s}<_{\dot S}s$ 
        \item $q\Vdash\bar{s}'<_{\dot S}s'$ 
    \end{enumerate}
    We say that $p$ and $q$ split the node $s$ if they split the pair $(s,s)$.
\end{definition}

\begin{lemma}\label{lem:split1step}
    Let $\tau<\kappa^+$ and suppose that $\alpha$ belongs to the $\tau$-th sequence of side conditions. If $G\subseteq \po_\tau\cap M^\tau_\alpha$ is a generic filter  and if $s\in \dot S_{\tau}$ is an exit node from $M^\tau_\alpha$ at a limit level, then the branch below $s$ is not introduced by $G$.
\end{lemma}
\begin{proof}
    Suppose towards contradiction that $b\subseteq (\dot S_\tau\cap V_\alpha)^G$ is forced to be the branch below $s$. By the $\Pi^1_1$-reflection, the tree $(\dot S_\tau\cap V_\alpha)^G$ is a wide $\alpha$-Aronszajn tree. If $s$ has height $\alpha$, then the branch below it cannot be introduced by $G$, for it would be a cofinal branch in $(\dot S_\tau\cap V_\alpha)^G$.
    Thus the height $\bar{\alpha}$ of $s$ must satisfy $\bar{\alpha}<\alpha$. Look at $\bar{\beta}:=$ the minimal ordinal such that $b\subseteq\bar{\beta}\times\bar{\alpha}$. We have $\bar{\beta}\leq\alpha$ because $s$ is an exit node from $M^\tau_\alpha$.
    
    If $\bar{\beta}=\alpha$, then $b$ induces a cofinal function from $\bar{\alpha}$ to $\alpha$, which is absurd since $\alpha=\aleph_2^{V[G]}$. 
    
    Thus $\bar{\beta}<\alpha$. Let $G'$ be a generic on $ \po_\tau$ that extends $G$. By a standard argument the set $M^\tau_\alpha[G']=\{\dot a^{G'}:\dot a\in V^{ \po_\tau}\cap M^\tau_\alpha\}$ is an elementary submodel of $H_{\kappa^{++}}[G']$ and closed under $<\alpha$-sequences. Thus $b\in M^\tau_\alpha[G']$. And by elementarity, $b$ must have a supremum in $M^\tau_\alpha[G']$. Since $s$ is at a limit level, it is the unique supremum of $b$. This is absurd since $s$ is an exit node from $M^\tau_\alpha[G']$.
\end{proof}

\begin{lemma}
    Suppose that there is a path from $M^\tau_\alpha$ to $M^\delta_\beta$. Suppose that $p\in \po_\delta$ and $q\in \po_\delta$ are conditions that are super-nice with respect to $M^\tau_\alpha$ and $[p]_{M^\tau_\alpha}=[q]_{M^\tau_\alpha}$. Suppose also that $p$ is super-nice with respect to $M^\delta_\beta$. For any countable sets $A,B\subseteq\dot S_\delta$ of exit nodes from $M^\delta_\beta$, there are two conditions $\hat{p}\leq p$ and $\hat{q}\leq q$ that split every pair in $A\times B$, are super-nice with respect to $M^\tau_\alpha$ and $[p]_{M^\tau_\alpha}=[q]_{M^\tau_\alpha}$. Furthermore, for any $\beta'>\beta$ such that $A\subseteq M^\delta_{\beta'}$, if $p\in M^\delta_{\beta'}$, then we may assume that $\hat{p}\in M^\delta_{\beta'}$.
\end{lemma}
\begin{proof}
    Fix a generic $G\subseteq\po_\delta\cap M^\tau_\alpha$ that contains the common trace of $p$ and $q$. Enumerate $A\times B$ as $(a_0,b_0),(a_1,b_1),\dots$. By recursion on $n$, define conditions $p_n$ and $q_n$ in $ \po_\delta$. Let $p_0:=p$ and $q_0:=q$. At step $n+1$, assume that we have defined $p_n$ and $q_n$ and they satify:
    \begin{itemize}
        \item $p_n$ and $q_n$ are super-nice with respect to $M^\tau_\alpha$ and $[p_n]_{M^\tau_\alpha}=[q_n]_{M^\tau_\alpha}$,
        \item $p_n$ and $q_n$ split every pair $(a_i,b_i)$, $i<n$,
        \item $p_n$ is super-nice with respect to $M^\delta_\beta$ and $p_n\in M^\delta_{\beta'}$.
    \end{itemize}
    Look at the node $a_n$. The branch below $a_n$ is not introduced by $ \po_\delta\cap M^\delta_\beta$ by Lemma \ref{lem:split1step}, so it is not introduced by $ \po_\delta\cap M^\tau_\alpha$ either. Thus there are $p^L,p^R\leq p_n$ in $ \po_\delta/G$ that split $a_n$, at some level $\bar{\beta}<\beta$ with some distinct nodes $a^L$ and $a^R$. Since $A\subseteq M^\delta_{\beta'}$, by elementarity we may choose $p^L,p^R\in M^\delta_{\beta'}$. Find $q_{n+1}\leq q_n$ in $ \po_\delta/G$ that decides the predecessor $\bar{b}$ of $b_n$ at height $\bar{\beta}$. If $\bar{b}\neq a^L$, let $p_{n+1}:=p^L$ and otherwise let $p_{n+1}:=p^R$. Finally, let $\hat{p}$ be the pointwise union of the $p_n$ and let $\hat{q}$ be the pointwise union of the $q_n$. They are both in $ \po_\delta/G$ so they can be assumed to be super-nice with respect to $M^\tau_\alpha$ and $[\hat{p}]_{M^\tau_\alpha}=[\hat{q}]_{M^\tau_\alpha}$. They split every pair in $A\times B$, as desired. Furthermore, $\hat{p}\in M^\delta_{\beta'}$, since $M^\delta_{\beta'}$ is closed for countable sequences.
\end{proof}

\begin{lemma}\label{lem:sup}
    Let $\tau<\kappa^+$. For every $p\in \po_\tau$ that is super-nice with respect to $M^\tau_\alpha$ there is $q\leq p$ that is super-nice with respect to $M^\tau_\alpha$ and satisfies that if $\la (M^{\tau_i}_{\alpha_i},\tau_i):i<\nu\ra$ is the $p'$-closure of $\la(M^\tau_\alpha,\tau)\ra$, then 
    \[
    \sup\{\alpha_i:i<\nu\}=\sup\{\alpha_i:\tau_i\in\tau\cap M^\tau_\alpha, i<\nu\}.
    \]
    Moreover, this supremum can be chosen as high in $\kappa$ as wanted.
\end{lemma}
\begin{proof}
    By adding nodes to coordinates in $M^\tau_\alpha$ using Node density Lemma.
\end{proof}

\begin{definition}
    Let $\delta\leq\tau$. A pair of conditions $(p,q)$ from $\po_\delta$ \emph{nicely splits with respect to $M^\tau_\alpha$} if
    \begin{enumerate}
        \item $\alpha\in N^p_\gamma\cap N^q_\gamma$ for every $\gamma\in\delta\cap M^\tau_\alpha$,
        \item for every $\gamma\in\tau\cap M^\tau_\alpha$ and every pair $(s,s')$ of exit nodes from $M^\gamma_\alpha$ at limit levels where $s\in \dom(f^p_\gamma)$ and $s'\in \dom(f^q_\gamma)$, the conditions $p\rest\gamma$ and $q\rest\gamma$ split the pair $(s,s')$ with a pair $(\bar{s},\bar{s}')$ where $\bar{s}\in\dom(f^p_\gamma)$ and $\bar{s}'\in\dom(f^q_\gamma)$.
        \item for every $\gamma\in\tau\cap M^\tau_\alpha$ and every $s\in\dom(f^p_{\gamma})$ (resp. $s\in\dom(f^q_\gamma)$) exit node from $M^\gamma_\alpha$ at a successor level, the condition $p\rest\gamma$ (resp. $q\rest\gamma$) decides the immediate predecessor $\bar{s}$ of $s$ and $\bar{s}\in\dom(f^p_\gamma)$ (resp. $\bar{s}\in\dom(f^q_\gamma)$).
    \end{enumerate}
\end{definition}

\begin{lemma}\label{lem:split}
    Let $\tau<\kappa^+$, $\alpha<\kappa$ and $p,q \in \po_\tau$.
    \begin{enumerate}
        \item\label{item1:lem:split} If $p$ and $q$ nicely split with respect to $M^\tau_\alpha$ and $[p]_{M^\tau_\alpha} = [q]_{M^\tau_\alpha} $ then there are $\hat{p}\leq p$ and $\hat{q}\leq q$ that are super-nice with respect to $M^\tau_\alpha$ and $[\hat{p}]_{M^\tau_\alpha} = [\hat{q}]_{M^\tau_\alpha} $.
        \item\label{item2:lem:split} If $p$ and $q$ are super-nice with respect to $M^\tau_\alpha$ and $[p]_{M^\tau_\alpha} = [q]_{M^\tau_\alpha} $, then there are $\hat{p}\leq p$ and $\hat{q}\leq q$ that nicely split with respect to $M^\tau_\alpha$ and $[\hat{p}]_{M^\tau_\alpha} = [\hat{q}]_{M^\tau_\alpha} $. Moreover, for any $t\in T$ that is an exit node from $M^\tau_\alpha$ at a limit level, we may choose $\hat{p}$ and $\hat{q}$ such that they also split $t$.
        \item\label{item3:lem:split} $\dot T$ is a wide $\kappa$-Aronszajn tree in $V^{ \po_\tau}$.
    \end{enumerate}
\end{lemma}
\begin{proof}
    The proof is by induction on $\tau$. 

    \begin{remark}
        If Lemma \ref{lem:split} holds for $\tau$, then any pair of conditions $p,q\in\po_\tau$ having the same trace to $M^\tau_\alpha$ satisfies that if either they nicely split or are super-nice with respect to $M^\tau_\alpha$, then there are extensions that still have the same trace to $M^\tau_\alpha$ and both nicely split and are super-nice with respect to $M^\tau_\alpha$. This can be proved by iterating $\omega$ many times items (\ref{item1:lem:split}) and (\ref{item2:lem:split}).
    \end{remark}

    \noindent\textbf{Proof of item $\boldsymbol{\ref{item1:lem:split}}$.}

    \vspace{4pt}
    
    \noindent We look at the successor case $\tau+1$ first. Let $p,q\in\po_{\tau+1}$ be such that they have a common trace to $M^{\tau+1}_\alpha$ and nicely split with respect to it. By induction hypothesis and the above remark we may assume that $p\rest\tau$ and $q\rest\tau$ nicely split and are super-nice with respect to $M^\tau_\alpha$. 

    Let $G\subseteq\po_\tau\cap M^\tau_\alpha$ be a generic that contains the common trace of $p\rest\tau$ and $q\rest\tau$. Then $p\rest\tau,q\rest\tau\in\po_\tau/G$ and the common trace of $f^p_\tau$ and $f^q_\tau$ to $M^\tau_\alpha$ is a meet- and level-preserving tree-embedding from $(\dot S_\tau\cap V_\alpha)^G$ to $(\dot T\cap V_\alpha)^G$. In the quotient, find $p_0,q_0\in\po_\tau/G$ such that
    \begin{itemize}
        \item $p_0\leq p\rest\tau$ and $q_0\leq q\rest\tau$,
        \item $p_0$ is super-nice with respect to every $M^\tau_\beta$ where $\beta\in E^p_\tau$ and $q_0$ is super-nice with respect to every $M^\tau_\beta$ where $\beta\in E^q_\tau$,
        \item $p_0$ decides, for every pair $(s,t)\in f^p_\tau$ of exit nodes from $V_\alpha$, cofinal implicit preimages for the node projection $\dot\pi^{p_0}(t)$, i.e. there are nodes $(\bar{t}^p_n:n<\omega)$ cofinal in $\dot \pi^{p_0}(t)$ and $p_0$ decides, for each $n$, the node $\bar{s}^p_n$ below $s$ at the height of $\bar{t}^p_n$.
        \item same for $q_0$: $q_0$ decides, for every pair $(s,t)\in f^p_\tau$ of exit nodes from $V_\alpha$, cofinal implicit preimages for the node projection $\dot\pi^{q_0}(t)$, i.e. there are nodes $(\bar{t}_n^q:n<\omega)$ cofinal in $\dot \pi^{q_0}(t)$ and $q_0$ decides, for each $n$, the node $\bar{s}^q_n$ below $s$ at the height of $\bar{t}^q_n$.
        \item the conditions $p_0$ and $q_0$ nicely split and are super-nice with respect to $M^\tau_\alpha$.
    \end{itemize}
For every $s\in\dom(f^p_\tau)$ and $s'\in\dom(f^q_\tau)$, look at $\bar{s}$ and $\bar{s}'$ in $V_\alpha$ that witness the splitting, i.e. $p_0\Vdash\bar{s}<s$ and $q_0\Vdash\bar{s}'<s'$. Then the meet of the nodes $\bar{s}$ and $\bar{s}'$ is decided by $G$, so in particular $p_0$ and $q_0$ decide it the same way. Furthermore, for any $n$ and $m$ we have $\bar{s}^p_n\wedge \bar{s}'^q_m=\bar{s}\wedge \bar{s}'$ and moreover, 
\begin{align*}
    &p_0\Vdash s\wedge \bar{s}'^q_m=\bar{s}\wedge \bar{s}',\\
    &q_0\Vdash \bar{s}^p_n\wedge s'=\bar{s}\wedge \bar{s}'.
\end{align*}
We also have that the height of the node 
\[
f^p_\tau(\bar{s})\wedge f^q_\tau(\bar{s}')
\]
in the tree $(\dot T\cap V_\alpha)^G$ is exactly the height of the node $\bar{s}\wedge \bar{s}'$ in the tree $(\dot S_\tau\cap V_\alpha)^G$. This follows from the fact that the trace are the same,
\[
[{p_0}^\smallfrown(f^p_\tau,N^p_\tau)]_{M^{\tau+1}_{\alpha}}=[{p_0}^\smallfrown(f^p_\tau,N^p_\tau)]_{M^{\tau+1}_{\alpha}}
\]
so in particular the nodes $\bar{s}$ and $\bar{s}'$ belong to the $\tau$th embedding of the common trace, call it $f:=f^p_\tau\cap V_\alpha=f^q_\tau\cap V_\alpha$. Thus, since  $f$ is meet-preserving, we have for any $(s,t)\in f^p_\tau$ and $(s',t')\in f^q_\tau$, that
\[
f(\bar{s}\wedge\bar{s}')=\bar{t}^p_m\wedge \bar{t'}^q_m.
\]
Hence, the condition $p_0$ decides meets in the set 
\begin{align*}
    \dom(f^p_\tau)&\cup\{\bar{s}^p_n:n<\omega,s\in\dom(f^p_\tau)\text{ exit node from }V_\alpha\}\\
    &\cup \{\bar{s}'^q_n:n<\omega,s'\in\dom(f^q_\tau)\text{ exit node from }V_\alpha\}
\end{align*} 
and the condition $q_0$ decides meets in the set
\begin{align*}
    \dom(f^q_\tau)&\cup\{\bar{s}^p_n:n<\omega,s\in\dom(f^p_\tau)\text{ exit node from }V_\alpha\}\\
    &\cup \{\bar{s}'^q_n:n<\omega,s'\in\dom(f^q_\tau)\text{ exit node from }V_\alpha\}.
\end{align*}
and the meets are already in the domain of the function $f$.
Hence we may let
\begin{align*}
    \hat{f}^p:=f^p_\tau&\cup\{(\bar{s}^p_n,\bar{t}^p_n):(s,t)\in f^p_\tau\}\\
        &\cup\{(\bar{s}'^q_m,\bar{t}'^q_m):(s',t')\in f^q_\tau\}\\
    \hat{f}^q:=f^q_\tau&\cup\{(\bar{s}^p_n,\bar{t}^p_n):(s,t)\in f^p_\tau\}\\
        &\cup\{(\bar{s}'^q_m,\bar{t}'^q_m):(s',t')\in f^q_\tau\}.
\end{align*}
It follows from above that both $\hat{f}^p$ and $\hat{f}^q$ are injective functions, $p_0$ forces that the domain of the function $\hat{f}^p$ is closed under meets and is level- and meet-preserving, and $q_0$ forces that the function $\hat{f}^q$ is closed under meets and is level- and meet-preserving. It also follows that
\begin{align*}
    &\hat{p}:={p_0}^\smallfrown(\hat{f}^p,N^p_\tau),\\
    &\hat{q}:={q_0}^\smallfrown(\hat{f}^q,N^q_\tau)
\end{align*}
are conditions in $\po_{\tau+1}$ that are super-nice with respect to $M^{\tau+1}_\alpha$ and have the same trace to it.

\vspace{5pt}

We consider the case when $\tau$ is a limit ordinal of countable cofinality. This is straightforward. Let $p,q\in\po_\tau$ nicely split with respect to $M^\tau_\alpha$ satisfying $[p]_{M^\tau_\alpha}=[q]_{M^\tau_\alpha}$. Fix a cofinal sequence $(\tau_j)_{j<\omega}$ converging to $\tau$. Proceed by recursion on $j<\omega$. Let $p_0:=p\rest\tau_0$ and $q_0:=q\rest\tau_0$. At step $j+1$, look at $p_j$ and $q_j$. Suppose that they nicely split and are super-nice with respect to $M^\tau_\alpha$. Then $p':={p_j}^\smallfrown p\rest[\tau_j,\tau_{j+1})$ and $q':={q_j}^\smallfrown q\rest[\tau_j,\tau_{j+1})$ nicely split and satisfy $[p']_{M^\tau_\alpha}=[q']_{M^\tau_\alpha}$. By induction hypothesis there are $p_{j+1}\leq p'$ and $q_{j+1}\leq q'$ that nicely split and are super-nice with respect to $M^\tau_\alpha$ and satisfy $[p_{j+1}]_{M^\tau_\alpha}=[q_{j+1}]_{M^\tau_\alpha}$. Finally the pointwise unions of the $p_j$ and $q_j$, respectively, are as wanted.

The limit case of uncountable cofinality is a straightforward implication of induction hypothesis.

\vspace{5pt}

\noindent\textbf{Proof of item $\boldsymbol{\ref{item2:lem:split}}$.}

\vspace{4pt}
    
\noindent We prove by induction on $\tau$: 
\textit{If $p$ and $q$ are super-nice with respect to $M^\tau_\alpha$ and have the same trace to it, and $t\in T$ is an exit node from $V_\alpha$ at a limit level, then there are $\hat{p}\leq p$ and $\hat{q}\leq q$ that nicely split with respect to $M^\tau_\alpha$, have the same trace to it, and split $t$, and furthermore, for any $\beta$ it holds that if $p\in M^\tau_\beta$, then $\hat{p}\in M^\tau_\beta$.
}

\vspace{5pt}

Suppose that $p$ and $q$ are super-nice with respect to $M^\tau_\alpha$ and have the same trace to it. Let $t\in T$ be an exit node from $M^\tau_\alpha$ at a limit level.
We find $\hat{p}\leq p$ and $\hat{q}\leq q$ that split $t$ and nicely split with respect to $M^\tau_\alpha$, and satisfy $[\hat{p}]_{M^\tau_\alpha}=[\hat{q}]_{M^\tau_\alpha}$, and moreover such that $\hat{p}\in M^\tau_{\beta}$ for any large enough $\beta$ such that $p\in M^\tau_\beta$.

We may assume without loss of generality that for every $\gamma\in\tau\cap M^\tau_\alpha$ and $s\in\dom(f^p_\gamma)$ (resp. $s\in\dom(f^q_\gamma)$) that is an exit node from $M^\tau_\alpha$ at a successor level, the condition $p\rest\gamma$ (resp. $q\rest\gamma$) decides the immediate predecessor $\bar{s}$ of $s$ and $\bar{s}\in\dom(f^p_\gamma)$ (resp. $\bar{s}\in\dom(f^q_\gamma)$). This assumption can be made by fixing a generic $G\subseteq\po_\tau\cap M^\tau_\alpha$ containing the common trace of $p$ and $q$ and extending $p$ and $q$ in the quotient $\po_\tau/G$ using Node density. Similarly, if $\la(M^{\tau_i}_{\alpha_i},\tau_i):i<\nu\ra$ is the $p$-closure  of $M^\tau_\alpha$, up to extending $p$ in the quotient we may assume that 
\[
\sup\{\alpha_i:i<\nu\}=\sup\{\alpha_i:i<\nu\text{ and }\tau_i\in M^\tau_\alpha\},
\] and moreover, that $t\in V_{\alpha_i}$ for some $i<\nu$. See Lemma \ref{lem:sup}.

Fix thus such $p$ and $q$ and let $\la(M^{\tau_i}_{\alpha_i},\tau_i):i<\nu\ra$ be the $p$-closure of $M^\tau_\alpha$.
Denote $\tau_\nu:=\tau$ and $\alpha_\nu:=\kappa$.
By recursion on $j\leq\nu$, we define conditions $p_j\in \po_{\tau_j}\cap M^{\tau_j}_{\alpha_j}$ and conditions $q_j\in\po_\tau$. The final conditions $p_\nu$ and $q_\nu$ will nicely split with repect to $M^\tau_\alpha$, split $t$, and satisfy $[p_\nu]_{M^\tau_\alpha}=[q_\nu]_{M^\tau_\alpha}$.
Let $p_0:=[p]_{M^\tau_\alpha}$ and $q_0:=q$. 

\vspace{5pt}

\noindent\textbf{At step $\boldsymbol{j}$:} Assume that we have defined $p_i$ and $q_i$ for $i<j$ and they satisfy:
\begin{enumerate}
    \item $(p_i\rest\min\{\tau_i,\tau_j\}:i<j)$ forms a $[p\rest\tau_j]_{M^{\tau_j}_{\alpha_j}}$-M.E.,
    \item $(q_i:i<j)$ is a decreasing sequence of conditions in $\po_\tau$,
    \item $p_i$ and $q_i\rest\tau_i$ nicely split and are super nice with respect to $M^{\tau}_\alpha$ and satisfy $[p_i]_{M^\tau_\alpha}=[q_i\rest\tau_i]_{M^\tau_\alpha}$,
    \item If $\tau_i\in\tau\cap M^\tau_\alpha$ and $t\in V_{\alpha_i}$, then $p_i$ and $q_i\rest\tau_i$ split $t$,
    \item If $\tau_i\in\tau\cap M^\tau_\alpha$, then for every pair $(s,s')\in A_i\times B_i$, where 
    \begin{align*}
        &A_i=\{s\in\dom(f^p_{\tau_i}):s\text{ is an exit node from }V_\alpha\}-(V_{\alpha_i}-\bigcup_{i'<i}V_{\alpha_{i'}}),\\
        &B_i=\{s\in\dom(f^{q_i}_{\tau_i}):s\text{ is an exit node from }V_\alpha\},
    \end{align*}
    $p_i$ and $q_i\rest\tau_i$ split the pair $(s,s')$ with a pair $(\bar{s},\bar{s}')$. Also, the common trace $[p_i]_{M^\tau_\alpha}=[q_i\rest\tau_i]_{M^\tau_\alpha}$ decides the meet $\bar{s}\wedge\bar{s}'$ and $p_i$ forces  
    \[
    s\wedge \bar{s}'=\bar{s}\wedge\bar{s}'.
    \]
    Furthermore, $\bar{s},\bar{s}'\in\dom(f^{q_i}_{\tau_i})$, and if $i'\in[i+1,j)$ is such that $f^p_{\tau_i}(s)\in V_{\alpha_{i'}}$, then $p_{i'}$ forces that
    \[
    f^{q_i}_{\tau_i}(\bar{s})<_{\dot T}f^p_{\tau_i}(s).
    \]
    
    \item The extension $q_i\leq\bigcup_{i'<i}q_{i'}$ is minimal in the sense that we only extended the part of $q_i$ up to $\tau_i$ and the function $f^{q_i}_{\tau_i}$ below $\alpha$ by adding witnesses for splitting. Specifically: 
    \begin{itemize}
        \item ${q_i}\rest [\tau_i+1,\tau)=\bigcup_{i'<i}q_{i'}\rest[\tau_{i}+1,\tau)$,
        \item $N^{q_i}_{\tau_i}=\bigcup_{i'<i}N^{q_{i'}}_{\tau_i}$,
        \item $f^{q_i}_{\tau_i}-V_\alpha=\bigcup_{i'<i}f^{q_{i'}}_{\tau_i}-V_\alpha$,
        \item $q_i\rest\tau_i\Vdash \dom(f^{q_i}_{\tau_i})$ is the least set closed under meets and exit nodes from models in $\bigcup_{i'<i}N^{q_{i'}}_{\tau_{i'}}$ that contains $\quad \bigcup_{i'<i}f^{q_{i'}}_{\tau_{i'}}$ and $\{\bar{s},\bar{s}':(s,s')\in A_i\times B_i\}$.
    \end{itemize}
\end{enumerate}
We define $p_j$ and $q_j$.

\begin{claim}\label{claim:tilde:split}
    There is $\tilde p\leq [p\rest\tau_j]_{M^{\tau_j}_{\alpha_j}}$ in $\po_{\tau_j}\cap M^{\tau_j}_{\alpha_j}$ such that $\tilde p$ is super-nice with respect to $M_{\alpha_i}^{\tau_i}$ and $[\tilde p\rest\min\{\tau_j,\tau_{i}\}]_{M^{\tau_{i}}_{\alpha_{i}}}=[p\rest\min\{\tau_j,\tau_{i}\}]_{M^{\tau_{i}}_{\alpha_{i}}}$ for cofinally many $i<j$, and such that for any $i<j$ and any $(s,s')\in A_i\times B_i$, if $f^p_{\tau_i}(s)\in V_{\alpha_j}$, then
    \[
    \tilde p\Vdash f^{q_i}_{\tau_i}(\bar{s})<_{\dot T}f^p_{\tau_i}(s).
    \]
\end{claim}
\begin{proof}
    By Lemma \ref{lem:freem}.
\end{proof}

Let $\tilde p$ be as in Claim \ref{claim:tilde:split}. 

\begin{claim}\label{claim:70}
    There are conditions $p_j\in\po_{\tau_j}\cap M^{\tau_j}_{\alpha_j}$ and $q_j\in\po_\tau$ such that:
    \begin{enumerate}
        \item $p_j\leq \tilde p$ and $p_j\leq p_i\rest\tau_j$ for every $i<j$,
        \item $q_j$ extends every $q_i$, $i<j$, and agrees with their pointwise union after $\tau_j$, i.e. $q_j\rest[\tau_j,\tau)=\bigcup_{i<j}q_i\rest[\tau_j,\tau)$,
        \item $p_j$ and $q_j\rest\tau_j$ nicely split with respect to $M^\tau_\alpha$ and $[p_j]_{M^\tau_\alpha}=[q_j\rest\tau_j]_{M^\tau_\alpha}$.
    \end{enumerate}
\end{claim}
\begin{proof}[Proof of Claim \ref{claim:70}]
    
    Suppose first that $\tau_j<\lim\sup_{i<j}\tau_i$. Let $q_j$ be the pointwise union of the $q_i$, $i<j$. Let $G\subseteq\po_{\tau_j}\cap M^\tau_\alpha$ be a generic filter that contains the common trace $[p_i\rest\tau_j]_{M^\tau_\alpha}=[q_i\rest\tau_j]_{M^\tau_\alpha}$ for every $i<j$. Then $q_j$ is in the quotient $\po_{\tau_j}/G$, and so is every $p_i\rest\tau_j$. By strong properness for $\tau_j$-sequences in the quotient, Lemma \ref{lem:tau-sq:quo}, there is a condition $p_j\in(\po_{\tau_j}\cap M^{\tau_j}_{\alpha_j})/G$ that extends $\tilde p$ and each $p_i\rest\tau_j$, $i<j$. Up to extending $q_j\rest\tau_j$ and $p_j$ in the quotient $\po_{\tau_j}/G$, we may assume that they are super-nice with respect to $M^\tau_\alpha$ and have the same trace to it, $[p_j]_{M^\tau_\alpha}=[q_j\rest\tau_j]_{M^\tau_\alpha}$. By induction hypothesis for $\tau_j$ we may assume further that they nicely split with respect to $M^\tau_\alpha$. Then $p_j$ and $q_j$ are as wanted.

    Suppose then that $\tau_j\geq \lim\sup_{i<j}\tau_i$. Again, let $q_j$ be the pointwise union of the $q_i$, $i<j$. Let $p_j$ be the condition obtained by taking the pointwise union of the conditions $p_i\rest\tau_j$, $i<j$ and $[p\rest\tau_j]_{M^{\tau_j}_{\alpha_j}}$, and furthermore, whenever $\gamma\in\tau_j\cap M^\tau_\alpha\cap M^{\tau_j}_{\alpha_j}$, then the $\gamma$-th coordinate is
    \[
    f_\gamma:=(f^{q_i}_\gamma\cap V_\alpha)\cup \bigcup_{i<j}\bigcup_{\gamma\in\supp(p_i)}f^{p_i}_\gamma\cup (f^p_\gamma\cap V_{\alpha_j}).
    \]
    It follows by construction, as in the proof of strong properness (at step $q^{\eta}$ of proof of Lemma \ref{Lemma:PropernessForSequences}), that $p_j$ is a condition in $\po_{\tau_j}\cap M^{\tau_j}_{\alpha_j}$, and it follows by construction that $p_j$ and $q_j\rest\tau_j$ nicely split and $[p_j]_{M^\tau_\alpha}=[q_j]_{M^\tau_\alpha}$. 
    
\end{proof}

Let $p_j$ and $q_j$ be as in Claim \ref{claim:70}. Up to extending $p_j$ inside $\po_{\tau_j}\cap M^{\tau_j}_{\alpha_j}$, we may assume that $(p_i\rest\min\{\tau_i,\tau_{j+1}\}:i< j+1)$ forms a $[p\rest\tau_{j+1}]_{M^{\tau_{j+1}}_{\alpha_{j+1}}}$-Multi-Extension. 

\vspace{5pt}

If $j=\nu$, we are done. 

If $\tau_j\notin M^\tau_\alpha$, we keep $p_j$ and $q_j$ as they are. 

If $j<\nu$ and $\tau_j\in M^\tau_\alpha$, we make two more extensions to $p_j$ and $q_j$.

\vspace{5pt}

Since $j<\nu$, we have $\tau_j<\tau$. Thus by induction hypothesis, we may assume that $p_j$ and $q_j$ are super-nice with respect to $M^\tau_\alpha$, in addition to nicely splitting and having the same trace to $M^\tau_\alpha$.

Let $G\subseteq\po_{\tau_j}\cap M^\tau_\alpha$ be a generic filter that contains the common trace of $p_j$ and $[q_j\rest\tau_j]_{M^\tau_\alpha}$. Then $p_j,q_j\rest\tau_j\in\po_{\tau_j}/G$. 

\begin{claim}\label{claim:splitt}
    If $t\in V_{\alpha_j}$, then there are extensions of $p_j$ and $q_j\rest\tau_j$ that split $t$ in $\po_{\tau_j}/G$ such that $p_j\in M^{\tau_j}_{\alpha_j}$.
\end{claim}
\begin{proof}[Proof of Claim \ref{claim:splitt}]
    By assumption $\tau_j<\tau$, so by induction hypothesis the tree $\dot T$ is a wide $\kappa$-Aronszajn tree in $V^{\po_{\tau_j}}$. As in Lemma \ref{lem:split1step}, we argue that the branch below $t$ cannot be introduced by $\po_{\tau_j}\cap M^{\tau}_\alpha$: it follows from the case assumption that $\po_{\tau_j}\in M^\tau_\alpha$, so in particular $M^\tau_\alpha$ reflects the fact that $\Vdash_{\po_{\tau_j}}$ $\dot T$ is wide $\kappa$-Aronszajn tree.
    Hence the branch below $t$ cannot be introduced by $\po_{\tau_j}\cap M^\tau_\alpha$. There are two extensions $p^L,p^R\leq p_j$ in $\po_{\tau_j}/G$ that split $t$ at some level $\bar{\alpha}<\alpha$ with some nodes $t^L\neq t^R$. Extend $q'\leq q_j\rest\tau_j$ in $\po_{\tau_j}/G$ to decide the predecessor of $t$ at level $\bar{\alpha}$, call it $\bar{t}$. If $\bar{t}\neq t^L$, then $p^L$ and ${q'}^\smallfrown q_j\rest[\tau_j,\tau)$ are as wanted and otherwise $\bar{t}\neq t^R$ and $p^R$ and ${q'}^\smallfrown q_j\rest[\tau_j,\tau)$ are as wanted.
\end{proof}

By Claim \ref{claim:splitt}, we assume that $p_j$ and $q_j\rest\tau_j$ split $t$ in case $t\in V_{\alpha_j}$. We make a last extension inside the quotient $\po_{\tau_j}/G$. Let
\begin{align*}
    &A_j=\{s\in\dom(f^p_{\tau_j}):s\text{ is an exit node from }V_\alpha\}-(V_{\alpha_j}-\bigcup_{i<j}V_{\alpha_{i}}),\\
    &B_j=\{s\in\dom(f^{q_j}_{\tau_j}):s\text{ is an exit node from }V_\alpha\}.
\end{align*}
Up to extending $p_j$ and $q_j\rest\tau_j$ in $(\po_{\tau_j}\cap M^{\tau_j}_{\alpha_j})/G$ and $\po_{\tau_j}/G$, respectively, we may assume that they split every pair $(s,s')\in A_j\times B_j$ with some pair $(\bar{s},\bar{s}')$ of distinct nodes of same height. For every $i<j$ such that $\tau_j\in \supp(p_i)$, consider the function $f^{p_i}_{\tau_j}$. We have
\[
f^{p_i}_{\tau_j}\cap V_\alpha=f^{q_i}_{\tau_j}\cap V_\alpha.
\]
Let
\[
f:=\bigcup_{i<j,\tau_j\in\supp(p_i)}f^{p_i}_{\tau_j}\cap V_\alpha.
\]
It follows that $f=f^{q_j}_{\tau_j}\cap V_\alpha$, and further, that $f$ is a level- and meet-preserving tree-embedding
\[
f:(\dot S_{\tau_j}\cap V_\alpha)^G\to(\dot T\cap V_\alpha)^G.
\]
Note that $\tau_j+1\leq\tau$ and $\tau_{j+1}\in M^\tau_\alpha$. By Node density Lemma applied in the poset $(\po_{\tau_j+1}\cap M^\tau_\alpha)/G$, there is a function $f'\supseteq f$ whose domain is the closure under meets of the set
\[
\dom(f)\cup\{\bar{s},\bar{s}':(s,s')\in A_j\times B_j\},
\]
and which is level- and meet-preserving tree-embedding from $(\dot S_{\tau_j}\cap V_\alpha)^G$ to $(\dot T\cap V_\alpha)^G$.

Note also that if $(s,s')\in A_j\times B_j$, then , the meet $\bar{s}\wedge \bar{s}'$ is decided by the generic $G$, since $\bar{s},\bar{s}'\in(\dot S_{\tau_j}\cap V_\alpha)^G$, and furthermore 
\begin{align*}
    p_j&\Vdash s\wedge \bar{s}'=\bar{s}\wedge \bar{s}',\\
    q_j\rest\tau_j&\Vdash \bar{s}\wedge s'=\bar{s}\wedge \bar{s}'.
\end{align*}
This implies in particular, that if $s\neq s'$, then $p_j$ forces that $\bar{s}'$ is not below $s$. And further, that $p_j$ decides the meets in the set
\[
\dom(f')\cup(\dom(f^p_{\tau_j})\cap V_{\alpha_j})\cup\bigcup_{i<j,\tau_j\in\supp(p_i)}f^{p_i}_{\tau_j}.
\]
Recall that if $s\in A_j$, then $f^p_{\tau_j}(s)\notin M^{\tau_j}_{\alpha_j}$, so in particular we are free to extend the collapse coordinate of $p$ to force $f'(\bar{s}\wedge \bar{s}')<f^p_{\tau_j}(s)$ and $f'(\bar{s}')\not<f^p_{\tau_j}(s)$ at the least step $j'>j$ when it happens that $f^p_{\tau_j}(s)\in V_{\alpha_{j'}}$.

Up to extending $p_j$ and $q_j\rest\tau_j$ in $(\po_{\tau_j}/G)\cap M^{\tau_j}_{\alpha_j}$ and $\po_{\tau_j}/G$, respectively, we may assume that the common trace $[p_j]_{M^\tau_\alpha}=[q_j\rest\tau_j]_{M^\tau_\alpha}$ decides meets in $\dom(f')$ and forces that $f'$ is level- and meet-preserving tree-embedding from $\dot S_{\tau_j}$ to $\dot T$. Then, up to extending $q_j$ at the coordinate $\tau_j$, we may assume that
\[
f^{q_{j}}_{\tau_j}=f'\cup\bigcup_{i<j}f^{q_i}_{\tau_j}.
\]
Then $p_j$ and $q_j$ are as wanted. This ends item (\ref{item2:lem:split}).

\vspace{5pt}

\noindent\textbf{Proof of item $\boldsymbol{\ref{item3:lem:split}}$.}

\vspace{4pt}
    
\noindent Assume that $\po_\tau$ introduces a cofinal branch to $\dot T$. Fix a $\po_\tau$-name $\name{b}$ and a condition $p$ such that $p\Vdash\name{b}$ is a cofinal branch in $\dot T$. Find $\alpha$ such that $p,\name{b}\in M^\tau_\alpha$ and $q\leq p$ which is super-nice with respect to $M^\tau_\alpha$. Up to extending $q$, assume that it forces $\name{b}(\alpha)=t$ for some node $t\in T$. By item \ref{item2:lem:split} applied to the pair $q_1:=q_2:=q$, there are $\hat{q}_1\leq q_1$ and $\hat{q}_2\leq q_2$ that are super-nice with respect to $M^\tau_\alpha$, have the same trace $[\hat{q}_1]_{M^\tau_\alpha}=[\hat{q}_2]_{M^\tau_\alpha}$, and split $t$ with some distinct nodes $t^L$ and $t^R$ at some level $\bar{\alpha}<\alpha$. The the common trace $[\hat{q}_1]_{M^\tau_\alpha}=[\hat{q}_2]_{M^\tau_\alpha}$ is a common residue for $\hat{q}_1$ and $\hat{q}_2$. Let $w\in\po_\tau\cap M^\tau_\alpha$ extend the common trace to decide the $\bar{\alpha}$-th node on the branch, say it forces $\name{b}(\bar{\alpha})=\bar{t}$. If $\bar{t}\neq t^L$, then $w$ cannot be compatible with $\hat{q}_1$ and if $\bar{t}\neq t^R$, then $w$ cannot be compatible with $\hat{q}_2$. This is in contradiction with the fact that the common trace was a common residue for $\hat{q}_1$ and $\hat{q}_2$.

\end{proof}

\begin{theorem}\label{nocofinal}
Suppose that the book-keeping function $\Psi$ is such that for each $\tau<\kappa^+$, the name $S_\tau$ is a $\po_\tau$-name for q (wide) $\kappa$-Aronszajn tree. Then
no cofinal branches are added to the wide tree $T$ by $\po_{\kappa^+}$.
\end{theorem}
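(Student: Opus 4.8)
The plan is to argue by contradiction: suppose some condition $p \in \po_\tau$ forces that $\name{b}$ is a cofinal branch of $T$ (of length $\kappa$) that does not lie in the ground model $V[G(\po_0)]$. Using $\sigma$-closure we may refine $p$ and assume it already decides an initial segment of $\name{b}$ of any desired countable length, and by the $\kappa^+$-c.c. together with the fact that $|\po_\tau| = \kappa$ the branch name can be taken inside a suitable structure. First I would fix a large regular $\theta$, well-order $H_\theta$, and pick the reflecting sequence so that we have abundantly many $\vec{M}^\tau$-reflective structures $M^\tau_\alpha$ with $\name{b}, p, \po_\tau \in M^\tau_\alpha$ and $M^\tau_\alpha \cap \kappa = \alpha$ a (regular) cardinal; by Lemma~\ref{Lemma:DensityMainLemma} we may extend $p$ into $D_\tau(M^\tau_\alpha) \cap D_\tau(M^\tau_\beta)$ for two such structures $\alpha < \beta$, both with amenability so that $[p]_{M^\tau_\alpha}$, $[p]_{M^\tau_\beta}$ are genuine residues (Proposition~\ref{Proposition:MainStrongProper}).

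The heart of the argument is a standard ``two generic extensions disagree'' tree argument, but run through the pairwise strong properness machinery rather than plain properness. Working below $[p]_{M^\tau_\beta}$ inside $M^\tau_\beta$, I would build two conditions $q^L, q^R \le [p]_{M^\tau_\beta}$ in $D_\tau(M^\tau_\alpha) \cap M^\tau_\beta$ with $[q^L]_{M^\tau_\alpha} = [q^R]_{M^\tau_\alpha}$ that force, respectively, $\name{b}(\bar\alpha) = t^L$ and $\name{b}(\bar\alpha) = t^R$ for two \emph{distinct} nodes $t^L \ne t^R$ at some common level $\bar\alpha < \alpha$ of $T$ --- such a splitting of the branch name exists because $\name{b} \notin V[G(\po_0)]$, hence $\name{b}$ is not decided by the collapse part alone, so below $[p]_{M^\tau_\beta}$ there are incompatible extensions forcing different values of $\name{b}$ at some level, and by genericity of $M^\tau_\beta$ such a pair can be found inside $M^\tau_\beta$; the extra requirement $[q^L]_{M^\tau_\alpha} = [q^R]_{M^\tau_\alpha}$ is arranged by first choosing the splitting below $[p]_{M^\tau_\alpha}$-equivalent traces, using that $\po_\tau \cap M^\tau_\alpha$ is a regular subforcing. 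Then I would apply Lemma~\ref{Lem:DoubleStrongProperness} (with $D^L, D^R$ dense sets deciding longer and longer initial segments of $\name{b}$) and, more substantially, Lemma~\ref{Lemma:DualPropernessForSequences}, to obtain $p^L, p^R \le p$ with $p^L \le q^L$, $p^R \le q^R$, and crucially $[p^L]_{M^\tau_\alpha} = [p^R]_{M^\tau_\alpha}$. Now $[p^L]_{M^\tau_\alpha} = [p^R]_{M^\tau_\alpha}$ is, by Proposition~\ref{Proposition:MainStrongProper}, a residue that is compatible with both $p^L$ and $p^R$, and it lies in $M^\tau_\alpha$; but $p^L$ forces $\name{b}(\bar\alpha) = t^L$ while $p^R$ forces $\name{b}(\bar\alpha) = t^R$ with $t^L \ne t^R$. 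Running this construction $\omega_1$ times and dovetailing --- at each level $\bar\alpha$ below $\alpha$ picking a splitting pair, maintaining the $[\cdot]_{M^\tau_\alpha}$-equality, and taking limits using $\sigma$-closure of the quotient $\po_\tau/G_\alpha$ (Lemma~\ref{lemma:PropernessOfQuotients}) --- produces a ``tree of conditions'' of height $\omega_1$ all sharing the same $M^\tau_\alpha$-trace: since there are only $|\po_\tau \cap M^\tau_\alpha| \le |\alpha|^{\aleph_0}$-many possible traces but $\alpha$ is (forced to be, or can be taken to be) a cardinal with $\alpha^{\aleph_0} \ge \alpha$, one instead gets the contradiction directly at the splitting step: $2^{\aleph_0}$ or more incompatible extensions below a single condition $[p^L]_{M^\tau_\alpha}$ inside $M^\tau_\alpha$, contradicting that $[p^L]_{M^\tau_\alpha}$ is a condition.

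Let me sharpen the contradiction so it does not depend on cardinal arithmetic subtleties. The cleanest route is: build a single splitting, get $p^L, p^R \le p$ with $[p^L]_{M^\tau_\alpha} = [p^R]_{M^\tau_\alpha} =: w \in M^\tau_\alpha \cap \po_\tau$, where $p^L \Vdash \name{b}(\bar\alpha) = t^L$ and $p^R \Vdash \name{b}(\bar\alpha) = t^R$, $t^L \ne t^R$. By Proposition~\ref{Proposition:MainStrongProper}, any $w' \le w$ in $M^\tau_\alpha$ is compatible with both $p^L$ and $p^R$ --- wait, that alone is not yet a contradiction. The actual contradiction needs the full splitting tree: inside the generic extension $V[G_\alpha]$ by $\po_\tau \cap M^\tau_\alpha$ with $w \in G_\alpha$, one forms via iterated use of Lemma~\ref{Lemma:DualPropernessForSequences} and Lemma~\ref{lemma:PropernessOfQuotients} a perfect tree of conditions in $\po_\tau / G_\alpha \cap M^\tau_\beta[G_\alpha]$, all forcing incompatible values for $\name{b}$ at levels below $\alpha$ but each compatible (in $\po_\tau/G_\alpha$) with $p$; reading off $\omega_1$-branches through this tree gives $2^{\aleph_1}$ distinct cofinal branches of $T\cap M^\tau_\alpha$ (equivalently $2^{\aleph_1}$ distinct elements of $(T^{h_\gamma}\cap M^\tau_\alpha)$-level-$\alpha$ in various forcing extensions), which already exceeds $|M^\tau_\alpha| = |\alpha| < \kappa$ possibilities available in any single extension --- in fact it contradicts that $\name{b}\uhr \alpha$ is forced by $p$ to be a branch through the single tree $T_{<\alpha}$, whose level-$\bar\alpha$ slices have size $<\kappa$. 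I expect the main obstacle to be the bookkeeping in the $\omega_1$-length dovetailing: one must run Lemma~\ref{Lemma:DualPropernessForSequences} along a growing $\tau$-sequence of structures while keeping all the $M.E.$-compatibility and the $[\cdot]_{M^\tau_\alpha}$-twinning invariants simultaneously, and verifying that limits stay in the super-nice dense sets $D_\tau(\cdot)$ --- this is exactly where the $\sigma$-closure of $D_\tau(M^\tau_\alpha)$ and of the quotients (Lemmas~\ref{Lemma:DensityMainLemma} and \ref{lemma:PropernessOfQuotients}) carry the weight, and where the "$b_{\bar t}$ has cofinal preimages" clause in the definition of nicely-projecting conditions is essential for the limit step to go through.
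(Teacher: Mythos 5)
You start from the right global skeleton---split the branch name into two extensions $p^L,p^R$ with equal $M^\tau_\alpha$-traces but incompatible decisions about $\name{b}$ at a level $\bar\alpha<\alpha$, then use the residue property to get a $w'\in M^\tau_\alpha$ that decides $\name{b}(\bar\alpha)$ and is compatible with both---but then you abandon it (``wait, that alone is not yet a contradiction''), which is a mistake: since $\name{b}\in M^\tau_\alpha$ (the paper takes $\name{b}$ to be the $<_{H_{\kappa^{++}}}$-least name for a cofinal branch, so it sits in every $M^\tau_\alpha$) and $\po_\tau\cap M^\tau_\alpha$ is a regular subforcing below $p$, there is $w'\leq w=[p^L]_{M^\tau_\alpha}$ in $M^\tau_\alpha$ deciding $\name{b}(\bar\alpha)=\bar t$, and compatibility of $w'$ with both $p^L$ and $p^R$ forces $\bar t=t^L$ and $\bar t=t^R$, which is already absurd. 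No $\omega_1$-length dovetailing, perfect tree of conditions, or cardinality count is needed, and the machinery you invoke there (iterating Lemma~\ref{Lemma:DualPropernessForSequences} and counting traces) is not clearly well-defined in the way you sketch it.

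The genuine gap is earlier and more serious: you never justify that a split $p^L,p^R$ with $[p^L]_{M^\tau_\alpha}=[p^R]_{M^\tau_\alpha}$ forcing $\name{b}(\bar\alpha)$ to two distinct values exists. Your justification is ``$\name{b}\notin V[G(\po_0)]$, hence $\name{b}$ is not decided by the collapse part alone,'' but this is neither granted (there is no reason the putative new branch avoids $V[G(\po_0)]$) nor sufficient: what must be avoided is $\name{b}(\bar\alpha)$ being decided by $G\cap M^\tau_\alpha$, and a name in $M^\tau_\alpha$ for an element at level $\bar\alpha<\alpha$ could very well be so decided. The paper handles this with a two-case analysis on where $t=\name{b}(\alpha)$ comes from, and \emph{this is exactly where the hypothesis that each $S_\tau$ is Aronszajn enters}---your proposal never uses that hypothesis, which is itself a red flag. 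In Case 1 ($t$ is not in the range of any $f^p_\delta$ with $\delta\in M^\tau_\alpha$), the collapse index of $t$ is disjoint from everything the trace sees, so one can perturb the collapse coordinate controlling $b_t$ to get $\pi_{f^L_0}(t)=t^L\neq t^R=\pi_{f^R_0}(t)$ without touching $[p]_{M^\tau_\alpha}$. In Case 2 ($t=f^p_\delta(s)$ with $\delta\in M^\tau_\alpha$), that perturbation is not available because $t$ is tied to $s$; instead one uses that $S_\delta$ is Aronszajn and that $M^\tau_\alpha$ is $\Pi^1_1$-reflecting, so $S_\delta\cap V_\alpha$ is Aronszajn over $V[G^\delta_\alpha]$, hence $b_s$ is added by the quotient forcing and can be split inside $M^\delta_{\alpha'}=M^s_1$ into two nodes $s^L\neq s^R$ at a common level, which via Lemma~\ref{Lemma:DualPropernessForSequences} (the piece of machinery you correctly identify) yields the required $p^L,p^R$. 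Without the case split your proof cannot get off the ground, and without Case 2 the Aronszajn hypothesis does no work, so a putative proof along your lines would also ``prove'' the false statement obtained by dropping it.
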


\begin{proof}
By Lemma \ref{lem:split} item (\ref{item3:lem:split}) and $\kappa^+$-cc of $\po_{\kappa^+}$.
\end{proof}

\begin{theorem}\label{Thm:MainAleph_2}
Suppose that the book-keeping function $\Psi$ which picks the trees $S_\tau$, picks only names for wide $\kappa$-Aronszajn trees and covers all $\po_\tau$-names for wide $\kappa$-Aronszajn trees, for all $\tau < \kappa^+$.
Let $G \subseteq \po_{\kappa^+}$ be a generic filter. In $V[G]$, $\kappa = \aleph_2$, $T$ is a wide $\aleph_2$-Aronszajn tree on $\kappa$ which embeds all wide $\aleph_2$-Aronszajn trees.
\end{theorem}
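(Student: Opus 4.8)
The plan is to assemble the theorem from the pieces already established. First I would invoke Theorem~\ref{Theorem-Main-Sec1}: since $\po_{\kappa^+} = \bigcup_{\tau<\kappa^+}\po_\tau$ is $\kappa^+$.c.c.\ and $\sigma$-closed, it preserves $\omega_1$ and $\kappa^+$; since it absorbs $\col(\omega_1,<\kappa)$ it collapses every cardinal strictly between $\omega_1$ and $\kappa$; and since each $\po_\tau$ is strongly proper for the structures $M^\tau_\alpha$ of size $\aleph_1$ (Proposition~\ref{Proposition:MainStrongProper}), a standard argument shows $\po_{\kappa^+}$ does not collapse $\kappa$. Hence in $V[G]$ we have $\kappa=\aleph_2$. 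Next, the tree $T=\bigcup_h T^h$ was built in $V[G_0]$, $G_0\subseteq\col(\omega_1,<\kappa)$, with levels of size $\kappa=\aleph_2$ and height $\kappa$, so $T$ is a wide tree on $\aleph_2$ in $V[G]$; it remains to check it has no cofinal branch and that every wide $\aleph_2$-Aronszajn tree of $V[G]$ embeds into it.

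For the embedding property I would argue as follows. Let $S\in V[G]$ be a wide $\aleph_2$-Aronszajn tree; since $\po_{\kappa^+}$ is $\kappa^+$.c.c.\ and $|S|=\kappa$, $S$ has a $\po_\tau$-name for some $\tau<\kappa^+$, and by $\kappa^+$.c.c.\ together with $\sigma$-closedness of the tail, the statement ``$\name S$ is a wide $\kappa$-Aronszajn tree with domain $\kappa\times\kappa$'' is forced by $\po_\tau$ (up to re-coordinatizing the domain, which does not affect embeddability). By the assumption on the book-keeping function $\Psi$, there is some $\tau'\geq\tau$ with $\Psi(\po_{\tau'})=\name S$, and then by clause (6) of Theorem~\ref{Theorem-Main-Sec1} the poset $\po_{\tau'+1}$ adds a monomorphic embedding $f_{\tau'}\colon S\to T^{h_{\tau'}}\subseteq T$; this embedding survives into $V[G]$. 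Hence every wide $\aleph_2$-Aronszajn tree of $V[G]$ embeds into $T$.

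For the no-new-branches property I would observe that for each $\tau<\kappa^+$, Theorem~\ref{nocofinal} (applicable because $\Psi$ only ever picks names for wide $\kappa$-Aronszajn trees) says $\po_\tau$ adds no cofinal branch to $T$. To pass to the full iteration, suppose for contradiction that $\name b$ names a cofinal branch added by $\po_{\kappa^+}$; by $\kappa^+$.c.c., every initial segment $\name b\cap \Lev_\alpha(T)$ is decided by some $\po_{\tau_\alpha}$, and since $T$ has height $\kappa<\kappa^+$ and $\cof(\kappa^+)>\kappa$, there is a single $\tau^*<\kappa^+$ deciding all of $\name b$; thus $\name b$ is a $\po_{\tau^*}$-name, contradicting Theorem~\ref{nocofinal} at $\tau^*$. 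Therefore $T$ has no cofinal branch in $V[G]$, i.e.\ $T$ is a wide $\aleph_2$-Aronszajn tree. Combining the two properties, $T$ is a maximal wide $\aleph_2$-Aronszajn tree in $V[G]$, which is the assertion of the theorem. The main obstacle in writing this out carefully is the reflection argument that pins $\name b$ to a single $\po_{\tau^*}$ — one must use that a cofinal branch of $T$ is a cofinal subset of $\kappa$ and that $\po_{\kappa^+}$ is a $\kappa^+$.c.c.\ increasing union, so that names for subsets of $\kappa$ of size $\kappa$ appear at some stage $<\kappa^+$; everything else is bookkeeping over results already proved.
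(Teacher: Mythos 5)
Your proposal is correct and follows the same route as the paper: the embedding property comes from Theorem~\ref{Theorem-Main-Sec1}(6) plus the bookkeeping, and Aronszajnness comes from Theorem~\ref{nocofinal}. The only thing you spell out that the paper leaves implicit is the reduction of a putative $\po_{\kappa^+}$-name $\name b$ for a cofinal branch of $T$ to a $\po_{\tau^*}$-name for some $\tau^*<\kappa^+$ via the $\kappa^+$.c.c.\ and $\cof(\kappa^+)>\kappa$; that step is correct and is exactly the justification the paper compresses into ``hence remains $\aleph_2$-Aronszajn.''
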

\begin{proof}
It follows from the proof \ref{Theorem-Main-Sec1} that $T$ embeds all wide $\aleph_2$-Aronszajn trees on $\kappa$. By the last theorem $T$ does not get a cofinal branch, and hence remains $\aleph_2$-Aronszajn. 
\end{proof}

\section{Open Problems}\label{Section:OpenProblems}

The following problems  are left open by this work: 
\begin{enumerate}
\item Is the weakly compact cardinal needed for Theorem \ref{Theorem:main}? \\
We conjecture that the answer is yes.

\item Is it consistent to have a universal (narrow) Aronszajn tree?\\ 
For $\aleph_2$-Aronszajn trees, we expect this to be possible by incorporating ideas from Mitchell's construction of a model without Aronszajn trees on $\omega_2$ and work towards verifying the details. The case of  $\aleph_1$-Aronszajn trees remains unclear. 


\item Can the universality result for wide Aronszajn trees hold at successors of singular cardinals? \\
A positive answer would likely require  developing new methods for iteration at successors of singular cardinals. 

\item Can one consistently have  a maximal wide $\aleph_2$-Aronszajn and, at the same time,  a maximal wide $\aleph_3$-Aronszajn tree?
\end{enumerate}

\end{document}